\newtheorem{theorem}{Theorem}
\newtheorem{lemma}{Lemma}
\newcommand{\absx}[1]{\left \lvert {#1} \right \rvert}
\newcommand{\halfspace}{\mathcal{H}}
\newcommand{\ellipsoid}{\mathcal{E}}
\newcommand{\ball}{\mathcal{B}}
\newcommand{\inscribedball}{\overline{\mathcal{B}}}
\newcommand{\circumscribedball}{\underline{\mathcal{B}}}
\newcommand{\circumscribedellipsoid}{\underline{\mathcal{E}}}
\newcommand{\cone}{\mathcal{F}}
\newcommand{\standarddev}{\sigma}
\newcommand{\smoothness}{\beta}
\newcommand{\lipschitz}{L}
\newcommand{\isotrans}[2]{T_{#1,#2}}
\newcommand{\isotransinverse}[2]{T^{-1}_{#1,#2}}
\newcommand{\eigenvalue}{\lambda}
\newcommand{\volumeofunitball}{\mathcal{V}_{\dimension}}
\newcommand{\timehorizon}{T}
\newcommand{\algo}{\mathcal{A}}
\newcommand{\regret}[1]{\mathcal{R}^{#1}}
\newcommand{\func}{f}
\newcommand{\funciso}{g}
\newcommand{\dimension}{n}
\newcommand{\Rdim}{\mathbb{R}^{n}}
\newcommand{\radiusofset}[1]{R_{#1}}
\newcommand{\suboptimality}{\varepsilon}
\newcommand{\failureprob}{\delta}
\newcommand{\convexset}{C}
\newcommand{\innerproduct}[2]{\left \langle {#1}, {#2} \right \rangle }
\newcommand{\norm}[1]{\left \Vert {#1} \right \Vert }
\newcommand{\curlyset}[1]{\left \lbrace {#1} \right \rbrace }
\newcommand{\ceilx}[1]{\left \lceil {#1} \right \rceil }
\newcommand{\oracle}[1]{\psi^{#1}}
\newcommand{\oraclewithoutname}{\psi}
\renewcommand{\arcsin}{\sin^{-1}}
\renewcommand{\arccos}{\cos^{-1}}
\newcommand{\identity}{I}
\title{Smooth Convex Optimization using Sub-Zeroth-Order Oracles}
\author {
    Mustafa O. Karabag, 
    Cyrus Neary, 
    Ufuk Topcu \\ 
}
\begin{document}
\maketitle

\begin{abstract}
We consider the problem of minimizing a smooth, Lipschitz, convex function over a compact, convex set using sub-zeroth-order oracles:  an oracle that outputs the sign of the directional derivative for a given point and a given direction, an oracle that compares the function values for a given pair of points, and an oracle that outputs a noisy function value for a given point. We show that the sample complexity of optimization using these oracles is polynomial in the relevant parameters. The optimization algorithm that we provide for the comparator oracle is the first algorithm with a known rate of convergence that is polynomial in the number of dimensions. We also give an algorithm for the noisy-value oracle that incurs a regret of $\tilde{\mathcal{O}}(\dimension^{3.75} \timehorizon^{0.75})$ (ignoring the other factors and logarithmic dependencies) where $\dimension$ is the number of dimensions and $\timehorizon$ is the number of queries.
\end{abstract}

\section{Introduction}

Derivative-free optimization methods are necessary when explicit access to the objective function is not available, or when the function's gradient is hard to compute ~\cite{conn2009introduction}. Utility functions, a concept from economics, provide an example of a type of objective function which may be hard to explicitly characterize. However, while a consumer may not be able to quantify their utility for a given prospect, they will likely be able to rank the available prospects. From a human's perspective, ranking the prospects may be simple, even if it is difficult to directly assign them values~\cite{abbas2015foundations}. For example, consider a reinforcement learning scenario in which a robot learns to perform a task via human feedback. The human may not be able to assign explicit rewards to the demonstrations performed by the robot, but she can rank them~\cite{akrour2012april,furnkranz2012preference,NIPS2012_4805}. While necessary in a range of applications~\cite{conn2009introduction,audet2017derivative}, derivative-free optimization methods are usually inferior in theory compared to first-order optimization methods~\cite{conn2009introduction}. In this work, we leverage the smoothness and convexity of the objective function to provide theoretical guarantees for derivative-free optimization.

We consider the problem of minimizing a smooth, Lipschitz continuous, convex function $\func$ on a convex, compact domain $\convexset \subset \Rdim $ using sub-zeroth-order oracles: i) the \textit{directional-preference oracle} that outputs the sign of the directional derivative for a given point and direction, ii) the \textit{comparator oracle} that compares the function value for two given points, and iii) the \textit{noisy-value oracle} that outputs the function value plus a subgaussian noise. 

For the directional-preference and comparator oracles, we prove an upper bound on the sample complexity that is polynomial in the relevant parameters. Our algorithms take advantage of the convexity and smoothness of the objective function, and rely on gradient estimation. We show that the direction of the gradient can be estimated with high accuracy via the sub-zeroth-order oracles. Having estimated the direction of the gradient, we use a variant of the ellipsoid method~\cite{shor1972utilization,judin1976evaluation}. We show that the sample complexity is $\tilde{\mathcal{O}}(\dimension^{4})$ for the directional-preference and comparator oracles. To the best of our knowledge, the optimization algorithm that we provide for the comparator oracle is the first algorithm with a known polynomial rate of convergence for smooth, convex functions.

We also develop a sublinear regret algorithm for the noisy-value oracle. The algorithm incurs $\tilde{\mathcal{O}}(\dimension^{3.75}\timehorizon^{0.75})$ regret (ignoring the other factors) with high probability where $\timehorizon$ is the number of queries. The best known high probability regret bound for the noisy-value oracle is $\tilde{\mathcal{O}}(\dimension^{9.5} \sqrt{\timehorizon})$~\cite{bubeck2017kernel}. While our algorithm requires smoothness, and its regret is not optimal in terms of the dependency on the number of queries, its lower order dependency on the number of dimensions makes it appealing compared to this existing regret bound. 

\paragraph{Related work}
The bisection method~\cite{burden19852} uses the directional-preference oracle to optimize a one-dimensional function. In multiple dimensions, Qian et al. (\citeyear{qian2015learning}) used the directional-preference oracle to optimize a linear function. Their algorithm uses a predefined set of query directions, whereas we consider a setting where the algorithm is allowed to query any direction at any point. SignSGD~\cite{bernstein2018signsgd} requires the sign of directional derivatives only for fixed orthogonal basis vectors and converges to the optimum for smooth, convex functions. SignSGD enjoys lower order dependency $\mathcal{O}(\dimension)$ on the number of dimensions. However, it has a sub-linear rate of convergence whereas our algorithm has a linear rate of convergence. Additionally, our algorithm for the directional-preference oracle also works for non-smooth functions.

Optimization using the comparator oracle was explored with directional direct search methods~\cite{audet2006mesh} and the Nelson-Mead method~\cite{nelder1965simplex}. Directional direct search is guaranteed to converge to an optimal solution in the limit for smooth, convex functions. However, the algorithm does not have a known rate of convergence. Meanwhile, the Nelson-Mead method may fail to converge to a stationary point for smooth, convex functions~\cite{mckinnon1998convergence}. Convergent variants of the Nelson-Mead method use function values in addition to comparator oracle queries~\cite{price2002convergent}. 

For the regret using the noisy-value oracle, a lower bound of $\Omega(\dimension \sqrt{\timehorizon})$ has been shown~\cite{shamir2013complexity}. Recently, Lattimore~(\citeyear{lattimore2020improved}) gave an existence result for an algorithm that achieves $\tilde{\mathcal{O}}(\dimension^{2.5} \sqrt{\timehorizon})$ regret in the adversarial case. The best known upper bounds with explicit algorithms are $\tilde{\mathcal{O}}(\dimension^{9.5} \sqrt{\timehorizon})$~\cite{bubeck2017kernel} and $\mathcal{O}(\dimension \timehorizon^{0.75})$~\cite{flaxman2004online} for Lipschitz, convex functions in the adversarial case. The regret bound $\mathcal{O}(\dimension^{3.75} \timehorizon^{0.75})$ that we provide is better than $\tilde{\mathcal{O}}(\dimension^{9.5} \sqrt{\timehorizon})$ regret bound of \cite{bubeck2017kernel} if $\timehorizon = o(\dimension^{23})$. Our result differs from~\cite{flaxman2004online} in that our algorithm succeeds with high probability whereas the algorithm given in \cite{flaxman2004online} succeeds in expectation.

We give the the proofs of the technical results in the supplementary material. 
\section{Preliminaries} \label{section:prelims}
We denote the unit vectors in $\Rdim$ as $e_{1}, \ldots, e_{\dimension}$. Let $S$ be a set of vectors in $\Rdim$. $Proj_{S}(x)$ denotes the orthogonal projection of $x$ onto the span of $S$ and $Proj_{S^{\bot}}(x)$ denotes the orthogonal projection of $x$ onto the complement space of the span of $S$. The angle between $x$ and $y$ is $\angle(x,y)$. 

A convex function $\func: \convexset \to \mathbb{R}$ is said to be \textit{$\lipschitz$-Lipschitz} if $	\norm{\func(x) - \func(y)} \leq L \norm{x - y}$ for all $x,y \in \convexset$. A differentiable convex function $\func: \convexset \to \mathbb{R}$ is said to be \textit{$\smoothness$-strongly smooth} if $ |	\func(y) - \func(x) -  \innerproduct{\nabla \func(x)}{y-x}  | \leq  \smoothness \norm{y-x}^2 / 2 $ for all $x,y \in \convexset$.

The \textit{radius} $\radiusofset{\convexset}$ of a compact convex set $\convexset$ is equal to the the radius of the circumscribing ball, i.e., $\radiusofset{\convexset} = \min_{y \in \convexset} \max_{x \in \convexset} \norm{x-y}.$ A \textit{right circular cone} in $\Rdim$ with \textit{semi-vertical angle} $\theta \in [0, \pi/2]$ and \textit{direction} $v \in \Rdim$ is $
\cone(v, \theta) = \curlyset{w | W \in \Rdim, \angle(v,w) \leq \theta } $. An \textit{ellipsoid} in $\Rdim$ is $\ellipsoid(A,x_{0}) =  \curlyset{x | (x-x_{0})^{T} A^{-1} (x - x_{0}) \leq 1} $
where $x_{0} \in \Rdim$ and $A \in \mathbb{R}^{n \times n}$ is a positive definite matrix. The \textit{isotropic transformation} $\isotrans{A}{x_{0}}$ of an ellipsoid $\ellipsoid(A, x_{0})$ is $\isotrans{A}{x_{0}}(x) = A^{-1/2}(x - x_{0})\sqrt{\eigenvalue_{\max}(A)}.$ Intuitively, $\isotrans{A}{x_{0}}$ transforms the ellipsoid into a hypersphere centered at the origin with a radius equal to the ellipsoid's largest radius. The inverse of $\isotrans{A}{x_{0}}$ is $\isotrans{A}{x_{0}}^{-1}(x) = A^{1/2}x/\sqrt{\eigenvalue_{\max}(A)} + x_{0}.$  The \textit{circumscribing ellipsoid} $\circumscribedellipsoid_{\convexset} = \ellipsoid(A^{*}, x^{*}_{0})$ of a compact convex set $\convexset$ satisfies $\det(A^{*}) = \min_{A, x_{0}} \det(A)$ where $\convexset \subseteq \ellipsoid(A, x_{0}) $. We denote the \textit{identity matrix} by $\identity$.

A \textit{$\standarddev^2$-subgaussian} random variable with mean $\mu$ satisfies 
$\Pr(|X - \mu| > t) \leq 2\exp\left( -t^2/(2\standarddev^2) \right).$

\section{Smooth convex optimization using sub-zeroth-order oracles} 
We consider the minimization of a $\smoothness$-smooth, $\lipschitz$-Lipschitz, convex function $\func$ on a compact, convex set $\convexset \subseteq \Rdim$ where $x^{*}$ denotes a minimizer of $\func$. We assume $x^{*}$ is an interior point of $\convexset$ such that $\ellipsoid( \suboptimality \identity / \lipschitz, x^{*}) \subseteq \convexset$, where $\suboptimality$ is the desired suboptimality gap. This assumption is included for simplicity, but can be removed by considering a near-optimal interior point with a sufficiently large neighborhood. Such a point is guaranteed to exist after the isotropic transformation. We also assume $\dimension \geq 2$, but the algorithms that we present generalize to the one-dimensional setting. 

\subsection{Sub-zeroth-order oracles} 
\label{section:problemoracles}

The first oracle we consider is the directional-preference oracle which outputs a binary value indicating whether the function is increasing on the queried direction at the queried point. The \textit{directional-preference oracle} $\oracle{DP}:\convexset \times \Rdim \to \lbrace -1 , 1  \rbrace$ is a function such that $\oracle{DP}(x,y) = -1$ if $\innerproduct{\nabla\func(x)}{y}  < 0$, and $\oracle{DP}(x,y) = 1$ otherwise.

We also consider the comparator oracle, which compares the function at a pair of query points.
The \textit{comparator oracle} $\oracle{\convexset}:\convexset \times \convexset \to \curlyset{-1, 1}$ is a function such that $\oracle{\convexset}(x,y) = -1$ if $\func(x)  \geq \func(y)$, and $\oracle{\convexset}(x,y) = 1$	otherwise. The comparator oracle is similar to the directional-preference oracle in that $\oracle{\convexset}(x, x+ky)$ approaches $\oracle{DP}(x,y)$ in the limit as $k$ approaches zero, i.e., $\lim_{k \to 0^{+}} \oracle{\convexset}(x, x+ ky) = \oracle{DP}(x,y)$ for all $x \in \convexset$ and $y \in \Rdim$.

The \textit{noisy value oracle} $\oracle{NV}: \convexset \to \mathbb{R}$ outputs the function value plus a $\sigma^2$-subgaussian noise, i.e., $\oracle{NV}(x) = f(x) + Z$ for all $x \in \convexset$, where $Z$ is a $\sigma^2$-subgaussian random variable with zero mean.

In addition to the sub-zeroth-order oracles, we also consider the zeroth-order value oracle as preliminary step for the noisy-value oracle. The \textit{value oracle} $\oracle{V}:\convexset \to \mathbb{R}$ outputs the function value at the queried point, i.e., $\oracle{V}(x) = \func(x)$ for all $x \in \convexset$.

\subsection{Ellipsoid method with approximate gradients} \label{section:optimize}

In this section, we provide optimization algorithms that employ the sub-zeroth-order oracles. We use a variation of the ellipsoid method~\cite{shor1972utilization,judin1976evaluation} that uses the approximately correct gradient direction. The ellipsoid method begins each iteration with an ellipsoid containing an optimal point, it then computes the function's gradient at the ellipsoid center and removes all points from the feasible set that lie along an ascent direction. The remaining points in the set are then enclosed in the minimum volume circumscribing ellipsoid, which is used as the starting ellipsoid in the next iteration. The volume of the generated ellipsoid decreases in each iteration. For a Lipschitz, convex function, this method is guaranteed to output a near optimal solution in a finite number of iterations. 

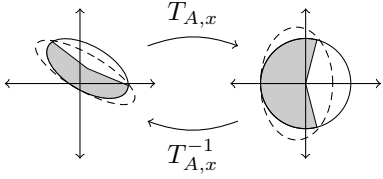
\begin{figure}[ht]
	
	\centering
		\begin{tikzpicture}[scale=1]

		\draw[line width=0.0mm, fill=gray!40]	(1.5,0) -- (1.65,0.5805) arc (75.52:284.47:0.6) -- cycle;
		\draw (1.5,0) ellipse (0.6 and 0.6);
		\draw[densely dashed] (1.38,0) ellipse (0.48 and 0.75);

		\draw[rotate around={60: (-1.4,0.2)} ] (-1.4,0.2) ellipse (0.3 and 0.6);
		\draw[rotate around={60: (-1.4,0.2)}, line width=0.0mm, fill=gray!40]	(-1.4,0.2) -- (-1.325,0.7805) arc[start angle=75.52,end angle=284.47, x radius=0.3, y radius=0.6] -- cycle;
		\draw[densely dashed, rotate around={60: (-1.4,0.2)} ] (-1.46,0.2) ellipse (0.24 and 0.75);
		
		\draw[<->] (-1.5, -1) -- (-1.5,1);
		\draw[<->] (-0.5, 0) -- (-2.5,0);
		
		\draw[<->] (1.5, -1) -- (1.5,1);
		\draw[<->] (0.5, 0) -- (2.5,0);	
		
		\path[->] (-0.6,0.5)  edge  [bend left=20]  node[above] {$\isotrans{A}{x}$} (0.6,0.5);
		\path[->] (0.6,-0.5)  edge  [bend left=20]  node[below] {$\isotransinverse{A}{x}$} (-0.6,-0.5);

		\end{tikzpicture}

	\caption{Illustrations of the ellipsoid cuts. The original coordinates are on the left and the isotropic coordinates on the right. The dashed ellipsoids enclose the shaded regions that are the possible descent directions. }
	\label{fig:transfromations}

\end{figure}

While the information on the gradient direction is sufficient to apply the classical ellipsoid method, computing the exact gradient direction would require infinitely many queries to the sub-zeroth-order oracles. On the other hand, if the semi-vertical angle of the cone of possible gradient directions is small enough, i.e., less than $\arcsin(1/\dimension)$, in the isotropic coordinates, one can still find an ellipsoid with a smaller volume that contains all possible descent directions and the optimal solution. 

\begin{lemma} \label{lemma:ellipsoid}
	Let $\func: \Rdim \to \mathbb{R}$ be a differentiable, convex function. For $\theta \in [0, \arcsin(1/\dimension)]$ and $p \in \Rdim$, if $\nabla \func(0) \in \cone(p, \theta)$, then $\func(x') \geq f(0)$ for all $x' \in \ellipsoid(I, 0) \cap 	 \curlyset{ x | \innerproduct{p / \norm{p}}{ x } > \sin\theta },$ and there exists an ellipsoid $\mathcal{E}^{*}$ such that $\mathcal{E}^{*} \supseteq \ellipsoid(I, 0) \cap 	 \curlyset{ x | \innerproduct{p / \norm{p}}{ x } \leq \sin\theta } $ and \[ \frac{Vol(\mathcal{E}^{*})}{Vol(\ellipsoid(I, 0)) } = \left( \frac{\dimension^2 (1- \sin^2(\theta))}{\dimension^2 - 1} \right)^{(\dimension-1)/2}  \frac{\dimension(1+\sin(\theta))}{\dimension+1} \]  If $\theta = \arcsin(1/(2\dimension))$, then \[Vol(\mathcal{E}^{*}) \leq Vol(\ellipsoid(I, 0)) e^{-\frac{1}{8(\dimension + 1)}} <Vol(\ellipsoid(I, 0)).\]
\end{lemma}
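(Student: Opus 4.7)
My plan is to prove the three assertions of the lemma separately. For the no-descent statement, I apply the first-order convexity inequality $\func(x') \geq \func(0) + \innerproduct{\nabla\func(0)}{x'}$ and reduce to showing $\innerproduct{\nabla\func(0)}{x'} \geq 0$. Writing $\hat p = p/\norm{p}$, I decompose $\nabla\func(0) = \norm{\nabla\func(0)}(\cos\phi\, \hat p + \sin\phi\, \hat g_\bot)$ with a unit $\hat g_\bot \perp \hat p$ and $\phi \leq \theta$, and $x' = s\hat p + x'_\bot$ with $s = \innerproduct{\hat p}{x'} > \sin\theta$ and $\norm{x'_\bot} \leq \sqrt{1-s^2}$. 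Cauchy--Schwarz then yields $\innerproduct{\nabla\func(0)}{x'}/\norm{\nabla\func(0)} \geq s\cos\phi - \sin\phi \sqrt{1-s^2}$, and this right-hand side is monotonically increasing in $s > 0$, with its worst case over $\phi \in [0, \theta]$ attained at $\phi = \theta$. Evaluated at $s = \sin\theta$, the bound equals $\sin\theta\cos\theta - \sin\theta\cos\theta = 0$, so the strictly larger values $s > \sin\theta$ produce strictly positive inner products.

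For $\mathcal{E}^{*}$, I use rotational symmetry about $\hat p$ to reduce the minimum-volume enclosing ellipsoid of $\ellipsoid(\identity,0) \cap \curlyset{x : \innerproduct{\hat p}{x} \leq \sin\theta}$ to ellipsoids of revolution of the form $\curlyset{x : (\innerproduct{\hat p}{x} - c)^2/a^2 + \norm{Proj_{\hat p^\bot}(x)}^2/b^2 \leq 1}$. Requiring the candidate to pass through the antipode $-\hat p$ and through the rim of the cap at height $\sin\theta$ forces $a = c+1$ and $b^2 = (1-\sin^2\theta)\, a^2 / (a^2 - (\sin\theta - c)^2)$. Minimizing $ab^{\dimension-1}$ over $c$ yields the stationary values $a = \dimension(1+\sin\theta)/(\dimension+1)$ and $b^2 = \dimension^2(1-\sin^2\theta)/(\dimension^2-1)$, whose product is exactly the stated volume ratio. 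Containment of the full cap (not just its two extremal loci) follows by observing that along the sphere $s^2 + t^2 = 1$ with $s \leq \sin\theta$ the one-variable quadratic $s \mapsto (s-c)^2/a^2 + (1-s^2)/b^2$ is convex (since $b > a$ whenever $\sin\theta < 1/\dimension$) and takes value one at both endpoints $s = -1$ and $s = \sin\theta$, so it stays at most one in between.

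For the final inequality, substituting $\sin\theta = 1/(2\dimension)$ gives $\dimension^2(1-\sin^2\theta)/(\dimension^2-1) = 1 + 3/(4(\dimension^2-1))$ and $\dimension(1+\sin\theta)/(\dimension+1) = 1 - 1/(2(\dimension+1))$. Applying $1+z \leq e^z$ to each factor produces an upper bound whose exponents sum to $3/(8(\dimension+1)) - 1/(2(\dimension+1)) = -1/(8(\dimension+1))$, completing the claim. I expect the main obstacle of the plan to lie in the second step: one must justify via a symmetrization argument that the optimum among enclosing ellipsoids is attained by a body of revolution, verify that the stationary point of the one-variable volume expression is a global minimum rather than a saddle, and supply the containment check just sketched; the other steps reduce to direct algebra once these are in place.
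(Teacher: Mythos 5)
Your proof is correct, and it takes a genuinely different route from the paper in the middle step. For the first claim (no descent in the cap complement), the paper argues in cone language: every $x$ with $\innerproduct{p/\norm{p}}{x} > \sin\theta$ and $\norm{x}\le 1$ lies in the dual cone $\cone(p,\pi/2-\theta)$, hence $\innerproduct{\nabla\func(0)}{x}\ge 0$. Your coordinate decomposition into $\hat p$ and orthogonal components gives the same inequality $s\cos\phi-\sin\phi\sqrt{1-s^2}\ge 0$; this is the same idea, written out more explicitly. For the final inequality you and the paper do the identical algebra.

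The real divergence is the middle claim. The paper simply cites Theorem~2.1 of Goldfarb and Todd (1982), the standard deep-cut ellipsoid formula, whereas you construct $\mathcal{E}^{*}$ from scratch as an ellipsoid of revolution. Your construction is sound and self-contained, which is a genuine gain. But I would also push back on your own assessment of where the difficulty lies: the lemma does \emph{not} ask for the minimum-volume enclosing ellipsoid, only for the existence of some ellipsoid with the stated volume ratio. So the symmetrization step (reducing to bodies of revolution) and the verification that your stationary point is a global minimum rather than a saddle are both unnecessary; you can simply take the proposed $a=\dimension(1+\sin\theta)/(\dimension+1)$, $b^2=\dimension^2(1-\sin^2\theta)/(\dimension^2-1)$, $c=a-1$ as a definition and check they work. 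The step that \emph{is} essential, and that you sketched correctly, is the containment argument: for $s\in[-1,\sin\theta]$ the quadratic $s\mapsto(s-c)^2/a^2+(1-s^2)/b^2$ is convex (coefficient $1/a^2-1/b^2\ge 0$, with equality exactly at $\sin\theta=1/\dimension$, where it degenerates to an affine function and the argument still goes through) and equals $1$ at both endpoints, so the whole cap is enclosed. With that adjustment of emphasis the proposal is complete and correct.
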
 

Lemma \ref{lemma:ellipsoid} shows that if the semi-vertical angle is small enough, there exists an ellipsoid with a smaller volume that contains the intersection of the possible descent directions and the initial ellipsoid as shown in Figure \ref{fig:transfromations}. Since the isotropic transformation is affine, it preserves the ratio of volumes. Thus, there also exists an ellipsoid with a smaller volume in the original coordinates as shown in Figure \ref{fig:transfromations}.

We need to approximately estimate the direction of the gradient in order to employ Lemma \ref{lemma:ellipsoid}. For the value and the noisy-value oracles, we can estimate the direction of the gradient by sampling the function on a fixed set of basis vectors. However, to estimate the gradient direction using the comparator and directional-preference oracles, we need to successively select different collections of vectors along which to sample the function. In the following two sections, we describe in detail how to estimate the direction of the gradient using the sub-zeroth-order oracles, and how to use these estimations for optimization. 

\tdplotsetmaincoords{70}{110}


\pgfmathsetmacro{\pdoner}{1}
\pgfmathsetmacro{\pdonetheta}{0}
\pgfmathsetmacro{\pdonephi}{0}

\pgfmathsetmacro{\pdtwor}{1}
\pgfmathsetmacro{\pdtwotheta}{45}
\pgfmathsetmacro{\pdtwophi}{0}

\pgfmathsetmacro{\pdtwoantir}{1}
\pgfmathsetmacro{\pdtwoantitheta}{45}
\pgfmathsetmacro{\pdtwoantiphi}{180}

\pgfmathsetmacro{\pdthreer}{1}
\pgfmathsetmacro{\pdthreetheta}{45}
\pgfmathsetmacro{\pdthreephi}{90}

\pgfmathsetmacro{\pdthreeantir}{1}
\pgfmathsetmacro{\pdthreeantitheta}{45}
\pgfmathsetmacro{\pdthreeantiphi}{270}

\pgfmathsetmacro{\pdmidr}{1}
\pgfmathsetmacro{\pdmidtheta}{22.5}
\pgfmathsetmacro{\pdmidphi}{45}

\pgfmathsetmacro{\pdmidunr}{0.871}
\pgfmathsetmacro{\pdmiduntheta}{22.5}
\pgfmathsetmacro{\pdmidunphi}{45}
\begin{figure}[t]
	\centering
	\begin{subfigure}[b]{.12\textwidth}
		\centering
		\begin{tikzpicture}[scale=1.1,tdplot_main_coords]
		
		\coordinate (O) at (0,0,0);
		
		\tdplotsetcoord{pdone}{\pdoner}{\pdonetheta}{\pdonephi}
		\tdplotsetcoord{pdtwo}{\pdtwor}{\pdtwotheta}{\pdtwophi}
		\tdplotsetcoord{pdthree}{\pdthreer}{\pdthreetheta}{\pdthreephi}	
		\tdplotsetcoord{pdtwoanti}{\pdtwoantir}{\pdtwoantitheta}{\pdtwoantiphi}
		\tdplotsetcoord{pdthreeanti}{\pdthreeantir}{\pdthreeantitheta}{\pdthreeantiphi}	
		
		\draw[thick,->] (0,0,0) -- (1.2,0,0) node[anchor=north]{$d_{2}$};
		\draw[thick,->] (0,0,0) -- (0,1.2,0) node[anchor=north]{$d_{3}$};
		\draw[thick,->] (0,0,0) -- (0,0,1.2) node[anchor=south]{$d_{1}$};
		
		\draw[ color=black] (O) -- (pdone);
		\draw[ color=black] (O) -- (pdtwo) ;	
		\draw[ color=black] (O) -- (pdthree);
		\draw[dotted, color=black] (O) -- (pdtwoanti);	
		\draw[dotted, color=black] (O) -- (pdthreeanti);


		\tdplotsetrotatedcoords{0}{0}{0}	
		\tdplotdrawarc[tdplot_rotated_coords]{(0,0,0.707)}{0.707}{0}{360}{anchor=south west}{}
		\tdplotsetrotatedcoords{0}{270}{0}	
		\tdplotdrawarc[dotted, tdplot_rotated_coords]{(0,0,0)}{1}{-45}{0}{anchor=south west}{}
		\tdplotdrawarc[tdplot_rotated_coords]{(0,0,0)}{1}{0}{45}{anchor=south west}{}
		
		\tdplotsetrotatedcoords{270}{270}{0}	
		\tdplotdrawarc[dotted, tdplot_rotated_coords]{(0,0,0)}{1}{-45}{0}{anchor=south west}{}
		\tdplotdrawarc[tdplot_rotated_coords]{(0,0,0)}{1}{0}{45}{anchor=south west}{}
				\tdplotsetrotatedcoords{0}{270}{0}	
				\tdplotdrawarc[tdplot_rotated_coords,  color=gray]{(O)}{0.2}{45}{0}{anchor= west}{}
				
				\node[color=gray] (a) at (0,0.35,0.15) {$\gamma$};
		
		\end{tikzpicture}
		\caption{}
		\label{fig:cone1}
	\end{subfigure} \quad
	\begin{subfigure}[b]{.12\textwidth}
		\centering
		\begin{tikzpicture}[scale=1.1,tdplot_main_coords]
		
		\coordinate (O) at (0,0,0);
		
		
		\tdplotsetcoord{pdone}{\pdoner}{\pdonetheta}{\pdonephi}
		\tdplotsetcoord{pdtwo}{\pdtwor}{\pdtwotheta}{\pdtwophi}
		\tdplotsetcoord{pdthree}{\pdthreer}{\pdthreetheta}{\pdthreephi}	
		\tdplotsetcoord{pdtwoanti}{\pdtwoantir}{\pdtwoantitheta}{\pdtwoantiphi}
		\tdplotsetcoord{pdthreeanti}{\pdthreeantir}{\pdthreeantitheta}{\pdthreeantiphi}	
		
		\draw[ thick,->] (0,0,0) -- (1.2,0,0) node[anchor=north]{$d_{2}$};
		\draw[thick,->] (0,0,0) -- (0,1.2,0) node[anchor=north]{$d_{3}$};
		\draw[thick,->] (0,0,0) -- (0,0,1.2)  node[anchor=south]{$d_{1}$};
		
		\draw[ color=black] (O) -- (pdtwo) node[anchor=north east]{};	
		\draw[ color=black] (O) -- (pdthree) node[anchor=north west]{};


		\tdplotsetrotatedcoords{0}{0}{0}	
		\tdplotdrawarc[tdplot_rotated_coords]{(0,0,0.707)}{0.707}{0}{90}{anchor=south west}{}
		\tdplotsetrotatedcoords{0}{270}{0}	
		\tdplotdrawarc[tdplot_rotated_coords]{(0,0,0)}{1}{0}{45}{anchor=south west}{}
		
		\tdplotsetrotatedcoords{270}{270}{0}	
		\tdplotdrawarc[tdplot_rotated_coords]{(0,0,0)}{1}{0}{45}{anchor=south west}{}
				\tdplotsetrotatedcoords{0}{270}{0}	
				\tdplotdrawarc[tdplot_rotated_coords,  color=gray]{(O)}{0.2}{0}{45}{anchor=south west}{}
						\node[color=gray] (a) at (0,0.35,0.15) {$\gamma$};
		\end{tikzpicture}
		\caption{}
		\label{fig:cone2}
	\end{subfigure} \quad
	\begin{subfigure}[b]{.12\textwidth}
		\centering
		\begin{tikzpicture}[scale=1.1,tdplot_main_coords]
		
		\coordinate (O) at (0,0,0);
		
		
		\tdplotsetcoord{pdone}{\pdoner}{\pdonetheta}{\pdonephi}
		\tdplotsetcoord{pdtwo}{\pdtwor}{\pdtwotheta}{\pdtwophi}
		\tdplotsetcoord{pdthree}{\pdthreer}{\pdthreetheta}{\pdthreephi}	
		\tdplotsetcoord{pdtwoanti}{\pdtwoantir}{\pdtwoantitheta}{\pdtwoantiphi}
		\tdplotsetcoord{pdthreeanti}{\pdthreeantir}{\pdthreeantitheta}{\pdthreeantiphi}	
		
		\tdplotsetcoord{pdmid}{\pdmidr}{\pdmidtheta}{\pdmidphi}	
		\tdplotsetcoord{pdmidun}{\pdmidunr}{\pdmiduntheta}{\pdmidunphi}
		
		\draw[thick,->] (0,0,0) -- (1.2,0,0) node[anchor=north]{ $d_{2}$};
		\draw[thick,->] (0,0,0) -- (0,1.2,0) node[anchor=north ]{$d_{3}$};
		\draw[thick,->] (0,0,0) -- (0,0,1.2) node[anchor=south]{$d_{1}$};
		
		\draw[ color=black] (O) -- (pdtwo) node[anchor=north east]{} ;	
		\draw[ color=black] (O) -- (pdthree) node[anchor=north west]{};
		
		\draw[->, color=black] (O) -- (pdmid) node[anchor=north west]{$p$};

		
		\tdplotsetrotatedcoords{45}{22.5}{-20.94}
		\tdplotdrawarc[dashed, tdplot_rotated_coords]{(0,0,0.871)}{0.536}{0}{360}{anchor=south west}{}

		\tdplotsetrotatedcoords{0}{0}{0}	
		\tdplotdrawarc[tdplot_rotated_coords]{(0,0,0.707)}{0.707}{0}{90}{anchor=south west}{}
		\tdplotsetrotatedcoords{0}{270}{0}	
		\tdplotdrawarc[tdplot_rotated_coords]{(0,0,0)}{1}{0}{45}{anchor=south west}{}
		
		\tdplotsetrotatedcoords{270}{270}{0}	
		\tdplotdrawarc[tdplot_rotated_coords]{(0,0,0)}{1}{0}{45}{anchor=south west}{}
		
				\tdplotsetrotatedcoords{22.5}{20.94-90}{8.42}
		\tdplotdrawarc[tdplot_rotated_coords,  color=gray]{(0,0,0)}{0.2}{0}{32.84}{anchor=west}{}
								\node[color=gray] (a) at (0,0.35,0.15) {$\gamma'$};
		
		\end{tikzpicture}
		\caption{}
		\label{fig:cone3}
	\end{subfigure}
	\caption{Illustrations the gradient pruning method by directional-preferences. $(a)$ The cone $\cone(d_{1}, \gamma)$ is the possible gradient directions. $(b)$ The quarter cone is the possible gradient directions after the queries. $(c)$ The dashed cone $\cone(p, \gamma')$ overapproximates possible gradient directions. }
	\label{fig:smallergradientcones}
\end{figure}
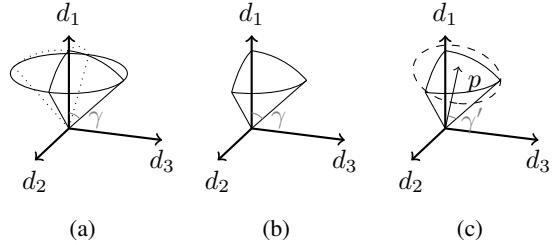

\subsubsection{Optimization using the directional-preference oracle}
For the directional preference oracle, we can estimate direction of the gradient by iteratively sampling the function along different sets of basis vectors. Consider Figure \ref{fig:smallergradientcones}  as an example. Assume that the gradient $\nabla \func(x)$ lies in  $\cone(d_{1}, \gamma)$ shown in Figure \ref{fig:cone1}. We can use $\oracle{DP}(x,d_{2})$ and $\oracle{DP}(x,d_{3})$ to prune the direction estimation. The query directions slice the $\dimension$-dimensional space into $2^{\dimension}$ hyperoctants that are symmetric around the direction of the cone. The query results determine the hyperoctant that the gradient lies in. For example, if $\oracle{DP}(x,d_{2}) = 1$ and $\oracle{DP}(x,d_{3}) = 1$, the gradient lies in the quarter cone given in Figure \ref{fig:cone2}. Before the next set of queries, we limit the possible set of gradient directions with  $\cone(p, \gamma')$ such that $\gamma' < \gamma$  as shown in Figure \ref{fig:cone3}.

\begin{lemma} \label{lemma:smallergradientcones}
	Let $\gamma \in (0, \pi/2],$ $d_{1} = e_{1},$ $d_{i} = \cos(\gamma)e_{1} + \sin(\gamma)e_{i},$ for all $ i\in \lbrace 2, \ldots, \dimension \rbrace$, $p = \sum_{i=1}^{\dimension} d_{i},$ and $\gamma' = \arccos(\langle p,d_{2} \rangle/ \|p\|) )$. 	Then, \(\mathcal{F} (p, \gamma' ) \supseteq \mathcal{F}(d_{1}, \gamma ) \cap \lbrace x | x_{i} \geq 0 \rbrace\) and \(\sin(\gamma')/\sin(\gamma) \leq \sqrt{\dimension-1} / \sqrt{\dimension}. \)
\end{lemma}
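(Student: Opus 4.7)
The plan is to first compute $p$ and $\gamma'$ explicitly, then prove the two claims by reducing each to a constrained optimization on the unit sphere and a short algebraic inequality, respectively. Writing $c = \cos\gamma$ and $s = \sin\gamma$, I would start by noting that
\[
  p = \bigl(1+(n-1)c\bigr)\, e_{1} + s \sum_{i=2}^{n} e_{i},
\]
so $\|p\|^{2} = (1+(n-1)c)^{2} + (n-1)s^{2}$ and $\langle p, d_{2}\rangle = c(1+(n-1)c) + s^{2} = 1 + c + (n-2)c^{2}$. By the symmetry of the construction, the same value arises from $\langle p, d_{j}\rangle$ for every $j \geq 2$, and since $\|d_{2}\|=1$ the definition gives $\|p\|\cos\gamma' = \langle p, d_{2}\rangle$.

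For the cone inclusion I would fix a unit vector $v \in \cone(d_{1},\gamma) \cap \{x : x_{i}\geq 0\}$ and show $\langle p, v\rangle \geq \langle p, d_{2}\rangle$, which is equivalent to $\angle(p,v) \leq \gamma'$. Writing $\langle p, v\rangle = (1+(n-1)c)\, v_{1} + s \sum_{i\geq 2} v_{i}$, I would first fix $v_{1}\in[c,1]$ and minimize $\sum_{i\geq 2} v_{i}$ subject to $\sum_{i\geq 2} v_{i}^{2} = 1 - v_{1}^{2}$ and $v_{i}\geq 0$. An extreme-point argument (concentrating all mass on one coordinate) yields the minimum $\sqrt{1-v_{1}^{2}}$, which reduces the problem to the one-variable minimization of $h(v_{1}) := (1+(n-1)c)\, v_{1} + s\sqrt{1-v_{1}^{2}}$ on $[c,1]$. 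Concavity of $h$ together with the direct computation $h(c) - h(1) = (n-2)\,c\,(c-1) \leq 0$ shows that the minimum is attained at the boundary point $v_{1} = c$, giving exactly $h(c) = \langle p, d_{2}\rangle$, as wanted.

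For the ratio bound, I would expand
\[
  \sin^{2}\gamma' \;=\; \frac{\|p\|^{2} - \langle p, d_{2}\rangle^{2}}{\|p\|^{2}}
\]
and, after factoring $s^{2}$ out of the numerator using $1-c^{2} = s^{2}$, obtain $\sin^{2}\gamma' / \sin^{2}\gamma = N/D$ with $N = (n-1) + 2(n-2)c + (n-2)^{2}c^{2}$ and $D = n + 2(n-1)c + (n-1)(n-2)c^{2}$. The inequality $N/D \leq (n-1)/n$ reduces to $(n-1)\,D - n\,N \geq 0$; collecting coefficients of $1, c, c^{2}$, I expect this expression to factor as $\bigl[(n-1)^{2} - n(n-2)\bigr]\bigl(2c + (n-2)c^{2}\bigr)$, and since the bracket simplifies to $1$ the whole quantity equals $2c + (n-2)c^{2} \geq 0$ for $c \in [0,1]$ and $n\geq 2$. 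The main obstacle is spotting this clean factorization: without noticing that the identity $(n-1)^{2} - n(n-2) = 1$ is precisely what makes the linear and quadratic coefficients share a common factor, the computation would look unwieldy; once spotted, everything collapses and even exhibits that the bound is tight at $\gamma = \pi/2$.
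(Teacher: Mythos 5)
Your proof is correct, and it differs from the paper's in a way worth noting. For the cone inclusion, both you and the paper reduce to showing $\langle p, v\rangle \geq \langle p, d_2\rangle$ for unit $v$ with $v_1 \geq \cos\gamma$ and $v_i\geq 0$, and both then concentrate the mass in the off-$e_1$ coordinates on a single coordinate; but where the paper switches to the auxiliary vector $q' = b\,d_1 + (1-b)\,d_2$ (so that $\langle p, q'\rangle$ is \emph{linear} in $b$, at the cost of an extra step bounding $\|q'\| \leq 1$ to replace $\arccos(\langle p,q'\rangle/(\|p\|\|q'\|))$ by $\arccos(\langle p,q'\rangle/\|p\|)$), you work directly with the unit-sphere constraint, obtain the one-variable function $h(v_1)=(1+(n-1)c)v_1 + s\sqrt{1-v_1^2}$, and use its \emph{concavity} to send the minimum to an endpoint. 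Both routes boil down to comparing $h(c)$ and $h(1)$, but your concavity argument is self-contained and avoids the rescaling step. The bigger divergence is in the ratio bound: the paper computes $\partial/\partial\gamma$ of $\sin\gamma'/\sin\gamma$, verifies nonnegativity of the derivative, and evaluates at $\gamma=\pi/2$, whereas you prove $N/D \leq (n-1)/n$ by pure algebra, observing that $(n-1)D - nN = \bigl[(n-1)^2 - n(n-2)\bigr]\bigl(2c+(n-2)c^2\bigr) = 2c+(n-2)c^2 \geq 0$. Your version is shorter, requires no calculus, and transparently shows tightness at $c=0$, i.e.\ $\gamma=\pi/2$; the paper's derivative argument gives the additional information that the ratio is monotone in $\gamma$, which is not needed for the stated claim. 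I verified your intermediate identities ($\langle p, d_2\rangle = 1 + c + (n-2)c^2$, $\|p\|^2 = n + 2(n-1)c + (n-1)(n-2)c^2$, the factor-out of $s^2$, and $h(c)-h(1)=(n-2)c(c-1)$), and they are all correct.
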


Lemma \ref{lemma:smallergradientcones} shows that if we choose the direction of the new cone as the average of the extreme points of the intersection of the previous cone and the hyperoctant as in Figure \ref{fig:cone3}, then the semi-vertical angle of the cone of possible gradient directions is a fraction of the previous angle depending on the number of dimensions. For the directional-preference and the comparator oracles, we repeat this process until the cone of possible gradient directions is sufficiently small, i.e., less than $\arcsin(1/(2\dimension))$.

Algorithm \ref{algo:optimizeDP} obtains a near-optimal solution for a given smooth, Lipschitz, convex function. At each iteration, we estimate the gradient direction using the direction pruning algorithm \textsc{PD-DP}, which implements the procedure described above. After the gradient direction estimation, we remove the ascent directions from the feasible set and proceed to the next iteration by enclosing the feasible set using an ellipsoid.

In the classical ellipsoid method, the output is the ellipsoid center with the smallest function value. The directional-preference oracle cannot compare the function values for a given pair of points, $x_{l}$ and $x_{r}$. However, we can use the bisection method to find a point $x'$ such that $\func(x') \leq \min(\func(x_{l}), \func(x_{r})) + \delta$ for a given $\delta$. Since the function is Lipschitz, the search stops after a finite number of iterations. To find a point whose function value is close to the function value of the optimal ellipsoid center,  we can remove $x^{l}$ and $x^{r}$ from the set of candidate points and add $x'$ to the set of the set of candidate points. Hence, the sample complexity of finding a point $x''$ such that $\func(x'') \leq \min_{x \in X} \func(x) + \suboptimality/2$ is linear in the size of $X$. The function \textsc{Compare-DP} implements the bisection search method on a given set $X$.

\begin{algorithm}[ht] 
	\caption{The optimization algorithm \textsc{Optimize-DP($X, \oracle{DP}$)} for the directional preference oracle } \label{algo:optimizeDP}
	\begin{algorithmic}[1]
		\State Find $\circumscribedellipsoid_{\convexset} = \ellipsoid(A^{(k)}, x^{(1)})$ of $\convexset$.
		\State Set $X = \lbrace x^{(1)} \rbrace$, $\convexset^{(1)} = \convexset$, $K = \left \lceil 8\dimension(\dimension+1) \log\left( \frac{2 \radiusofset{\convexset} \lipschitz}{\suboptimality}  \right) + 1  \right \rceil$.
		\For{$ k =1 \ldots K$}
		\State Set $p =$ \textsc{PD-DP}$\left( \oracle{DP}, x^{(k)}, \arcsin\left( 1/(2n)\right), A^{(k)} \right)$.
		\State Set $\convexset^{(k+1)} = \convexset^{(k)} \cap  \ellipsoid(A^{(k)}, x^{(k)}) \cap \isotransinverse{A^{(k)}}{x^{(k)}} \left(\curlyset{ x | \innerproduct{p / \norm{p}}{ x } \leq 1/(2\dimension) }\right)$.
		\State Find $\circumscribedellipsoid_{\convexset^{(k+1)}} = \ellipsoid(A^{(k+1)}, x^{(k+1)})$ of $\convexset^{(k+1)}$.
		\State Set $X = X \cup \lbrace x^{(k+1)} \rbrace$.
		\EndFor
		\State \textbf{return} $\textsc{Compare-DP}(X,\oracle{DP}, \suboptimality/2)$.
		
	\end{algorithmic}
	
\end{algorithm}

\begin{algorithm}[ht] 
	\caption*{\textbf{Function} \textsc{PD-DP}($ x, \theta, \isotrans{A}{x} $)} 
	\begin{algorithmic}[1]
	\State $p = e_{1}$, $r = 1$, $\gamma = \pi/2$.
	\While{$\gamma > \theta$}
	\State Find $d_{i}$ such that $d_{1} = p$, $d_{i} \perp d_{j}$ for all $i\neq j \in [n]$, and $\| d_{i} \| = 1$ for all $i \in [n]$. 
	\State Query $\oracle{DP}(x, A^{-1/2}d_{1}), \ldots, \oracle{DP}(x,A^{-1/2}d_{n})$. \label{algo:findperpdirections}
	\State Set $w_{1} = d_{1}$ and for all $i \in \lbrace 2, \ldots, n \rbrace$, set $w_{i} =  d_{1} \cos(\gamma) +  d_{i} \oracle{DP}(x, A^{-1/2}d_{i}) \sin(\gamma)$. \label{algo:projection}
	\State Set $p = \left( \sum_{i=1}^{n} w_{i}/n \right)/ \left\| \sum_{i=1}^{n} w_{i}/n  \right \|$, \label{algo:findthesmallercone}
	\State Set $\gamma = \arccos(\langle p, w_{2} \rangle)$.
	\State Set $r = \arcsin (\gamma)$. \label{algo:assignment}
	\EndWhile
	\State \textbf{return} $p$.
	\end{algorithmic}
\end{algorithm}

\begin{algorithm}[ht] 
	\caption*{\textbf{Function} \textsc{Compare-DP($X, \suboptimality$)}  } \label{algo:DPcomparison}
	\begin{algorithmic}[1]
		\State Set $X^{*} = X$ and $m = |X|$.
		\While{$|X^{*}| > 1$}
		\State Arbitraritly pick $x^{1}, x^{2} \in X$ such that $x^{1} \neq x^{2}$.
		\State Set $X^{*} = X^{*} \setminus \lbrace x^{1}, x^{2} \rbrace$.
		\State Set $x^{l} = x^{1}$ and $x^{r} = x^{2}$.
		\While{$\| x^{r} - x^{l}\| \leq  2\suboptimality/(\lipschitz m)$}
		\State Query $\oracle{DP}((x^{r} + x^{l})/2, (x^{r} - x^{l})/2)$.
		\If{$\oracle{DP}((x^{r} + x^{l})/2, (x^{r} - x^{l})/2) = 0$}
		\State $x^{l} = (x^{r} + x^{l})/2.$
		\Else
		\State $x^{r} = (x^{r} + x^{l})/2.$
		\EndIf
		\EndWhile
		\State $X^{*} = X^{*} \cup \lbrace (x^{r} + x^{l})/2 \rbrace$
		\EndWhile
		\State \textbf{return} $x^{*} \in X^{*}$.	
	\end{algorithmic}
	
\end{algorithm}

\begin{theorem} \label{theorem:DP} Let $K = \ceilx{8 \dimension(\dimension +1) \log\left( \frac{2 \radiusofset{\convexset \lipschitz}}{\suboptimality} \right)}.$ For an $\lipschitz$-Lipschitz, $\smoothness$-smooth, convex function $\func: \convexset \to \mathbb{R}$, Algorithm \ref{algo:optimizeDP} makes at most
\[\dimension K \ceilx{2\dimension \log(2\dimension)}  + K \log_{2}\left( \frac{\radiusofset{\convexset} \lipschitz (K + 1) }{\suboptimality} \right) \] queries to $\oracle{DP}$ and the output $x'$ of Algorithm \ref{algo:optimizeDP} satisfies $\func(x') \leq \min_{x \in \convexset} \func(x) + \suboptimality$.
\end{theorem}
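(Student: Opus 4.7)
The plan is to split the proof into two parts, a correctness argument and a query-count argument, each of which reduces to a small calculation once the right invariants are in place.

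For correctness, I would run a standard ellipsoid-method potential argument adapted to the approximate-gradient setting. First, \textsc{PD-DP} is designed so that upon termination its output $p$ lies within an $\arcsin(1/(2n))$-cone of the isotropically transformed gradient at $x^{(k)}$: Lemma 2 shows each while-loop iteration shrinks $\sin(\gamma)$ by a factor $\sqrt{(n-1)/n}$, and the loop terminates as soon as $\gamma \leq \arcsin(1/(2n))$. Applying Lemma 1 in the isotropic coordinates, the cut performed on line 5 of Algorithm \ref{algo:optimizeDP} discards only points whose function value is at least $f(x^{(k)})$, so the invariant $x^{*} \in C^{(k)}$ is preserved; moreover, $\ell := \ball(x^{*}, \varepsilon/(2L))$ is a subset of every $C^{(k)}$ as long as no center yet satisfies $f(x^{(k)}) \leq f(x^{*}) + \varepsilon/2$ (by Lipschitzness, every point in $\ell$ has value at most $f(x^{*}) + \varepsilon/2$). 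Lemma 1 further gives the volume shrinkage $\mathrm{Vol}(C^{(k+1)}) \leq \mathrm{Vol}(C^{(k)})\, e^{-1/(8(n+1))}$, so after $K = \lceil 8n(n+1)\log(2R_{C}L/\varepsilon)\rceil$ iterations the enclosing ellipsoid has volume strictly less than $\mathrm{Vol}(\ell) = \mathcal{V}_{n}(\varepsilon/(2L))^{n}$, contradicting $\ell \subseteq C^{(K+1)}$. Hence some $x^{(k)} \in X$ satisfies $f(x^{(k)}) \leq f(x^{*}) + \varepsilon/2$, and \textsc{Compare-DP} with tolerance $\varepsilon/2$ returns $x'$ with $f(x') \leq \min_{x \in X} f(x) + \varepsilon/2 \leq f(x^{*}) + \varepsilon$.

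For the query count, I would handle \textsc{PD-DP} and \textsc{Compare-DP} separately. In \textsc{PD-DP}, each pass of the while loop makes exactly $n$ oracle queries, and by Lemma 2 the semi-vertical angle satisfies $\sin(\gamma_{t+1}) \leq \sin(\gamma_{t})\sqrt{(n-1)/n}$; starting from $\sin(\gamma_{0}) = 1$ and terminating at $\sin(\gamma) \leq 1/(2n)$, the number of passes is at most $\lceil 2n\log(2n)\rceil$ using the bound $\log((n-1)/n) \leq -1/n$. Since \textsc{PD-DP} is called $K$ times, the total \textsc{PD-DP} cost is at most $nK\lceil 2n\log(2n)\rceil$. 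In \textsc{Compare-DP}, the candidate set has size $|X| = K+1$ and shrinks by one per outer iteration, giving $K$ outer iterations; each outer iteration runs a bisection on a segment of length at most $2R_{C}$ until the length drops below $2\varepsilon/(Lm)$ with $m = K+1$, which takes $\log_{2}(R_{C}L(K+1)/\varepsilon)$ queries. Summing gives $K\log_{2}(R_{C}L(K+1)/\varepsilon)$ queries for \textsc{Compare-DP}, and adding the two contributions recovers the stated total.

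The main obstacle is making the bisection stopping criterion behave correctly across the set $X$: one must check that using the shared tolerance $2\varepsilon/(Lm)$ and then taking the final survivor really produces a point whose value is within $\varepsilon/2$ of $\min_{x \in X} f(x)$, as each merge introduces an additive Lipschitz error of $\varepsilon/m$ that must aggregate to at most $\varepsilon/2$ after the $K$ merges. Properly amortising this error requires tracking the best value surviving each round, but a clean induction on $|X^{*}|$ shows that the survivor $x^{*} \in X^{*}$ satisfies $f(x^{*}) \leq \min_{x \in X} f(x) + (m-1)\cdot \varepsilon/m \leq \min_{x \in X} f(x) + \varepsilon/2$ once the tolerance is chosen as in the algorithm. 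Everything else is routine bookkeeping around Lemmas 1 and 2 together with Lipschitzness.
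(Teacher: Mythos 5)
Your proposal follows essentially the same route as the paper: cone shrinkage from Lemma~\ref{lemma:smallergradientcones} to certify the \textsc{PD-DP} output, Lemma~\ref{lemma:ellipsoid} for the cut validity and the $e^{-1/(8(n+1))}$ volume decay, a Lipschitz lower bound on the volume of the $\varepsilon/2$-sublevel set to force a contradiction after $K$ iterations, and then an amortised analysis of \textsc{Compare-DP} plus the $\lceil 2n\log(2n)\rceil$-pass bound for \textsc{PD-DP} (using $\sin\gamma \leq ((n-1)/n)^{k/2} \leq e^{-k/(2n)}$), giving the stated query count. The only wobble is notational: in your \textsc{Compare-DP} sanity check the chain $(m-1)\cdot\varepsilon/m \leq \varepsilon/2$ is false as written for $m\geq 3$; what you mean (and what the paper's Lemma on \textsc{Compare-DP} says) is that \textsc{Compare-DP} is invoked with tolerance $\varepsilon/2$, so each of the $m-1$ merges loses at most $(\varepsilon/2)/m$ and the cumulative loss is $(m-1)(\varepsilon/2)/m < \varepsilon/2$.
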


The sample complexity and the correctness of Algorithm \ref{algo:optimizeDP} follows from Lemmas \ref{lemma:ellipsoid} and \ref{lemma:smallergradientcones}. The sample complexity using the directional-preference oracle is $\tilde{\mathcal{O}}(\dimension^2)$ of the classical ellipsoid algorithm. An invetable factor of $\mathcal{O}(\dimension)$ is required to query the function in all dimensions, i.e., to slice the cone of the possible gradient directions into hyperoctants. By Lemma \ref{lemma:smallergradientcones}, a factor of $\mathcal{O}(\dimension \log(\dimension))$ is due to the number of iterations of the gradient pruning algorithm. While the gradient pruning method is optimal when the semi-vertical angle of the possible gradient directions is large, it is suboptimal when the semi-vertical angle is close to $0$. One may improve the dependency of $\mathcal{O}(\dimension \log(\dimension))$ by treating this small angle regime differently.  We remark that optimization using the directional-preference oracle is still possible in the absence of smoothness. One can use the same optimization method with an oracle that outputs the sign of an arbitrary directional subgradient.

\subsubsection{Optimization using the comparator oracle}
The optimization algorithm that we provide for the comparator oracle is similar to the optimization algorithm for the directional preference oracle. To solve the optimization problem, we begin by using comparisons to infer the sign of the directional derivative,  i.e., we use the comparator oracle $\oracle{C}$ to infer the directional-preference oracle $\oracle{DP}$. Then, we approximately find the direction of the gradient using the signs of the directional derivatives.

\begin{figure}[b]
	\centering
	\begin{subfigure}[b]{.25\linewidth}
		\centering
		\begin{tikzpicture}[scale=0.6]    
		\draw[<->] (-2,0) -- (2,0);
		\draw [black] plot [smooth] coordinates {(-1.5,2) (-1,1.5) (0,1) (1,0.7) (1.5,0.6)};
		\filldraw[black] (-1,0) circle (1.5pt) node[anchor=north] {$x_{l}$};
		\filldraw[black] (0,0) circle (1.5pt) node[anchor=north] {$x_{m}$};
		\filldraw[black] (1,0) circle (1.5pt) node[anchor=north] {$x_{r}$};
		\draw[dotted] (-1.5, 1.5) -- (1.5, 1.5); 
		\draw[dotted] (-1.5, 1) -- (1.5, 1); 
		\draw[dotted] (-1.5, 0.7) -- (1.5, 0.7); 
		\end{tikzpicture}
		\caption{}
		\label{fig:threecases:case1}
	\end{subfigure}%
	\quad \
	\begin{subfigure}[b]{.25\linewidth}
		\centering\begin{tikzpicture}    [scale=0.6] 
		\draw[<->] (-2,0) -- (2,0);
		\draw [black] plot [smooth] coordinates {(-1.5,0.6) (-1,0.7) (0,1) (1,1.5) (1.5,2)};
		\filldraw[black] (-1,0) circle (1.5pt) node[anchor=north] {$x_{l}$};
		\filldraw[black] (0,0) circle (1.5pt) node[anchor=north] {$x_{m}$};
		\filldraw[black] (1,0) circle (1.5pt) node[anchor=north] {$x_{r}$};
		\draw[dotted] (-1.5, 1.5) -- (1.5, 1.5); 
		\draw[dotted] (-1.5, 1) -- (1.5, 1); 
		\draw[dotted] (-1.5, 0.7) -- (1.5, 0.7); 
		\end{tikzpicture}
		\caption{}
		\label{fig:threecases:case2}
	\end{subfigure}
	\quad \
	\begin{subfigure}[b]{.25\linewidth}
		\centering
		\centering\begin{tikzpicture} [scale=0.6]    
		\draw[<->] (-2,0) -- (2,0);
		\draw (-1.5,2) parabola bend (0,0.7) (1.5,1.2);
		\filldraw[black] (-1,0) circle (1.5pt) node[anchor=north] {$x_{l}$};
		\filldraw[black] (0,0) circle (1.5pt) node[anchor=north] {$x_{m}$};
		\filldraw[black] (1,0) circle (1.5pt) node[anchor=north] {$x_{r}$};
		\draw[dotted] (-1.5, 1.2) -- (1.5, 1.2); 
		\draw[dotted] (-1.5, 0.9) -- (1.5, 0.9); 
		\draw[dotted] (-1.5, 0.7) -- (1.5, 0.7); 
		\end{tikzpicture}
		\caption{}
		\label{fig:threecases:case3}
	\end{subfigure}
	\caption{Possible orderings for a convex function at three points on a line. }
	\label{fig:threecases}
\end{figure}
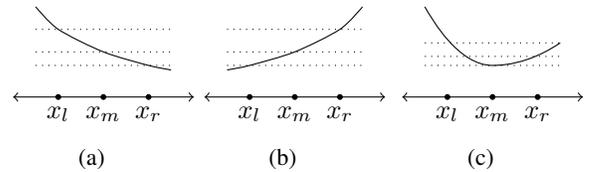

 Suppose function $g$ is in isotropic coordinates and we compare the function values at three points on a line, $x_{r}$, $x_{m}$, and $x_{l}$. We can get the directional derivative information at the middle point if the values of the function at  $x_{r}$, $x_{m}$, and $x_{l}$ are ordered as in Figures \ref{fig:threecases:case1} and \ref{fig:threecases:case2}. If the queried points are not ordered, i.e., the function value at $x_{m}$ is lower than or equal to the function values at both $x_{r}$ and $x_{l}$ as in Figure \ref{fig:threecases:case3}, the sign of the directional derivative is unknown at $x_{m}$. Function \textsc{FDD-C} takes the isotropic transformation information and outputs directional derivative information.

\begin{algorithm}[ht] 
	\caption*{\textbf{Function} \textsc{FDD-C}{($A, x_{0},  d, t$)}}
	\begin{algorithmic}[1]
		\State Query $\oracle{\convexset}(x - tA^{-1/2}d, x), \oracle{\convexset}(x, x+ tA^{-1/2}d)$.
		\State \textbf{if} {$\func(x - tA^{-1/2}d) \leq \func(x) \wedge \func(x ) \leq \func(x+ tA^{-1/2}d)$} \textbf{then}  \textbf{return} $1$.
		\State \textbf{else if} {$\func(x - tA^{-1/2}d) < \func(x) \wedge \func(x ) < \func(x+ tA^{-1/2}d)$} \textbf{then} \textbf{return} $-1$. 
		\State \textbf{else} \textbf{return} $unknown$. 
	\end{algorithmic}
\end{algorithm}

In cases when the sign of the directional derivative is unknown, we can use the smoothness of the objective function to bound the magnitude of the derivative as follows. In the case shown in Figure \ref{fig:threecases:case3}, there exists a point $x'$ such that $\innerproduct{\nabla \funciso(x')}{x^{r} - x^{l}} = 0$ and $x' = \alpha x^{r} + (1-\alpha)x^{l}$ for some $\alpha \in [0,1]$. Due to the smoothness property, we have $\innerproduct{\nabla \funciso(x^{m})}{x^{r} - x^{l}} \leq \beta \norm{x^{r} - x^{l}}/2$. 

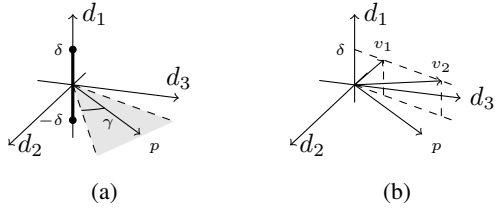
\begin{figure}[t]
	\centering
	\begin{subfigure}[b]{.4\linewidth}
		\centering
		\begin{tikzpicture}[scale=2.5,tdplot_main_coords]
		
		\coordinate (O) at (0,0,0);
		
		
		\tdplotsetcoord{pdone}{\pdoner}{\pdonetheta}{\pdonephi}
		\tdplotsetcoord{pdtwo}{\pdtwor}{\pdtwotheta}{\pdtwophi}
		\tdplotsetcoord{pdthree}{\pdthreer}{\pdthreetheta}{\pdthreephi}	
		\tdplotsetcoord{pdtwoanti}{\pdtwoantir}{\pdtwoantitheta}{\pdtwoantiphi}
		\tdplotsetcoord{pdthreeanti}{\pdthreeantir}{\pdthreeantitheta}{\pdthreeantiphi}	
		
		\tdplotsetcoord{pdmid}{\pdmidr}{\pdmidtheta}{\pdmidphi}	
		\tdplotsetcoord{pdmidun}{\pdmidunr}{\pdmiduntheta}{\pdmidunphi}
		
		\draw[->] (-0.2,0,0) -- (1,0,0) node[anchor=west]{$d_{2}$};
		\draw[->] (0,-0.2,0) -- (0,0.6,0) node[anchor=south]{$d_{3}$};
		\draw[->] (0,0,-0.3) -- (0,0,0.4) node[anchor=west]{$d_{1}$};
		
		\fill[fill=gray!20] (0,0,0)--(1,0.5,0)--(0.35,0.7,0);
		\draw[dashed] (0,0,0) -- (1,0.5,0);
		\draw[->] (0,0,0) -- (0.6,0.6,0) node[anchor=north west] {\tiny $p$};
		\draw[dashed] (0,0,0) -- (0.35,0.7,0);

		\tdplotdrawarc{(O)}{0.4}{26.5651}{45}{anchor=north west}{\tiny $\gamma$}	
		\draw[very thick] (0,0,-0.2) -- (0,0,0.2);
		\filldraw[black] (0,0,-0.2)  circle (0.5pt) node[anchor=east] {\tiny $-\delta$};
		\filldraw[black] (0,0,0.2)  circle (0.5pt) node[anchor=east] {\tiny  $ \delta$};
		
		\end{tikzpicture}
		\caption{}
		\label{fig:twocases:uncertainitysets}
	\end{subfigure}
	\quad \
	\begin{subfigure}[b]{.4\linewidth}
		\centering
		\begin{tikzpicture}[scale=2.5,tdplot_main_coords]
		
		\coordinate (O) at (0,0,0);
		
		
		\tdplotsetcoord{pdone}{\pdoner}{\pdonetheta}{\pdonephi}
		\tdplotsetcoord{pdtwo}{\pdtwor}{\pdtwotheta}{\pdtwophi}
		\tdplotsetcoord{pdthree}{\pdthreer}{\pdthreetheta}{\pdthreephi}	
		\tdplotsetcoord{pdtwoanti}{\pdtwoantir}{\pdtwoantitheta}{\pdtwoantiphi}
		\tdplotsetcoord{pdthreeanti}{\pdthreeantir}{\pdthreeantitheta}{\pdthreeantiphi}	
		
		\tdplotsetcoord{pdmid}{\pdmidr}{\pdmidtheta}{\pdmidphi}	
		\tdplotsetcoord{pdmidun}{\pdmidunr}{\pdmiduntheta}{\pdmidunphi}
		
		\draw[->] (-0.2,0,0) -- (1,0,0) node[anchor=west]{$d_{2}$};
		\draw[->] (0,-0.2,0) -- (0,0.6,0) node[anchor=west]{$d_{3}$};
		\draw[->] (0,0,-0.1) -- (0,0,0.4) node[anchor=west]{$d_{1}$};
		
		\draw[->] (0,0,0) -- (0.6,0.6,0) node[anchor=north west] {\tiny $p$};
		\draw[dashed] (0,0,0) -- (0.35,0.7,0);

		\filldraw[black] (0,0,0.2)  circle (0pt) node[anchor=east] {\tiny $\delta$};
		
		\draw[dashed] (0,0,0.2) -- (0.35,0.7,0.2);
		\draw[dashed] (0.1,0.2,0) -- (0.1,0.2,0.2);
		\draw[dashed] (0.3,0.6,0) -- (0.3,0.6,0.2);
		\draw[->] (0,0,0) -- (0.1,0.2,0.2) node[anchor=south] {\tiny $v_{1}$};
		\draw[->] (0,0,0) -- (0.3,0.6,0.2) node[anchor=south] {\tiny $v_{2}$};
		\end{tikzpicture}
		\caption{}
		\label{fig:twocases:uncertainitysets2}
	\end{subfigure}
	
	\caption{Possible cases for Algorithm \ref{algo:optimizeC}. $(a)$ The uncertainty sets for the unknown direction, $d_{1}$, and the known directions, $d_{2}$ and $d_{3}$. $(b)$ Two possible cases for the gradient estimation in Algorithm \ref{algo:optimizeC}.}
	\label{fig:twocases}
\end{figure}

\begin{algorithm}[ht] 
	\caption*{\textbf{Function}  \textsc{PD-C}{$(x, \theta, A, t )$}}
	\begin{algorithmic}[1]
		\State Set $r = 1$, $\gamma = \pi/2$, $m=0$, $UD = \emptyset$, $p = e_{1}$.
		\While{$\gamma > \theta \wedge m < \dimension$}
		\State Set $\curlyset{d_{1}, \ldots, d_{m}} = UD$.
		\State Find $d_{i}$ such that $d_{m+1} = p$, $d_{i} \perp d_{j}$ for all $i\neq j \in [\dimension]$, and $\| d_{i} \| = 1$ for all $i \in [\dimension]$.
		\State  Set $\oracle{DP}(x, A^{-1/2}d_{i}) =$ \textsc{FDD-C}$(A, x_{0},  d, t)$ for all $i \in [\dimension]$.
		\If{$\exists i \in \curlyset{m+1, \ldots, \dimension}$, such that $\oracle{DP}(x, A^{-1/2}d_{i}) =unknown$}
		\State  Set $UD = UD \cup d_{i}$,  and $m= m+1$.
		\Else  
		\State Set $w_{i} =  d_{m+1} \oracle{DP}(x, A^{-1/2}d_{m+1}) \cos(\gamma) +  d_{i} \oracle{DP}(x, A^{-1/2}d_{i}) \sin(\gamma)$ for all $i \in [n]$.
		\State Set $p = \left( \sum_{i=m+1}^{\dimension} w_{i}/\dimension \right)/ \left\| \sum_{i=m+1}^{\dimension} w_{i}/\dimension  \right \|$, $\gamma = \cos^{-1}(\langle p, w_{m+2} \rangle)$, $r = \arcsin (\gamma)$. 
		\EndIf
		\EndWhile
		\State \textbf{if} $m\neq \dimension$ \textbf{then} \textbf{return} $p$, \textbf{else} \textbf{return} $e_{1}$.
	\end{algorithmic}
\end{algorithm}

The function \textsc{PD-C} prunes the cone of the possible gradient directions by inferring the directional derivative information on different sets of basis vectors. At each iteration, the algorithm starts with a cone of possible gradient directions. Based on the query results the algorithm identifies the unknown directions $UD$ and finds an approximate direction for the projection $Proj_{UD^{\bot}}(\nabla \funciso(x))$ of the gradient onto the span of the known directions. In the next iteration, the algorithm uses the $Proj_{UD^{\bot}}(\nabla \funciso(x))$ as the direction of the cone of the possible gradient directions. When the semi-vertical angle of the cone of the possible gradient directions is sufficiently small or the number of unknown directions is equal to the number of dimensions, the function returns the estimation for the direction of the gradient.

\begin{algorithm}[ht] 
	\caption{The optimization algorithm \textsc{Optimize-C($\suboptimality$)} for the comparator oracle } \label{algo:optimizeC}
	\begin{algorithmic}[1]
		\State Set  $\convexset^{(1)} = \convexset$. Find $\circumscribedellipsoid_{\convexset^{(1)}} = \ellipsoid(A^{(1)}, x^{(1)})$ of $\convexset^{(1)}$.
		\State Set  $X = \lbrace x^{(1)} \rbrace$, $K = \left \lceil 8\dimension(\dimension+1) \log\left( \frac{ \radiusofset{\convexset} \lipschitz}{\suboptimality}  \right)  \right \rceil$, $\kappa = \max\left( \frac{4}{4 \dimension - \sqrt{2}\dimension\sqrt{\frac{4\dimension^2 - 1}{4\dimension^2}}} , 1 \right).$
		\For{$ k =1 \ldots K$}
		\State Set $t^{(k)} = \frac{\min(\suboptimality, \sqrt{\eigenvalue_{\max}(A^{k})} )}{\kappa \dimension^{5/2} \max(\smoothness, 1) \max(\radiusofset{\convexset}, 1)}.$ 
		\State Set $p$ = \textsc{PD-C}$\left(  x^{(k)}, \arcsin\left(\frac{1}{2\sqrt{2}\dimension}\right), A^{(k)}, t^{(k)} \right).$
		\State Set $\convexset^{(k+1)} = \convexset^{(k)} \cap  \ellipsoid(A^{(k)}, x^{(k)}) \cap \isotransinverse{A^{(k)}}{x^{(k)}} \left(\curlyset{ x | \innerproduct{p / \norm{p}}{ x } \leq 1/(2\dimension) }\right)$.
		\State Find $\circumscribedellipsoid_{\convexset^{(k+1)}} = \ellipsoid(A^{(k+1)}, x^{(k+1)})$ of $\convexset^{(k+1)}$. 
		\State Set  $X = X \cup \lbrace x^{(k+1)} \rbrace$.
		\EndFor
		\State Find $x' = \min_{x \in X} \func(x)$ using $\oracle{C}$.
		\State \textbf{return} $x'$.
	\end{algorithmic}
\end{algorithm}

Algorithm \ref{algo:optimizeC}, used for optimization with the comparator oracle, has two steps in each iteration. In the first step, the algorithm identifies a candidate approximate gradient direction in the isotropic coordinates using the direction pruning function $\textsc{PD-C}$. In the second step, the algorithm performs a cut as in the classical ellipsoid method. 

In order to find a near-optimal point, the algorithm exploits the fact that the direction of the projection of the gradient onto the linear subspace $Span(UD)^{\bot}$ of $\Rdim$ is approximately correct, and the magnitude of the projection $\norm{Proj_{UD}(\nabla \funciso(x))} $ of the gradient onto the complement subspace is small. For example, in Figure 	\ref{fig:twocases:uncertainitysets}, direction $d_{1}$ is the unknown direction and $\norm{Proj_{UD}(\nabla \funciso(x))} \leq \delta$. Directions $d_{2}$ and $d_{3}$ are the known directions and $\angle(Proj_{UD}(\nabla \funciso(x)), p) \leq \gamma$. There are two possible cases:
\begin{enumerate}
    \item The angle between $Proj_{UD^{\bot}}(\nabla \funciso(x))$ and $\nabla \funciso(x)$ is sufficiently small. \label{cases:correctangle}
    \item The angle between $Proj_{UD^{\bot}}(\nabla \funciso(x))$ and $\nabla \funciso(x)$ is not sufficiently small. \label{cases:smallgradient}
\end{enumerate}
Case \ref{cases:correctangle} happens if $\norm{Proj_{UD^{\bot}}(\nabla \funciso(x))}$ is large enough. In this case, the estimation for the direction of the gradient $\nabla \funciso(x)$ is approximately correct since the estimation $p$ for the direction of $Proj_{UD^{\bot}}(\nabla \funciso(x))$ is approximately correct. In this case, the ellipsoid algorithm proceeds normally. For example, if $\nabla \funciso(x) = v_{2}$ in Figure \ref{fig:twocases:uncertainitysets2}, then $\angle(\nabla \funciso(x), p)$ is small enough, say less than $\arcsin(1/(2\dimension))$. If Case \ref{cases:smallgradient} happens, the gradient approximation is not accurate, i.e., $\angle(\nabla \funciso(x), p)$ might be larger than $\arcsin(1/(2\dimension))$. However, if  Case \ref{cases:smallgradient} happens, it implies that $\norm{Proj_{UD^{\bot}}(\nabla \funciso(x))}$ is not large enough compared to $\norm{Proj_{UD}(\nabla \funciso(x))}$. Consequently, the magnitude $\norm{\nabla \funciso(x)}$ of the gradient is not large, say less than $\suboptimality/(\dimension \radiusofset{\convexset})$. For example, if $\nabla \funciso(x) = v_{1}$ in Figure \ref{fig:twocases:uncertainitysets2}, then $\norm{\nabla \funciso(x)}$ is small enough. We carefully choose the sampling distance so that the current ellipsoid center $x$ is near optimal if $\norm{Proj_{UD^{\bot}}(\nabla \funciso(x))}$ is not large enough. Algorithm \ref{algo:optimizeC} is agnostic to whichever case happens: The algorithm always assumes that the direction estimation approximately correct. However, the output point is near optimal since we compare the ellipsoid centers and output the best point before the termination.

\begin{theorem} \label{theorem:C} Let $K =  \left \lceil 8 \dimension (\dimension + 1) \log\left( \frac{ \radiusofset{\convexset} \lipschitz}{\suboptimality}  \right)  \right \rceil.$
For an $\lipschitz$-Lipschitz, $\smoothness$-smooth, convex function $\func: \convexset \to \mathbb{R}$, Algorithm \ref{algo:optimizeC} makes at most \[  2\dimension\left \lceil 2n\log(2\sqrt{2}\dimension) + \dimension \right \rceil K +  K
	\] queries to $\oracle{C}$ and the output $x'$ of Algorithm \ref{algo:optimizeC} satisfies $\func(x') \leq \min_{x \in \convexset} \func(x) + \suboptimality$.
\end{theorem}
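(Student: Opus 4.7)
The plan is to separate the argument into a counting step for the query bound and a correctness step that exploits the two-case dichotomy sketched just before the theorem.

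First, for the query-complexity claim, I would bound the work done in a single call to \textsc{PD-C}. In each iteration of its while loop, \textsc{PD-C} invokes \textsc{FDD-C} in $n$ directions, and each \textsc{FDD-C} call costs two comparator queries, so one pass of the loop costs $2n$ queries. The loop terminates when either the semi-vertical angle $\gamma$ falls below $\arcsin(1/(2\sqrt{2}n))$ or $m=n$. In any pass where no new unknown direction is discovered, Lemma~\ref{lemma:smallergradientcones} contracts $\sin\gamma$ by a factor $\sqrt{(n-1)/n}$; solving $\sin(\pi/2)\,((n-1)/n)^{i/2}\le 1/(2\sqrt2\,n)$ yields $i\le\lceil 2n\log(2\sqrt{2}n)\rceil$ such contracting passes. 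Since at most $n$ additional passes can add unknown directions before $m=n$, the while loop executes at most $\lceil 2n\log(2\sqrt{2}n)+n\rceil$ times. Multiplying by $2n$ gives the per-call budget, multiplying by $K$ gives the cost across outer iterations, and adding the $K$ comparator queries needed to return the minimum of the $K+1$ centers in $X$ via a tournament yields the stated bound $2n\lceil 2n\log(2\sqrt{2}n)+n\rceil K+K$.

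For correctness, I would work in isotropic coordinates at iteration $k$ with $g=f\circ\isotransinverse{A^{(k)}}{x^{(k)}}$ and prove the key dichotomy: either (i) $\angle(\nabla g(x^{(k)}),p)\le\arcsin(1/(2n))$, so Lemma~\ref{lemma:ellipsoid} applies and the ellipsoid shrinks by $e^{-1/(8(n+1))}$ while every point with value $\le f(x^{(k)})$ (in particular $x^*$) is preserved; or (ii) $f(x^{(k)})\le f(x^*)+\suboptimality$. The engine for this dichotomy is the guarantee produced by \textsc{PD-C}: by the same angle-shrinking analysis used in Lemma~\ref{lemma:smallergradientcones}, the vector $p$ lies within $\arcsin(1/(2\sqrt{2}n))$ of $Proj_{UD^\perp}(\nabla g(x^{(k)}))$. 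Simultaneously, whenever \textsc{FDD-C} returns \emph{unknown} on a direction $d_i$, the three sampled values are not monotone along the line of length $2t^{(k)}$, so by $\smoothness$-smoothness of $g$ in isotropic coordinates one gets $|\innerproduct{\nabla g(x^{(k)})}{d_i}|\le \smoothness' t^{(k)}$, and summing over the at most $n$ unknown directions yields an explicit bound $\|Proj_{UD}(\nabla g(x^{(k)}))\|=O(n\smoothness' t^{(k)})$. Case~(i) is the regime where $\|Proj_{UD^\perp}(\nabla g(x^{(k)}))\|$ dominates this bound enough that triangle-inequality-for-angles pushes the total angle from $\nabla g$ to $p$ under $\arcsin(1/(2n))$; otherwise we are in Case~(ii), and convexity together with the ball assumption $\ellipsoid(\suboptimality I/\lipschitz,x^*)\subseteq\convexset$ turn the smallness of $\|\nabla g(x^{(k)})\|$ into $f(x^{(k)})-f(x^*)\le\suboptimality$. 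The definition of $t^{(k)}$ and the constant $\kappa$ are exactly tuned so these two cases cover all possibilities; this bookkeeping is where I expect the main effort.

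Having the dichotomy, the remainder is the classical ellipsoid argument adapted slightly. Suppose for contradiction that no center in $X$ is $\suboptimality$-optimal. Then by the dichotomy every iteration falls into Case~(i), so every cut preserves the ball $\ellipsoid(\suboptimality I/\lipschitz,x^*)$ (Lipschitzness plus convexity put this whole ball on the descent side of the correct cut, and Lemma~\ref{lemma:ellipsoid} says the algorithm's cut contains the correct cut). Thus $Vol(\convexset^{(K+1)})\ge Vol(\ellipsoid(\suboptimality I/\lipschitz,x^*))=(\suboptimality/\lipschitz)^{n}\volumeofunitball$. Affine invariance of volume ratios under the isotropic transformation and Lemma~\ref{lemma:ellipsoid} give $Vol(\convexset^{(K+1)})\le Vol(\circumscribedellipsoid_\convexset)e^{-K/(8(n+1))}\le \radiusofset{\convexset}^{n}\volumeofunitball\,e^{-K/(8(n+1))}$, and the choice $K=\lceil 8n(n+1)\log(\radiusofset{\convexset}\lipschitz/\suboptimality)\rceil$ makes this strictly smaller than $(\suboptimality/\lipschitz)^{n}\volumeofunitball$, a contradiction. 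Therefore some $x^{(k)}\in X$ satisfies $f(x^{(k)})\le f(x^*)+\suboptimality$, and the final comparator tournament returns such a point.
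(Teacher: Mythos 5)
Your proof follows the paper's argument essentially step for step: the query count comes from the same decomposition (at most $\lceil 2n\log(2\sqrt2 n)\rceil$ contracting passes plus $n$ unknown-detection passes per call to \textsc{PD-C}, times $2n$ queries per pass, times $K$ outer iterations, plus $K$ for the final comparison tournament), and the correctness argument is the same dichotomy the paper uses. The paper phrases its case analysis as three cases ($m=n$, $m\neq n$ with large gradient, and small gradient), but Cases~1 and~3 both conclude that $x^{(k)}$ is near-optimal, so your two-way split into ``angle eventually small enough to make Lemma~\ref{lemma:ellipsoid} applicable'' versus ``$\|\nabla g(0)\|$ small enough that the center is $\suboptimality$-optimal'' is the same structure; the paper's combination of \eqref{supp_eq:projontoAD2} and \eqref{supp_eq:projontoADneg} is exactly your triangle-inequality-for-angles step, and $\kappa$ and $t^{(k)}$ are tuned precisely to make that combination close. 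Your volume argument is the contrapositive of the paper's direct one but is equivalent.

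One slip worth fixing: $\ellipsoid(\suboptimality I/\lipschitz, x^*)$ is the ball of \emph{radius $\sqrt{\suboptimality/\lipschitz}$}, whose volume is $\volumeofunitball(\suboptimality/\lipschitz)^{n/2}$, not $\volumeofunitball(\suboptimality/\lipschitz)^{n}$. What you actually want (and what the paper uses) is the ball of radius $\suboptimality/\lipschitz$; Lipschitzness guarantees $\func\le\func(x^*)+\suboptimality$ on it, and its volume is indeed $\volumeofunitball(\suboptimality/\lipschitz)^{n}$, which matches the shrinkage target coming from $K=\lceil 8n(n+1)\log(\radiusofset{\convexset}\lipschitz/\suboptimality)\rceil$. (The paper's standing assumption $\ellipsoid(\suboptimality I/\lipschitz,x^*)\subseteq\convexset$ contains this smaller ball whenever $\suboptimality\le\lipschitz$, so the argument goes through.) Also, your $\|Proj_{UD}(\nabla g)\| = O(n\smoothness' t^{(k)})$ should really be $O(\sqrt{n}\,\smoothness t^{(k)})$ — you are taking a Euclidean norm over at most $n$ coordinates each bounded by $\smoothness t^{(k)}$, not a sum — and this $\sqrt{n}$ vs.\ $n$ is exactly what the $\dimension^{5/2}$ in $t^{(k)}$ and the factor $\kappa$ are calibrated against.
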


Theorem \ref{theorem:C} shows that using the comparator oracle we can find a near optimal point with $\tilde{\mathcal{O}}(\dimension^{4})$ queries, which is at the same order with the sample complexity of optimization using the directional preference oracle. We also remark that while the smoothness of the function is required to determine the sampling distance, the sample complexity is not dependent on the smoothness constant.

\subsubsection{Optimization using the value oracle}

The value oracle is more informative than the comparator and directional-preference oracles; we can query the function in orthogonal directions near the center point and estimate the gradient. In the limit, i.e., the sampling distance goes to $0$, the gradient estimate converges to the true gradient. 

Under the smoothness assumption, we can get a provably good approximation of the gradient with a finite sampling distance. Let $\funciso$ be a $\smoothness$-smooth function in the isotropic coordinates. Formally, we have $\funciso(x) - \funciso(y) - \smoothness \norm{x-y}^2/2 \leq \innerproduct{\nabla \funciso(x)}{x-y} \leq \funciso(x) - \funciso(y) + \smoothness \norm{x-y}^2/2.$

\begin{figure}[ht]
	\centering
	\begin{subfigure}{.12\textwidth}
		\centering
		\begin{tikzpicture} 
		\draw [fill=gray!20, gray!20] (-0.2,1.0) rectangle (2,1.4);
		\draw [fill=gray!20, gray!20] (0.7,-0.2) rectangle (1.1,2);
		\draw [fill=gray!70, gray!70] (0.7,1.0) rectangle (1.1,1.4);
		\filldraw[black] (0.3,0.4) circle (0.1pt) node[anchor=west] {$\hat{\nabla \func(x)}$};
		\draw[->] (0,0) -- (0.9,1.2);
		\draw (0.9,1.2) circle (0.283cm);
		\draw[->] (0,-0.2) -- (0,2)   node[anchor=west] {$d_{2}$};  
		\draw[->] (-0.2,0) -- (2,0)  node[anchor=south] {$d_{1}$};
		\end{tikzpicture}
		\caption{}
		\label{fig:hypercubeestimate}
	\end{subfigure}%
	\quad \quad
	\begin{subfigure}{.12\textwidth}
		\centering
		\centering	\begin{tikzpicture} 
		\coordinate (grad) at (0.9,1.2);
		\coordinate (max) at (1.405,1.275);
		\coordinate (max2) at (0.814,1.651);
		\coordinate (origin) at (0,0);
		\draw [fill=gray!20, gray!20] (-0.2,1.0) rectangle (2,1.4);
		\draw [fill=gray!20, gray!20] (0.7,-0.2) rectangle (1.1,2);
		\draw [fill=gray!70, gray!70] (0.7,1.0) rectangle (1.1,1.4);
		\draw[->] (0,0) -- (grad);
		\draw (grad) circle (0.282cm);
		\draw[dashed] (0,0) -- (max);
		\draw[dashed] (0,0) -- (max2);
		\draw[->] (0,-0.2) -- (0,2)   node[anchor=west] {$d_{2}$};  
		\draw[->] (-0.2,0) -- (2,0)  node[anchor=south] {$d_{1}$};
		\end{tikzpicture}
		\caption{}
		\label{fig:largegradient}
	\end{subfigure}
	\quad \quad
	\begin{subfigure}{.12\textwidth}
		\centering				\begin{tikzpicture} 
		\coordinate (grad) at (0.4,0.4);
		\coordinate (max) at (0.852,0.228);
		\coordinate (max2) at (0.212,0.828);
		\coordinate (origin) at (0,0);
		\draw [fill=gray!20, gray!20] (-0.2,0.2) rectangle (2,0.6);
		\draw [fill=gray!20, gray!20] (0.2,-0.2) rectangle (0.6,2);
		\draw [fill=gray!70, gray!70] (0.2,0.2) rectangle (0.6,0.6);
		\draw[->] (0,0) -- (grad);
		\draw (grad) circle (0.282cm);
		\draw[dashed] (0,0) -- (max);
		\draw[dashed] (0,0) -- (max2);
		\draw[->] (0,-0.2) -- (0,2)   node[anchor=west] {$d_{2}$};  
		\draw[->] (-0.2,0) -- (2,0)  node[anchor=south] {$d_{1}$};
		\end{tikzpicture}
		\caption{}
		\label{fig:smallgradient}
	\end{subfigure}
	\caption{Illustrations of possible cases for the gradient $\nabla \funciso(x)$. $(a)$ The light gray stripes are the uncertainty sets for the directional derivatives. The dark gray squares are the uncertainty sets for $\nabla \funciso(x)$ and, the circles overapproximate the uncertainty sets.  In $(b)$ and $(c)$, the angle between the empirical gradient $\hat{\nabla \funciso(x)}$ and the dashed lines is the maximum angle between $\hat{\nabla \funciso(x)}$ and $\nabla \funciso(x)$.}
	\label{fig:gradsmallorbig}
\end{figure}
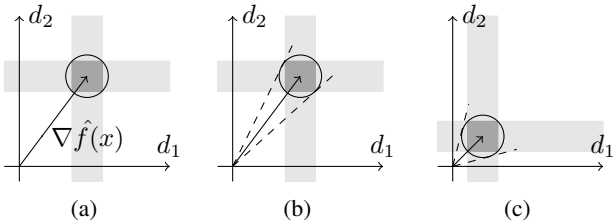

Assume that we sample the points that have a distance of $d$ from the center point in the isotropic coordinates. After $\dimension+1$ queries we can bound the gradient in a hypercube with  the edge length of $\smoothness d$. The hypercube can be contained in a hypersphere with radius $\smoothness \sqrt{\dimension} d /2$. For example, consider the case shown in Figure \ref{fig:hypercubeestimate}. Let $ \hat{\nabla \funciso(x)}$ be the empirical gradient estimate, i.e., the center of the hypercube. We can have two cases, either the gradient is in $ \cone(0,\arcsin(1/(2\dimension))$ or the magnitude of the gradient is smaller than $(2\dimension+1) \sqrt{\dimension}\smoothness d$. The former and latter cases are illustrated in Figures \ref{fig:largegradient} and \ref{fig:smallgradient}, respectively. In the latter case, if $d$ is sufficiently small, i.e., lower than $\suboptimality/((2\dimension+1)\sqrt{\dimension}\smoothness \radiusofset{\convexset})$, the center point is near optimal. Overall, the sample complexity of optimization using the value oracle with the ellipsoid method is $\tilde{\mathcal{O}}(\dimension^3)$. ~\cite{nemirovsky1983problem} provided a randomized optimization algorithm that succeeds with probability at least $1-\failureprob$ (where $\failureprob$ can be chosen to be arbitrarily small) and has a sample complexity of $\tilde{\mathcal{O}}(\dimension^{3})$ for Lipschitz continuous, convex functions. With an additional smoothness assumption, the method that we describe deterministically succeeds with the same complexity. We also remark that these bounds are inferior to the $\mathcal{O}(\dimension^{2})$ sample complexity result given in ~\cite{lee2018efficient}.

\subsubsection{Optimization using the noisy-value oracle} \label{section:optimizeNV}

For the noisy-value oracle, we can use the same gradient direction estimation method as in the value oracle. Different from the value oracle, we also need to consider the stochasticity of the oracle outputs since the empirical estimate $(\oracle{NV}(x) - \oracle{NV}(y))/\norm{x-y}$  of directional derivative is a $2 \standarddev^2 / \norm{x-y}^2 $-subgaussian random variable. 

We need $\tilde{\mathcal{O}}( \standarddev^2 /(\smoothness^2 \norm{x-y}^4))$ samples to obtain a confidence interval of $\mathcal{O}(\smoothness \norm{x-y})$  for the directional derivative estimate. By letting $\norm{x-y} = \mathcal{O}\left( \suboptimality/(2(2\dimension+1)\sqrt{\dimension}\smoothness \radiusofset{\convexset}) \right) $, we can ensure either that the ellipsoid method proceeds normally or that the current ellipsoid center is near optimal. Overall, the sample complexity of optimization using this method is $\tilde{\mathcal{O}}(\dimension^{13}/\suboptimality^{4})$. We remark that Belloni et al.~(\citeyear{belloni2015escaping}) provived an algorithm that has $\tilde{\mathcal{O}}(\dimension^{7.5}/\suboptimality^{2})$ sample complexity and $\suboptimality$-suboptimality in expectation.

\section{A sub-linear regret algorithm for the noisy-value oracle} \label{section:regret}
The regret of an optimization algorithm measures the performance of the algorithm during optimization. Define $x_{i}$ as the query point at time $i$, and $\hat{\func}(x_{i})$ as the output of the oracle. For a given number of queries $\timehorizon$, the regret of an algorithm $\algo$ is $\regret{\algo}(T) = \sum_{t=1}^{\timehorizon} \func\left(\mathcal{A}\left( h_{t}  \right)\right) - \sum_{t=1}^{\timehorizon} \func(x^{*}) $ where  $h_{t} = (x_{0}, \hat{\func}(x_{0})) \ldots (x_{t-1}, \hat{\func}(x_{t-1}))$ is the history of the algorithm. As in the previous section, we assume that $x^{*}$ is an interior point of $\convexset$ such that $\ellipsoid(\timehorizon^{-0.25} \identity / \lipschitz, x^{*}) \subseteq \convexset$.

The optimization algorithm mentioned in the previous section incurs sublinear regret when $\suboptimality = \mathcal{O}(\timehorizon^{-0.2})$. However, this approach yields a regret that has high order dependencies on the other parameters since the algorithm only relies on finding a near-optimal point with a regret of $\mathcal{O}(\timehorizon^{-0.2})$ if the gradient estimation fails. We give Algorithm \ref{algo:regretNV} that incurs $\tilde{\mathcal{O}}( \dimension^{3.75} \radiusofset{\convexset}  \sqrt{\smoothness \standarddev } \timehorizon^{0.75})$ regret with high probability when $\timehorizon = \Omega(\dimension^3 \lipschitz^{4/3} \standarddev^2  +\lipschitz^4 \standarddev^6)$ and $\dimension \radiusofset{\convexset}, \smoothness, \lipschitz, \standarddev \geq 1$. Different from  the  optimization algorithm, Algorithm \ref{algo:regretNV}  does not find a near-optimal point if the gradient estimation fails. Instead, Algorithm \ref{algo:regretNV} finds a point with a regret that incurs the half of the regret of the previous query point. While this approach increases the number of queries for optimization purposes, it yields a low regret.

 Algorithm \ref{algo:regretNV} consists of three phases. In Phase 1, we start by limiting the current convex set with the circumscribing ellipsoid and apply the isotropic transformation. We query the oracle in every dimension at the center of the ellipsoid and at the points that are close to the center. Then, we estimate the gradient within a confidence interval and limit the possible gradient directions to a cone in the isotropic coordinates. There are two possible cases:
 \begin{enumerate}
     \item If the semi-vertical angle of the possible gradient directions is small enough, i.e., less than $\arcsin(1/(2\dimension))$, we cut the current ellipsoid, and start the process from the beginning using the remaining set.
     \item If the semi-vertical angle of the possible gradient directions is not small enough, we halve the sampling distance and confidence interval, and start querying with the new sampling distance and confidence interval. 
 \end{enumerate}

If Case 2 happens, it implies that the gradient at the current ellipsoid center has a small magnitude, and the regret of the next set of queries is low. If Case 1 happens sufficiently many times, then one of the ellipsoid centers is a near optimal point with low regret as in the classical ellipsoid method. After Case 1 happens sufficiently many times, the algorithm proceeds to Phase 2. In this phase, we compare the the ellipsoid centers and find an ellipsoid center with a low regret, i.e., $\mathcal{O}(\timehorizon^{-0.25})$. In Phase 3, we repeatedly query the  ellipsoid center with a low regret.

\begin{algorithm} [ht]
	\caption{The low regret algorithm \textsc{Regret-NV($\timehorizon, \failureprob$)} for the noisy value oracle } \label{algo:regretNV}
	\begin{algorithmic}[1]
		\State  Set $\convexset^{(1)} = \convexset$. Find $\circumscribedellipsoid_{\convexset^{(1)}} = \ellipsoid(A^{(1)}, x^{(1)})$.  Set $X = \lbrace x^{(1)} \rbrace$.
		\State Set $K = \left \lceil 8\dimension(\dimension + 1) \log\left(  2 \radiusofset{\convexset} \lipschitz \timehorizon^{0.25} \right)  \right \rceil$, $\tau = \ceilx{ 32 \standarddev^2  \dimension^4 \log\left( \frac{2}{\failureprob'} \right)}$, $\failureprob' = \frac{\failureprob}{4 \dimension K \log_{16}\left( \frac{15 \timehorizon}{2 \dimension} \right)}$.
		\For{$ k =1, \ldots, K$} \Comment{Phase 1}
		\State Set $d = \frac{\min\left(\sqrt{\eigenvalue_{\max}(A^{(k)})}, 1\right)}{2 \dimension  }$.
		\State Set $\Delta = \frac{d  \left( 2+ \smoothness\eigenvalue_{\max}(A^{(k)}) \right)}{2\eigenvalue_{\max}(A^{(k)})}$.
		\For{$i = 0, 1, \ldots$} \Comment{Case 2}
		\State Set $d_{i} = d/2^{i}$, $\Delta_{i} = \Delta/2^{i}$, $\tau_{i} = 2^{4i} \tau$
		\State Query $\tau_{i}$ times $\oracle{NV}(x^{(k)})$ and $\oracle{NV}(\isotransinverse{A^{(k)}}{ x^{(k)}}(de_{j}))$  for all $i \in [\dimension]$. 
		\State For every query point $x$, set $\hat{\oraclewithoutname}^{NV}(x)$ as the mean of queries for point $x$.
		\State Estimate the gradient $p$ using the mean values $\hat{\oraclewithoutname}^{NV}(x)$.
		\If{$\left( \| p\| > \sqrt{\dimension} \Delta_{i} \right) \wedge \left(\arcsin\left(\frac{\sqrt{\dimension} \Delta_{i}}{\|p\|}\right) \leq \arcsin\left(\frac{1}{2\dimension}\right)\right)$} \Comment{Case 1}
		\State Set $\convexset^{(k+1)} = \convexset^{(k)} \cap  \ellipsoid(A^{(k)}, x^{(k)}) \cap \isotransinverse{A}{x^{(k)}} \left(\curlyset{ x | \innerproduct{p / \norm{p}}{ x } \leq 1/(2\dimension) }\right)$.
		\State Find $\circumscribedellipsoid_{\convexset^{(k+1)}} = \ellipsoid(A^{(k+1)}, x^{(k+1)})$. 
		\State Set $X = X \cup \lbrace x^{(k+1)} \rbrace$.
		\State \textbf{break}
		\EndIf
		\EndFor
		\EndFor
		\State For all $x\in X$, query $\oracle{NV}(x)$, $\ceilx{32 \standarddev^2 \sqrt{\timehorizon} \log\left( \frac{2(K+1)}{\failureprob}\right) }$ times. Set $x'$ to the point with the highest empirical mean.  \Comment{Phase 2}
		\State Repeatedly query $\oracle{NV}(x')$. \Comment{Phase 3}
	\end{algorithmic}
	
\end{algorithm}

\begin{theorem} \label{theorem:regret}
 Let $K = \left \lceil 8\dimension(\dimension + 1) \log\left(  2 \radiusofset{\convexset} \lipschitz \timehorizon^{0.25} \right)  \right \rceil$, $\failureprob' = \failureprob/ \left( 4 \dimension K \log_{16}\left( \frac{15 \timehorizon}{2 \dimension} \right) \right)$, and $\tau = \ceilx{ 8\standarddev^2 \smoothness^2 \dimension^4 \log\left( \frac{2}{\failureprob'} \right)}$. For an $\lipschitz$-Lipschitz, $\smoothness$-smooth, convex function $\func: \convexset \to \mathbb{R}$, a given failure probability  $\failureprob >0$, and a time horizon $\timehorizon$, Algorithm \ref{algo:regretNV} has a regret of at most $ K\left(\radiusofset{\convexset} \lipschitz \tau   + 5  \timehorizon^{0.75}  \dimension^{-0.25}\max\left( \dimension \radiusofset{\convexset}, 1 \right)(1+\smoothness)   \tau^{0.25} \right) + 
 (K+1) \ceilx{32 \standarddev^2 \sqrt{\timehorizon} \log\left( \frac{2(K+1)}{\failureprob}\right) } \radiusofset{\convexset} \lipschitz + \timehorizon^{0.75}$
 with probability at least $1-\failureprob$.
\end{theorem}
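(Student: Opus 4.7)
The plan is to decouple the regret analysis into three stages corresponding to the three phases of Algorithm \ref{algo:regretNV}, with a single high-probability event governing the accuracy of every gradient estimate.

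First, I would establish the concentration skeleton. Each averaged oracle reading $\hat{\oraclewithoutname}^{NV}(x)$ over $\tau_{i}$ samples is $\sigma^{2}/\tau_{i}$-subgaussian. With the choice $\tau_{i} = 2^{4i}\tau$ and $\tau = \lceil 32 \standarddev^{2} \dimension^{4} \log(2/\failureprob')\rceil$, the subgaussian tail bound gives $|\hat{\oraclewithoutname}^{NV}(x) - \func(x)| \leq \Delta_{i}/(4\sqrt{\dimension})$ with probability at least $1 - \failureprob'$ per averaged query. Taking a union bound over the $K$ outer iterations, the $\dimension+1$ query points per inner step, and the at most $\log_{16}(15\timehorizon/(2\dimension))$ inner doublings yields the ``good event'' with probability at least $1 - \failureprob/2$; a second union bound handles the empirical means in Phase 2. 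On the good event the finite-difference gradient estimator, combined with $\smoothness$-smoothness as in the value-oracle analysis, localizes $\nabla \funciso(x^{(k)})$ to a hypercube of half-width $\Delta_{i}\sqrt{\dimension}/2$ around $p$, so $\nabla\funciso(x^{(k)}) \in \cone(p, \arcsin(\sqrt{\dimension}\Delta_{i}/\|p\|))$ whenever $\|p\| > \sqrt{\dimension}\Delta_{i}$.

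Next, I would analyze one outer iteration $k$. Two exclusive cases occur on the good event. If Case 1 fires at inner index $i$, Lemma \ref{lemma:ellipsoid} applies and the ellipsoid volume strictly contracts, so across the $K$ outer iterations the center with the smallest true value is $\suboptimality$-optimal with $\suboptimality = \Theta(\timehorizon^{-0.25})$, matching the ellipsoid-method bookkeeping used in Theorem \ref{theorem:C}. If Case 2 persists and forces a doubling, then either $\|p\| \leq \sqrt{\dimension}\Delta_{i}$ or the angle exceeds $\arcsin(1/(2\dimension))$; in either case a short geometric argument (the same dichotomy illustrated in Figure \ref{fig:gradsmallorbig}) shows $\|\nabla\funciso(x^{(k)})\|$ is upper-bounded by a constant multiple of $\Delta_{i}\sqrt{\dimension}$. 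Hence the excess value $\func(x^{(k)}) - \func(x^{*})$ is at most $\radiusofset{\convexset}\cdot \|\nabla\funciso(x^{(k)})\| = O(\radiusofset{\convexset}\sqrt{\dimension}\Delta_{i})$, and the regret incurred by the $\dimension\tau_{i}$ queries of that inner step is $O(\dimension\tau_{i}\radiusofset{\convexset}\sqrt{\dimension}\Delta_{i})$. Because $\tau_{i}\Delta_{i} = 2^{3i}\tau\Delta$ and the doubling stops once $d_{i} < T^{-3/4}/\mathrm{poly}(\dimension,\smoothness,\radiusofset{\convexset})$, summing the geometric series over $i$ and then over $k$ produces the $K \cdot 5 T^{0.75}\dimension^{-0.25}\max(\dimension\radiusofset{\convexset},1)(1+\smoothness)\tau^{1/4}$ term, while the terminating Case~1 step contributes the $K\radiusofset{\convexset}\lipschitz\tau$ term via Lipschitzness.

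Finally, I would bound Phases 2 and 3. In Phase 2, comparing the $|X|\leq K+1$ ellipsoid centers with $\lceil 32\standarddev^{2}\sqrt{\timehorizon}\log(2(K+1)/\failureprob)\rceil$ averaged queries each gives, via Hoeffding-type subgaussian concentration and a union bound (this is where the residual $\failureprob/2$ is spent), identification of an ellipsoid center $x'$ whose true gap from $\min_{x\in X}\func(x)$ is $O(\timehorizon^{-0.25})$; Lipschitzness turns each query into regret at most $\radiusofset{\convexset}\lipschitz$, yielding the middle term of the bound. In Phase 3, the chosen $x'$ therefore has gap $O(\timehorizon^{-0.25})$ from $\func(x^{*})$ (combining the Phase 1 $\suboptimality$-optimality with the Phase 2 estimation error), so the at most $\timehorizon$ remaining queries contribute regret at most $\timehorizon^{0.75}$. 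Summing the three contributions and noting the failure probabilities add to $\failureprob$ gives the theorem.

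The main obstacle is the inner-loop bookkeeping in Case 2: one must verify that the sequence of halvings in $(\Delta_{i}, d_{i})$ is geometrically summable in regret despite the quartic growth of $\tau_{i}$, and that the ``small-gradient'' implication on Case~2 steps is tight enough (constants and the $\sqrt{\dimension}$ factor) to yield the stated $\dimension^{3.75}$ dependence rather than something weaker. The rest is routine ellipsoid-method and subgaussian concentration arithmetic.
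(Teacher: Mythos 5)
Your proposal matches the paper's proof in structure and substance: the same three-phase decomposition, the same Case-1/Case-2 dichotomy inside the inner loop (angle small enough vs.\ gradient necessarily small), the same union bound over outer iterations, dimensions, and inner doublings, the same geometric-series argument exploiting $\tau_{i}\Delta_{i}=2^{3i}\tau\Delta$, and the same Hoeffding comparison for Phase~2 followed by the trivial $\timehorizon^{0.75}$ bound for Phase~3. The only issue is a miscalibration in the concentration step: you assert a per-point bound $|\hat{\oraclewithoutname}^{NV}(x)-\func(x)| \le \Delta_i/(4\sqrt{\dimension})$, which decays like $2^{-i}$; after dividing by $d_i = d/2^{i}$ to form the finite difference the $2^{-i}$ cancels and the resulting per-coordinate gradient error is \emph{constant} in $i$, eventually exceeding the target $\Delta_i$. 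The paper avoids this by concentrating the finite-difference quantity directly at scale $d_i/\min(\eigenvalue_{\max}(A),1) \sim 2^{-i}$, which (combined with the smoothness error $\smoothness d_i/2$) gives per-coordinate error exactly $\Delta_i$; equivalently, a per-point bound must decay like $4^{-i}$ (which $\tau_i = 2^{4i}\tau$ in fact delivers) rather than $2^{-i}$. Relatedly, the localizing ``hypercube of half-width $\Delta_i\sqrt{\dimension}/2$'' should have half-width $\Delta_i$ per coordinate (Euclidean radius $\sqrt{\dimension}\Delta_i$), consistent with the cone angle $\arcsin(\sqrt{\dimension}\Delta_i/\norm{p})$ you state two lines later. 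These are calibration errors in the sketch, not a missing idea; once the concentration is restated at the correct scale the rest of your argument goes through as in the paper.
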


For a given $\lipschitz$-Lipschitz, $\smoothness$-smooth function $\func:\convexset \to \mathbb{R}$, we can define $\func':\convexset' \to \mathbb{R}$ such that $\convexset' = \curlyset{x' | x' = x\sqrt{\smoothness}, x \in \convexset}$ and $\func'(\sqrt{\smoothness}x) = \func(x)$ for all $x \in \convexset$.  $\func'$ is $\lipschitz/\sqrt{\smoothness}$-Lipschitz, $1$-smooth, and $ \radiusofset{\convexset'} = \sqrt{\smoothness} \radiusofset{\convexset}$. If Algorithm \ref{algo:regretNV} operates with the parameters of $\func'$, the regret is $\tilde{\mathcal{O}}(\dimension^{3.75} \radiusofset{\convexset} \sqrt{\smoothness \standarddev} \timehorizon^{0.75})$  when $\timehorizon = \Omega(\dimension^3 \lipschitz^{4/3} \standarddev^2  + \lipschitz^4 \standarddev^6 )$ and $\dimension \radiusofset{\convexset}\sqrt{ \smoothness}, \lipschitz , \standarddev \geq 1$.

\section{Discussion}
We consider the problem of minimizing a smooth, Lipschitz, convex function using sub-zeroth-order oracles. We leverage the smoothness property of the objective function and build variants of the ellipsoid method based on gradient estimation. We show that the sample complexities of optimization using sub-zeroth-order oracles are polynomial.

We remark that the main concern of this paper is the sample complexity of the optimization problems. The computational and space complexities of the provided algorithms are the same as those of the classical ellipsoid method. However, we remark that finding the exact minimum volume circumscribing ellipsoid for an arbitrary convex set is computationally intractable. In practice, we can avoid the computation of the minimum volume ellipsoids by finding an approximate enclosing ellipsoid using a separation oracle and analytical expressions involving the ellipsoid found at the previous step~\cite{goldfarb1982modifications}. We note that in the case of the comparator and noisy-value oracles, we use a property of the minimum volume circumscribing ellipsoid to give optimality and regret guarantees. We can show that a similar property holds for the approximate ellipsoids through additional feasibility cuts. 

For the directional-preference and comparator oracles, since the sampling distance can be arbitrarily reduced, through small modifications in the presented algorithms, the given sample complexities can be achieved with polynomial time complexities. For the noisy-value oracle we expect that polynomial time complexities can be achieved while maintaining a polynomial regret in the number of dimensions.

\section*{Acknowledgements}
This work was supported in part by AFOSR FA9550-19-1-0005, DARPA D19AP00004, NSF 1646522, and NSF 1652113.

\clearpage

\section*{Ethics Statement}
The results of this paper are theoretical. The impacts of these results are thus dependent on the chosen application area. 

Learning human preferences is one of the possible applications of the optimization algorithms that we provide. While humans may not be able to express their utility functions or want to share their utility functions, they can rank the available prospects or evaluate a given prospect. The optimization algorithms provided in this paper can optimize a system based on human feedback. This application would positively affect the quality of human life. On the other hand, gradient-free optimization methods have practically high sample complexities. The use of these optimization methods may overwhelm humans due to the high number of queries and raise privacy concerns due to the high amount of collected data. We remark that while the sample complexities are high, the optimization algorithms that we provide do not need to store the query results. Hence, these algorithms are robust against data breaches.

\bibliography{ref2}	

\newpage

\onecolumn
\begin{center}
\huge \textbf{Smooth Convex Optimization using Sub-Zeroth-Order Oracles}\\
\LARGE Supplementary Material
\end{center}

We give the proofs for the technical results in this document. We also include the preliminaries for self-containment.

\section{Preliminaries} \label{supp:section:prelims}
The unit vectors in $\Rdim$ are $e_{1}, \ldots, e_{\dimension}$. Let $S$ be a set of vectors in $\Rdim$. $Proj_{S}(x)$ denotes the orthogonal projection of $x$ onto the span of $S$ and $Proj_{S^{\bot}}(x)$ denotes the orthogonal projection of $x$ onto the complement space of the span of $S$. The angle between $x$ and $y$ is $\angle(x,y)$. $I$ denotes the identity matrix. The maximum and minimum eigenvalues of a square matrix $A$ is denoted by $\eigenvalue_{\max}(A)$ and $\eigenvalue_{\min}(A)$, respectively. The boundary of a set $D \in \Rdim$ is denoted by $Bd(D)$. The convex hull of a set $D$ of points is denoted by $Conv(D)$. With a slight abuse of notation, we use $0$ to denote the origin, i.e., $[0, \ldots, 0]^{\top} \in \Rdim$.

A convex function $\func: \convexset \to \mathbb{R}$ is said to be \textit{$\lipschitz$-Lipschitz} if $	\norm{\func(x) - \func(y)} \leq L \norm{x - y}$ for all $x,y \in \convexset$. A differentiable convex function $\func: \convexset \to \mathbb{R}$ is said to be \textit{$\smoothness$-strongly smooth} if $ |	\func(y) - \func(x) -  \innerproduct{\nabla \func(x)}{y-x}  | \leq  \smoothness \norm{y-x}^2 / 2 $ for all $x,y \in \convexset$.

	A \textit{right circular cone} in $\Rdim$ with \textit{semi-vertical angle} $\theta \in [0, \pi/2]$ and \textit{direction} $v \in \Rdim$ is $
	\cone(v, \theta) = \curlyset{w | W \in \Rdim, \angle(v,w) \leq \theta } $. 
	
	A \textit{ball} in $\Rdim$ is $\ball(r,x_{0}) =  \curlyset{x | \norm{x - x_{0}} \leq r} $
	where $x_{0} \in \Rdim$ and $r \geq 0$. The \textit{circumscribing ball} $\circumscribedball_{\convexset} = \ball(r^{*}, x^{*}_{0})$ of a compact convex set $\convexset$ satisfies $r^{*} = \min_{r^{*}, x_{0}} r$ where $\convexset \subseteq \ball(r, x_{0})$. The \textit{inscribed ball} $\inscribedball_{\convexset} = \ball(r^{*}, x^{*}_{0})$ of a compact convex set $\convexset$ satisfies $r^{*} = \max_{r^{*}, x_{0}} r$ where $\ball(r, x_{0}) \subseteq \convexset$. The \textit{radius} $\radiusofset{\convexset}$ of a compact convex set $\convexset$ is equal to the the radius of the circumscribing ball, i.e., $\radiusofset{\convexset} = \min_{y \in \convexset} \max_{x \in \convexset} \norm{x-y}.$  
	
	An \textit{ellipsoid} in $\Rdim$ is $		\ellipsoid(A,x_{0}) =  \curlyset{x | (x-x_{0})^{T} A^{-1} (x - x_{0}) \leq 1} $
	where $x_{0} \in \Rdim$ and $A \in \mathbb{R}^{n \times n}$ is a positive definite matrix. The \textit{isotropic transformation} $\isotrans{A}{x_{0}}$ of an ellipsoid $\ellipsoid(A, x_{0})$ is $\isotrans{A}{x_{0}}(x) = A^{-1/2}(x - x_{0})\sqrt{\eigenvalue_{\max}(A)}.$ The isotropic transformation repositions the ellipsoid at the origin and stretches the ellipsoid such that it becomes a hypersphere whose radius is equal to the largest radius of the ellipsoid. The inverse of $\isotrans{A}{x_{0}}$ is $\isotrans{A}{x_{0}}^{-1}(x) = A^{1/2}x/\sqrt{\eigenvalue_{\max}(A)} + x_{0}.$ With an abuse of notation we use $\isotrans{A}{x_{0}}(D)$ to denote the set $\curlyset{\isotrans{A}{x_{0}}(x) | x \in D }$. The \textit{circumscribing ellipsoid} $\circumscribedellipsoid_{\convexset} = \ellipsoid(A^{*}, x^{*}_{0})$ of a compact convex set $\convexset$ satisfies $\det(A^{*}) = \min_{A, x_{0}} \det(A)$ where $\convexset \subseteq \ellipsoid(A, x_{0}) $.	

	A  \textit{$\standarddev^2$-subgaussian} random variable $X$ with mean $\mu$ satisfies 
	$\Pr(|X - \mu| > t) \leq 2\exp\left( -t^2/(2\standarddev^2) \right)$ for all $t > 0$.
\section{Proofs for the technical results}
\setcounter{lemma}{0}
\setcounter{theorem}{0}
\setcounter{algorithm}{0}

We use Lemmas \ref{supp_lemma:ellipsoid} -- \ref{supp_lemma:feasibleinnerball} for the proofs of theorems given in the paper.

\begin{lemma} \label{supp_lemma:ellipsoid}
    Let $\func: \Rdim \to \mathbb{R}$ be a differentiable, convex function. For $\theta \in [0, \arcsin(1/\dimension)]$ and $p \in \Rdim$, if $\nabla \func(0) \in \cone(p, \theta)$, then $\func(x') \geq f(0)$ for all $x' \in \ellipsoid(I, 0) \cap 	 \curlyset{ x | \innerproduct{p / \norm{p}}{ x } > \sin\theta },$ and there exists an ellipsoid $\mathcal{E}^{*}$ such that $\mathcal{E}^{*} \supseteq \ellipsoid(I, 0) \cap 	 \curlyset{ x | \innerproduct{p / \norm{p}}{ x } \leq \sin\theta } $ and \[ \frac{Vol(\mathcal{E}^{*})}{Vol(\ellipsoid(I, 0)) } = \left( \frac{\dimension^2 (1- \sin^2(\theta))}{\dimension^2 - 1} \right)^{(\dimension-1)/2}  \frac{\dimension(1+\sin(\theta))}{\dimension+1} \]  If $\theta = \arcsin(1/(2\dimension))$, then \[Vol(\mathcal{E}^{*}) \leq Vol(\ellipsoid(I, 0)) e^{-\frac{1}{8(\dimension + 1)}} <Vol(\ellipsoid(I, 0)).\]
\end{lemma}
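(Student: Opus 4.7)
The plan is to split the lemma into its three assertions and handle each with a distinct tool: convexity for the infeasibility claim, rotational symmetry for the existence of the circumscribing ellipsoid, and a short calculation with the elementary bound $\log(1+x)\le x$ for the numerical estimate.

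For the first assertion, I would argue directly from convexity. Let $x'\in \ellipsoid(I,0)\cap\{x:\langle p/\|p\|,x\rangle>\sin\theta\}$. Since $\|x'\|\le 1$ and $\langle p/\|p\|,x'\rangle>\sin\theta$, the angle between $p$ and $x'$ satisfies $\cos\angle(p,x')=\langle p/\|p\|,x'\rangle/\|x'\|>\sin\theta$, so $\angle(p,x')<\pi/2-\theta$. Combined with $\angle(p,\nabla\func(0))\le\theta$ from the cone assumption and the triangle inequality on the unit sphere, this gives $\angle(\nabla\func(0),x')<\pi/2$, hence $\langle\nabla\func(0),x'\rangle>0$. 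Convexity of $\func$ then yields $\func(x')\ge\func(0)+\langle\nabla\func(0),x'-0\rangle\ge\func(0)$.

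For the second assertion, I would exploit the fact that the set $K:=\ellipsoid(I,0)\cap\{x:\langle p/\|p\|,x\rangle\le\sin\theta\}$ is symmetric under rotation about the axis through $0$ in the direction of $p$. Without loss of generality take $p=e_1$ and $s:=\sin\theta$. By the rotational symmetry, the minimum-volume circumscribing ellipsoid $\mathcal{E}^{*}$ must itself be rotationally symmetric about the $x_1$-axis, so I can parameterize it as $(x_1-c)^2/a^2+(x_2^2+\cdots+x_n^2)/b^2\le 1$ for unknowns $a,b>0$ and $c\le 0$. Requiring $\mathcal{E}^{*}$ to contain the two boundary features of $K$---the antipodal point $-e_1$ and the $(n{-}2)$-sphere $\{x_1=s,\ x_2^2+\cdots+x_n^2=1-s^2\}$---gives two equations that express $a$ and $b$ in terms of $c$. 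Minimizing the volume $ab^{n-1}$ in $c$ then becomes a one-variable calculus problem; differentiating yields the closed form $a=n(1+s)/(n+1)$ and $b^2=n^2(1-s^2)/(n^2-1)$, and the volume ratio stated in the lemma follows immediately. Tangency of this ellipsoid to $K$ at the extreme points guarantees containment, and standard Löwner–John uniqueness arguments confirm minimality.

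For the third assertion, I would substitute $s=1/(2n)$ into the formula and take logarithms:
\[
\log\!\frac{Vol(\mathcal{E}^{*})}{Vol(\ellipsoid(I,0))}=\log\!\left(1-\tfrac{1}{2(n+1)}\right)+\tfrac{n-1}{2}\log\!\left(1+\tfrac{3}{4(n^2-1)}\right).
\]
Applying $\log(1-x)\le -x$ to the first term and $\log(1+x)\le x$ to the second gives the bound $-1/(2(n+1))+3/(8(n+1))=-1/(8(n+1))$, as required; exponentiating finishes the proof.

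The only place that needs care is the MVEE construction: one must verify that the candidate ellipsoid actually contains all of $K$ (not just the two boundary features it is tangent to) and that it is the unique minimizer. Containment follows because $\mathcal{E}^{*}$ and the unit ball agree on the two distinguished sets above, and a rotationally symmetric convex body sandwiched between them behaves monotonically; uniqueness follows from strict convexity of $\log\det$ in the Löwner–John problem. Aside from this, the argument is largely mechanical calculation.
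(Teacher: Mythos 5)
Your decomposition into three assertions mirrors the paper's, and the arguments for the first (convexity plus the angle triangle inequality) and third (using $\log(1\pm x) \le \pm x$, which is the same as the paper's $1+x \le e^x$) are correct and essentially the same as the paper's. For the second assertion you take a genuinely different route: the paper simply cites Theorem~2.1 of \cite{goldfarb1982modifications} for the shallow-cut ellipsoid, whereas you propose to rederive it via rotational symmetry about the cut normal and a one-variable volume optimization. Your closed forms $a = n(1+s)/(n+1)$ and $b^2 = n^2(1-s^2)/(n^2-1)$ are correct and reproduce the stated volume ratio, so the route is legitimate and more self-contained than a citation.

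The gap is the containment verification, which you flag but do not actually perform; ``a rotationally symmetric convex body sandwiched between them behaves monotonically'' is not an argument. What is actually needed: after reducing by rotational symmetry to the half-plane of $(x_1, r)$ with $r^2 = x_2^2 + \cdots + x_n^2$, the cross-sectional radius-squared of the cap is $1 - x_1^2$ while that of the candidate ellipsoid is $b^2\bigl(1 - (x_1-c)^2/a^2\bigr)$; both are concave quadratics in $x_1$, they agree at $x_1 = -1$ and at $x_1 = s$ by your tangency conditions, and a direct computation gives $b^2 \ge a^2$ (equivalent to $sn \le 1$, which holds on the whole range $\theta \in [0, \arcsin(1/n)]$), so the difference of the two quadratics has nonpositive leading coefficient and vanishes at the two endpoints, hence is nonnegative on $[-1, s]$. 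This yields the containment; you should either carry out this calculation or cite Goldfarb--Todd as the paper does. Finally, the L\"owner--John uniqueness discussion is superfluous: the lemma asserts only the existence of an ellipsoid with the stated volume ratio, not that it is the minimizer.
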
 

\begin{proof} [Proof of Lemma \ref{supp_lemma:ellipsoid}]
	We first show that if $\nabla \func(x') \in \cone(p, \theta)$, then $\func(x') \geq f(x)$ for all $x \in D = \ellipsoid(I, x') \cap 	 \curlyset{ x | \innerproduct{p / \norm{p}}{ x } \leq \sin\theta }.$ By the convexity of $\func$, we have $\func(0) \geq \func(x) - \innerproduct{\nabla \func(0)}{x}$ for all $x \in \Rdim$. Since $\nabla \func(0) \in \cone(p, \theta)$ and $\innerproduct{x}{y} \geq 0 $ for all  $x \in \cone(p, \pi/2- \theta)$ and $y \in \cone(p, \theta)$, we have $\func(0) \leq \func(x) - \innerproduct{\nabla \func(0)}{x} \leq \func(x)$ for all $x \in \cone(p, \pi/2- \theta)$. Since $\ellipsoid(I, 0) \cap 	 \curlyset{ x | \innerproduct{p / \norm{p}}{ x } > \sin\theta } \subset \cone(p, \pi/2- \theta)$, we have $\func(0) \leq \func(x) - \innerproduct{\nabla \func(0)}{x} \leq \func(x)$ for all $x \in\ellipsoid(I, 0) \cap 	 \curlyset{ x | \innerproduct{p / \norm{p}}{ x } > \sin\theta }$.
	
	By Theorem 2.1 of \cite{goldfarb1982modifications}, there exists an ellipsoid $\mathcal{E}^{*}$ such that $\ellipsoid^{*} \supseteq \ellipsoid(I, 0) \cap 	 \curlyset{ x | \innerproduct{p / \norm{p}}{ x } \leq \sin\theta }$ and \[ Vol(\mathcal{E}^{*}) = Vol(\ellipsoid(I, 0)) \left( \frac{\dimension^2 (1- \sin^2(\theta))}{\dimension^2 - 1} \right)^{(\dimension-1)/2}  \frac{\dimension(1+\sin(\theta))}{\dimension+1} < Vol(\ellipsoid(I, 0)). \] 
	Setting $\theta = \arcsin(1/(2\dimension))$, we get \[ \frac{Vol(\mathcal{E}^{*})}{Vol(\ellipsoid(I, 0))}  =  \left( \frac{4\dimension^2 - 1}{4\dimension^2 - 4} \right)^{(\dimension-1)/2}  \frac{2\dimension+1}{2\dimension+2} = \left( 1+ \frac{3}{4\dimension^2 - 4} \right)^{(\dimension-1)/2} \left( 1-  \frac{1}{2\dimension+2} \right) . \] By the inequality $1+x \leq e^{x}$, we have \[ \frac{Vol(\mathcal{E}^{*})}{Vol(\ellipsoid(I, 0))}  \leq  e^{\frac{3(\dimension-1)}{2(4\dimension^2 - 4)}} e^{-\frac{1}{2\dimension + 2}} = e^{-\frac{1}{8(\dimension+1)}}  . \] 
\end{proof}

\begin{lemma} \label{supp_lemma:smallergradientcones}
	Let $\gamma \in (0, \pi/2],$ $d_{1} = e_{1},$ $d_{i} = \cos(\gamma)e_{1} + \sin(\gamma)e_{i},$ for all $ i\in \lbrace 2, \ldots, \dimension \rbrace$, $p = \sum_{i=1}^{\dimension} d_{i},$ and $\gamma' = \arccos(\langle p,d_{2} \rangle/ \|p\|) )$. 	Then, \(\mathcal{F} (p, \gamma' ) \supseteq \mathcal{F}(d_{1}, \gamma ) \cap \lbrace x | x_{i} \geq 0 \rbrace\) and \(\sin(\gamma')/\sin(\gamma) \leq \sqrt{\dimension-1} / \sqrt{\dimension}. \)
\end{lemma}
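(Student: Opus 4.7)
My strategy is to exploit the $(n-1)$-fold permutation symmetry in coordinates $2, \ldots, n$ and reduce both claims to explicit coordinate calculations. I would first write $p = (1+(n-1)\cos\gamma)e_1 + \sin\gamma \sum_{i=2}^n e_i$, giving $\|p\|^2 = (1+(n-1)\cos\gamma)^2 + (n-1)\sin^2\gamma$ and $\langle p, d_j\rangle = 1 + \cos\gamma + (n-2)\cos^2\gamma$ for every $j \in \{2,\ldots,n\}$; in particular $\cos\gamma' = \langle p, d_2\rangle/\|p\|$ is realized simultaneously by every $d_j$ with $j \geq 2$.

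For the containment, it suffices to show $\langle p, x\rangle/\|x\| \geq \langle p, d_2\rangle$ for every $x$ in $\mathcal{F}(d_1, \gamma) \cap \{x : x_i \geq 0\}$. After normalizing $\|x\| = 1$, this becomes a minimization of $\langle p, x\rangle$ over the sphere-slice $\{x : \|x\|=1,\ x_1 \geq \cos\gamma,\ x_i \geq 0 \text{ for } i \geq 2\}$. Since every coordinate of $p$ is nonnegative, I would first fix $x_1 = s \in [\cos\gamma, 1]$ and minimize the tail sum $\sum_{i \geq 2} x_i$ subject to $\sum_{i \geq 2} x_i^2 = 1-s^2$ and $x_i \geq 0$; from $(\sum x_i)^2 \geq \sum x_i^2$ for nonnegative $x_i$, this minimum equals $\sqrt{1-s^2}$ and is attained by concentrating all mass in a single coordinate. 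The remaining one-variable objective $h(s) = (1 + (n-1)\cos\gamma)s + \sin\gamma\sqrt{1-s^2}$ is strictly concave on $(-1,1)$ because $h''(s) = -\sin\gamma/(1-s^2)^{3/2} < 0$, so its minimum on $[\cos\gamma, 1]$ is at an endpoint; a direct check yields $h(\cos\gamma) - h(1) = -(n-2)\cos\gamma(1-\cos\gamma) \leq 0$, so the minimizer is $s = \cos\gamma$ with argmin vector $d_j$ for some $j \geq 2$. Hence $\min \langle p, x\rangle/\|x\| = \langle p, d_2\rangle$, which gives the containment (with the bound tight at $x = d_2$).

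For the angle-ratio bound, I would substitute the explicit expressions for $\|p\|^2$ and $\langle p, d_2\rangle^2$ into $\sin^2\gamma' = 1 - \langle p, d_2\rangle^2/\|p\|^2$ and verify, with $a = \cos\gamma$, the polynomial identity
\[
(n-1)\|p\|^2\sin^2\gamma - n(\|p\|^2 - \langle p, d_2\rangle^2) = \sin^2\gamma\,\cos\gamma\,(2 + (n-2)\cos\gamma),
\]
whose right-hand side is nonnegative for $\gamma \in (0, \pi/2]$ and $n \geq 2$. Rearranging gives $\sin^2\gamma'/\sin^2\gamma \leq (n-1)/n$, which is the claimed bound, with equality at $\gamma = \pi/2$.

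The containment step is structurally clean once one notices the permutation symmetry and the concavity of $h$. The real work lies in the polynomial identity for the angle ratio; the main obstacle is simply bookkeeping the degree-four expansion in $\cos\gamma$ on both sides without sign errors. No deeper ideas are required beyond the coordinate calculation and the symmetry reduction.
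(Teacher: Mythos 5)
Your proposal is correct, and it reaches both conclusions by a genuinely different route than the paper. For the containment, the paper parametrizes $q$ in the basis $\{d_i\}$, first argues (for fixed weight on $d_1$) that the minimum of $\langle p,q\rangle/(\|p\|\|q\|)$ is attained by placing all remaining weight on a single $d_i$, and then reduces to the two corners $q\in\{d_1,d_2\}$ by a linearity-in-$b$ argument applied to a rescaled vector $q'$; you instead work directly in standard coordinates on the unit sphere, reduce to a one-variable objective $h(s)$ by the elementary bound $\sum_{i\ge 2}x_i\ge\sqrt{1-s^2}$ for nonnegative tails, and finish by concavity of $h$ plus the endpoint comparison $h(\cos\gamma)\le h(1)$. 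This is cleaner and avoids the paper's slightly awkward detour through the unnormalized $q'$. For the angle-ratio bound, the paper proves $\sin\gamma'/\sin\gamma$ is nondecreasing in $\gamma$ by computing the sign of its derivative and then evaluates at $\gamma=\pi/2$; you instead verify the polynomial identity
\[
(n-1)\|p\|^2\sin^2\gamma - n\bigl(\|p\|^2-\langle p,d_2\rangle^2\bigr) = \sin^2\gamma\cos\gamma\bigl(2+(n-2)\cos\gamma\bigr),
\]
and I have checked that this identity holds (both sides equal $\cos\gamma\,(2+(n-2)\cos\gamma)(1-\cos^2\gamma)$ after expanding $\|p\|^2=n+2(n-1)\cos\gamma+(n-1)(n-2)\cos^2\gamma$ and $\langle p,d_2\rangle=1+\cos\gamma+(n-2)\cos^2\gamma$), with the right side nonnegative on $(0,\pi/2]$ for $n\ge 2$. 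The two routes involve comparable computation; your identity-based argument is more self-contained, while the paper's monotonicity argument is more conceptual (it identifies $\gamma=\pi/2$ as the extremal case).
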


\begin{proof}[Proof of Lemma \ref{supp_lemma:smallergradientcones} ] 
	We first show that $\mathcal{F} (p, \gamma' ) \supseteq \mathcal{F} (d_{1}, \gamma ) \cap \lbrace x | x_{i} \geq 0 \rbrace. $ It suffices to show that the semi-vertical angle $\gamma'$ of the new cone is larger than the angle between the direction $q$ of the new cone and any enclosed point. Formally, we need to show that $\gamma' \geq \max \arccos\left( \frac{\langle p, q \rangle}{ \|p\|\|q\|}  \right)$ where $q \in \mathcal{F} (d_{1}, \gamma ) \cap \lbrace x | x_{i} \geq 0 \rbrace.$ 
	
	Without loss of generality assume that $q = a d_{1} + \sum_{i=2}^{n} \sqrt{1-a^2} b_{i} d_{i} $ where $ 0 \leq a \leq 1$, $\sum_{2}^{n} b_{i}^2 = 1$, and $ 0 \leq b_{i} \leq 1$ for all $ i \in \curlyset{2, \ldots, n}.$ Note that this assumption only limits the scaling of $q$ such that $\norm{q} = 1$ and does not affect the maximum angle.
	
	We have \[\frac{\innerproduct{p}{q}}{\norm{p} \norm{q}} = \frac{a ((n-1) \cos(\gamma) + 1)}{n} + \left(\sqrt{1-a^2} \cos(\gamma) \frac{(n-1)\cos(\gamma) + 1}{n} \right)  \sum_{i=2}^{n}  b_{i}   + \frac{\sin^2(\gamma)}{n} \sum_{i=2}^{n} b_{i}. \]
	
	For a fixed value of $a$, $\frac{\innerproduct{p}{q}}{\norm{p} \norm{q}}$ is minimized, i.e., $\arccos\left( \frac{\innerproduct{p}{q}}{\norm{p} \norm{q}} \right)$ is maximized, when $b_{i} = 1$ for some $i \in \curlyset{2, \ldots, n}$ and $b_{j} = 0$ for others. In order to find the maximum value of $\arccos\left( \frac{\innerproduct{p}{q}}{\norm{p} \norm{q}} \right)$, without loss of generality we assume that $b_{2} = 1$, $b_{j} = 0$ for all $j \in \curlyset{2, \ldots, n}$. Therefore, there exists $q = a d_{1} + \sqrt{1-a^2} d_{2} $ such that $\arccos\left( \frac{\innerproduct{p}{q}}{\norm{p} \norm{q}} \right)$ is maximized.
	
	Define $q'$  such that $q' = b d_{1} + (1-b) d_{2}$ where $0 \leq b \leq 1$ and  $\arccos\left( \frac{\innerproduct{p}{q}}{\norm{p} \norm{q}} \right) = \arccos\left( \frac{\innerproduct{p}{q'}}{\norm{p} \norm{q'}} \right)$. Note that $q'$ is a scaled version of $q$, i.e., $q = q'/\norm{q'}.$
	
	We note that \[ \max_{q'} \arccos\left( \frac{\innerproduct{p}{q'}}{\norm{p} \norm{q'}} \right) \leq \max_{q'} \arccos\left(\frac{\innerproduct{p}{q'}}{\norm{p} } \right) \] since $\arccos(\alpha)$ is a non-increasing function of $\alpha$. We also note that $\arccos\left(\frac{\innerproduct{p}{q'}}{\norm{p} } \right)$ is maximized when $\innerproduct{p}{q'}$ is minimized and $\innerproduct{p}{q'}$ is a linear function of $b$ on the compact, convex set $0 \leq b \leq 1$. Therefore, there exists a corner point $b \in \curlyset{0, 1}$ such that  $\arccos\left(\frac{\innerproduct{p}{q'}}{\norm{p} } \right)$ is maximized.
	
	For $b =1$, we have $q' = q = d_{1}$ and  \[\langle p, d_{1} \rangle= \frac{1+(\dimension-1)\cos(\gamma)}{\dimension}. \] For $b =0$, we have $q' = q = d_{2}$ and \[\langle p, d_{2} \rangle = \frac{\cos(\gamma)+(\dimension-1)\cos^2(\gamma)+\sin^2(\gamma)}{\dimension} = \frac{1+ \cos(\gamma)+ (\dimension-2)\cos^2(\gamma)}{\dimension}  \] for all $i \in \lbrace 2, \ldots, \dimension\rbrace.$ Note that $\langle p, d_{2}\rangle \leq \langle p, d_{1}\rangle$ since $\cos(\gamma) \leq 1$.
	
	We consequently have $ \arccos\left( \frac{\langle p, d_{1} \rangle}{ \|p\|}  \right) \leq \arccos\left( \frac{\langle p, d_{2} \rangle}{ \|p\|}  \right)$ and $\arccos\left( \frac{\langle p, q \rangle}{ \|p\|}  \right)$ is maximized when $q = d_{2}$.	Therefore, \[\gamma' = \arccos\left( \frac{\langle p, d_{2} \rangle}{ \|p\|}  \right) =  \max_{q'} \arccos\left( \frac{\langle p, q \rangle}{ \|p\|}  \right) \geq \max_{q'} \arccos\left( \frac{\langle p, q \rangle}{ \|p\|\|q\|}  \right) =\max_{q} \arccos\left( \frac{\langle p, q \rangle}{ \|p\|\|q\|}  \right) \]  which implies that \[ \mathcal{F} (p, \gamma' ) \supseteq \mathcal{F} (d_{1}, \gamma ) \cap \lbrace x | x_{i} \geq 0 \rbrace.\] 
	
	We now prove that $\frac{\sin(\gamma')}{\sin(\gamma)} \leq \sqrt{\frac{\dimension-1}{\dimension}}.$ We have \begin{subequations}
		\begin{align*}
		\frac{\sin(\gamma')}{\sin(\gamma)} &= \frac{\sin \left( \arccos\left( \frac{\langle p, d_{2} \rangle}{\|p\| \|d_{2}\|} \right)  \right)}{\sin(\gamma)}
		\\
		&=  \frac{\sqrt{1 - \frac{\left( 1+ \cos(\gamma)+ (\dimension-2)\cos^2(\gamma) \right)^2}{\dimension^2 \left( \left(\frac{1+(\dimension-1)\cos(\gamma)}{\dimension}\right)^2 + (\dimension-1)\left( \frac{\sin(\gamma)}{\dimension} \right)^2 \right)}}}{\sin(\gamma)}
		\\
		&= \sqrt{\frac{(\dimension-2)^2 \cos^2(\gamma) + 2(\dimension-2)\cos(\gamma) + \dimension -1}{(\dimension-1)(\dimension-2)\cos^2(\gamma) + 2(\dimension-1)\cos(\gamma)+\dimension}}.
		\end{align*}
	\end{subequations}
	
	For $\gamma \in (0, \pi/2)$, we have 		
	\begin{subequations}
		\begin{align*}
		\frac{\partial }{\partial \gamma } \frac{\sin(\gamma')}{\sin(\gamma)} &= \frac{\sin(\gamma)((\dimension-2)\cos(\gamma) + 1)}{((\dimension-1)(\dimension-2)\cos^2(\gamma) + 2(\dimension-1)\cos(x) + \dimension)^2 \sqrt{\frac{(\dimension-2)^2 \cos^2(\gamma) + 2(\dimension-2)\cos(\gamma) + \dimension -1}{(\dimension-1)(\dimension-2)\cos^2(\gamma) + 2(\dimension-1)\cos(\gamma)+\dimension}}}
		\\
		&\geq 0,
		\end{align*}
	\end{subequations}
	i.e., $\frac{\sin(\gamma')}{\sin(\gamma)}$ is a non-decreasing function of $\gamma$. 
	
	Since  $\frac{\sin(\gamma')}{\sin(\gamma)} = \sqrt{\frac{\dimension-1}{\dimension}}$ when $\gamma = \pi/2$ and   $\frac{\sin(\gamma')}{\sin(\gamma)}$ is a non-decreasing function of $\gamma$, we conclude that $\frac{\sin(\gamma')}{\sin(\gamma)} \leq  \sqrt{\frac{\dimension-1}{\dimension}}.$
\end{proof}

\begin{lemma}\label{supp_lemma:ratioofellipsoidandcircle}
	Let $\convexset \in \Rdim$ be a compact convex set. The circumscribing ellipsoid $\circumscribedellipsoid_{\convexset} =  \ellipsoid(A^{*}_{\ellipsoid}, x^{*}_{0, \ellipsoid})$ and the radius $\radiusofset{\convexset}$ of $\convexset$ satisfies $\sqrt{\eigenvalue_{\max}(A^{*}_{\ellipsoid})} \leq \dimension \radiusofset{\convexset}.$ 
\end{lemma}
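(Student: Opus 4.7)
The plan is to invoke the (dual) John's theorem for the minimum-volume enclosing ellipsoid, which says that for any compact convex body $\convexset \subset \Rdim$ with nonempty interior, the minimum-volume enclosing ellipsoid $\circumscribedellipsoid_{\convexset} = \ellipsoid(A^{*}_{\ellipsoid}, x^{*}_{0,\ellipsoid})$ contains $\convexset$, while the ellipsoid obtained by shrinking $\circumscribedellipsoid_{\convexset}$ by a factor of $\dimension$ about its center is contained in $\convexset$. Concretely, this gives
\[
\ellipsoid\!\left(A^{*}_{\ellipsoid}/\dimension^{2},\, x^{*}_{0,\ellipsoid}\right) \subseteq \convexset.
\]

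From here the argument is short. First I would note that the maximum semi-axis of the shrunk ellipsoid $\ellipsoid(A^{*}_{\ellipsoid}/\dimension^{2}, x^{*}_{0,\ellipsoid})$ equals $\sqrt{\eigenvalue_{\max}(A^{*}_{\ellipsoid})}/\dimension$, so its smallest enclosing ball is exactly $\ball\!\left(\sqrt{\eigenvalue_{\max}(A^{*}_{\ellipsoid})}/\dimension,\, x^{*}_{0,\ellipsoid}\right)$. Second, by the definition of $\radiusofset{\convexset}$, there exists a ball $\ball(\radiusofset{\convexset}, y) \supseteq \convexset$, and since $\convexset$ contains $\ellipsoid(A^{*}_{\ellipsoid}/\dimension^{2}, x^{*}_{0,\ellipsoid})$, this same ball encloses the shrunk ellipsoid. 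Any ball enclosing that ellipsoid must have radius at least its maximum semi-axis, hence
\[
\radiusofset{\convexset} \;\geq\; \frac{\sqrt{\eigenvalue_{\max}(A^{*}_{\ellipsoid})}}{\dimension},
\]
which rearranges to the claim $\sqrt{\eigenvalue_{\max}(A^{*}_{\ellipsoid})} \leq \dimension \radiusofset{\convexset}$.

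The only nontrivial ingredient is John's theorem itself, which I would simply cite; everything else is a direct comparison of semi-axes and ball radii. A minor subtlety to handle is the case where $\convexset$ has empty interior (so no enclosing ellipsoid of positive volume exists in the strict sense), but this is easily dispatched by restricting to the affine hull of $\convexset$ and applying the same argument in lower dimension, or by noting that in such a degenerate case $\eigenvalue_{\max}(A^{*}_{\ellipsoid})$ can be taken to be arbitrarily small. No difficult computation is required.
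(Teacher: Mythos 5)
Your proof is correct and reaches the conclusion by a somewhat more direct route than the paper. Both arguments rest on the same key ingredient---the dual form of John's theorem, which says the L\"owner (minimum-volume enclosing) ellipsoid shrunk by a factor of $\dimension$ about its center lies inside $\convexset$. The difference is where the comparison happens. The paper first passes to isotropic coordinates, where the enclosing ellipsoid becomes the ball $\ball(\sqrt{\eigenvalue_{\max}(A^*)},0)$, applies John's theorem to get an inscribed ball of radius at least $\sqrt{\eigenvalue_{\max}(A^*)}/\dimension$ in $\convexset_0$, and then transports this back to the original coordinates by observing that the isotropic map preserves distances along the top eigenvector. You instead stay in the original coordinates: you shrink the John ellipsoid to $\ellipsoid(A^*/\dimension^2, x_0^*) \subseteq \convexset$, note that its longest semi-axis is $\sqrt{\eigenvalue_{\max}(A^*)}/\dimension$, and conclude that any ball containing $\convexset$ (in particular the circumscribing ball of radius $\radiusofset{\convexset}$) must have radius at least that semi-axis, since it must contain the shrunk ellipsoid whose diameter is $2\sqrt{\eigenvalue_{\max}(A^*)}/\dimension$. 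Your version avoids the isotropic-transformation bookkeeping entirely and, as a side benefit, sidesteps a small imprecision in the paper's final step (the paper exhibits two points at distance $r$ inside $\convexset$ and directly concludes $\radiusofset{\convexset}\geq r$, whereas that argument actually wants two antipodal points at distance $2r$ along the top eigendirection---harmless for the final inequality, but your diameter-based comparison is cleaner). Your remark on the degenerate (empty-interior) case is not needed in this paper's setting, where $\convexset$ is a full-dimensional feasible region, but it does no harm.
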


\begin{proof}[Proof of Lemma \ref{supp_lemma:ratioofellipsoidandcircle}]
	Let $\convexset_{0}$ be the convex set that is the isotropic transformation of $\convexset$, i.e., $\convexset_{0} = \lbrace x | \isotrans{A^{*}}{x^{*}_{0}}^{-1}(x) \in \convexset \rbrace$. Since $\ellipsoid(A^{*}_{\ellipsoid}, x^{*}_{0, \ellipsoid})$ is the circumscribing ellipsoid of $\convexset$, the circumscribing ellipsoid of $\convexset_{0}$ is  $\ball(\sqrt{\eigenvalue_{\max}(A^{*}_{\ellipsoid})},0))$ and equal to the circumscribing ball $\circumscribedball_{\convexset_{0}}$ of $\convexset_{0}$. Let $\inscribedball_{\convexset_{0}} = \ball(r,x_{0})$ be the inscribed ball of $\convexset_{0}$. Since $\convexset_{0}$ is convex, we have that $\sqrt{\eigenvalue_{\max}(A^{*}_{\ellipsoid})} \leq \dimension r$~\cite{henk2012lowner}.
	
	We note that the transformation $\isotrans{A^{*}}{x^{*}_{0}}$ preserves the distances between two point if the line passing through the points is parallel to the eigenvector that is associated with the largest eigenvalue of $A^{*}$. Since there exist points $x,y \in \ball(r,x_{0}) \subseteq C_{0}$ such that $\norm{x-y} = r$ and $x-y$ is parallel to the eigenvector that is associated with the largest eigenvalue of $A^{*}$, there exist two points in $\convexset$ such that the distance between the points is $r$. Therefore, the radius $\radiusofset{\convexset}$ of $\convexset$ satisfies  $\radiusofset{\convexset} \geq r$.
	
	By combining $\sqrt{\eigenvalue_{\max}(A^{*}_{\ellipsoid})} \leq \dimension r$ and $\radiusofset{\convexset} \geq r$, we get $\sqrt{\eigenvalue_{\max}(A^{*}_{\ellipsoid})} \leq \dimension \radiusofset{\convexset}.$

\end{proof}

\begin{lemma} \label{supp_lemma:feasibleinnerball}
	Let $\convexset \in \Rdim$ be a compact convex set and $\circumscribedellipsoid_{\convexset}  = \ellipsoid(A^{*}, x^{*}_{0})$ be the circumscribing ellipsoid of $\convexset$. If $x \in \ball(\eigenvalue_{\max}(A)/(2n), 0)$, then $\isotrans{A^{*}}{x^{*}_{0}}^{-1}(x) \in \convexset.$
\end{lemma}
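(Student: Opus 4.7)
The plan is to leverage John's theorem on minimum-volume enclosing ellipsoids and then transport the resulting inclusion through the affine isotropic transformation. By John's theorem, for any compact convex body $\convexset \subset \Rdim$, the Löwner--John ellipsoid $\ellipsoid(A^{*}, x_{0}^{*})$ enjoys the property $\ellipsoid(A^{*}/\dimension^{2}, x_{0}^{*}) \subseteq \convexset$; equivalently, scaling $\circumscribedellipsoid_{\convexset}$ by a factor $1/\dimension$ about its center produces an ellipsoid inscribed in $\convexset$. This is the one non-trivial external fact I would invoke, and the only delicate point is that the constant $1/\dimension$ (as opposed to the better $1/\sqrt{\dimension}$) is the one that holds without assuming central symmetry of $\convexset$.

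Next I would translate this inclusion into isotropic coordinates. Because $\isotrans{A^{*}}{x_{0}^{*}}(y) = (A^{*})^{-1/2}(y-x_{0}^{*})\sqrt{\eigenvalue_{\max}(A^{*})}$ is affine and commutes with scalings about $x_{0}^{*}$, a direct computation shows that it sends $\ellipsoid(A^{*}, x_{0}^{*})$ to the ball $\ball(\sqrt{\eigenvalue_{\max}(A^{*})}, 0)$ and, by the same scaling, sends $\ellipsoid(A^{*}/\dimension^{2}, x_{0}^{*})$ to the concentric ball $\ball(\sqrt{\eigenvalue_{\max}(A^{*})}/\dimension, 0)$. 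Applying $\isotrans{A^{*}}{x_{0}^{*}}$ to both sides of the John inclusion gives
\[
\ball\!\left(\sqrt{\eigenvalue_{\max}(A^{*})}/\dimension,\, 0\right) \subseteq \isotrans{A^{*}}{x_{0}^{*}}(\convexset).
\]

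To finish, I would observe that the ball in the lemma's hypothesis is contained in $\ball(\sqrt{\eigenvalue_{\max}(A^{*})}/\dimension, 0)$ (the factor of $2$ appearing in the stated radius provides harmless slack against John's constant). Hence any $x$ satisfying the hypothesis lies in $\isotrans{A^{*}}{x_{0}^{*}}(\convexset)$, and therefore $\isotrans{A^{*}}{x_{0}^{*}}^{-1}(x) \in \convexset$, as required. The main obstacle is pinning down John's theorem with the right constant for a general (not necessarily symmetric) convex body; after that, the rest is routine linear-algebraic bookkeeping of how $\isotrans{A^{*}}{x_{0}^{*}}$ acts on ellipsoids concentric with $\circumscribedellipsoid_{\convexset}$.
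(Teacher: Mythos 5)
Your proof is correct, and it takes a genuinely different route from the paper's. The paper argues by contradiction: letting $\convexset_{0} = \isotrans{A^{*}}{x^{*}_{0}}(\convexset)$, it considers the largest ball $\ball(r,0)$ centered at the origin inside $\convexset_{0}$, invokes the supporting hyperplane theorem at a contact point to conclude $\convexset_{0} \subseteq \curlyset{x \mid \innerproduct{x'/\norm{x'}}{x} \leq r}$, and then applies the paper's own Lemma~\ref{supp_lemma:ellipsoid} to show that if $r < \sqrt{\eigenvalue_{\max}(A)}/(2\dimension)$ one could construct a strictly smaller circumscribing ellipsoid — contradicting minimality. You instead invoke John's theorem as a black box: the Löwner ellipsoid shrunk by $1/\dimension$ about its own center is contained in $\convexset$, which in isotropic coordinates reads $\ball\bigl(\sqrt{\eigenvalue_{\max}(A^{*})}/\dimension, 0\bigr) \subseteq \isotrans{A^{*}}{x^{*}_{0}}(\convexset)$, and this subsumes the stated ball of radius $\sqrt{\eigenvalue_{\max}(A^{*})}/(2\dimension)$. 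Your computation of how $\isotrans{A^{*}}{x^{*}_{0}}$ acts on the concentric scaled ellipsoid is right, and you correctly use the non-symmetric constant $1/\dimension$ rather than $1/\sqrt{\dimension}$. What your approach buys is brevity and a strictly stronger conclusion (the factor $2$ is indeed slack); what the paper's approach buys is self-containment, reusing its own ellipsoid-cut machinery (Lemma~\ref{supp_lemma:ellipsoid}) rather than importing John's theorem — though the paper does already cite Löwner–John results in the proof of Lemma~\ref{supp_lemma:ratioofellipsoidandcircle}, so relying on John's theorem here would have been consistent with the paper's other arguments.
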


\begin{proof}[Proof of Lemma \ref{supp_lemma:feasibleinnerball}]
	We prove the statement by contradiction: If there exists an $x \in \ball(\eigenvalue_{\max}(A)/(2n), 0)$, such that $T^{-1}_{A^{*}, x^{*}_{0}}(x) \not\in \convexset$, then $\ellipsoid(A^{*}, x^{*}_{0})$ is not the circumscribing ellipsoid of $\convexset$.
	
	Let $\convexset_{0}$ be the convex set that is the isotropic transformation of $\convexset$, i.e., $\convexset_{0} = \lbrace x | T^{-1}_{A^{*}, x^{*}_{0}}(x) \in \convexset \rbrace$. Since the ratios of volumes is constant for affine transformations, the circumscribing ellipsoid of $\convexset_{0}$ is $\ball(\sqrt{\eigenvalue_{\max}(A)}, 0)$. 
	
	Let $\ball(r, 0)$ be the ball with the maximum radius centered at the origin such that $\ball(r, 0) \in \convexset_{0}$. Then, there must exists a point $x'$ such that $\|x'\| = r$ and $x' \in Bd(\convexset_{0})$.

	By the supporting hyperplane theorem~\cite{boyd2004convex} there exists a supporting hyperplane at $x$ such that the entire convex set $\convexset_{0}$ is on one side of the hyperplane. Let $\halfspace = \lbrace x | \langle h, (x - x') \rangle \leq 0 \rbrace$ be the halfspace that contains $\convexset_{0}$ and passes through $x'$. Assume that the hyperplane $\langle h, (x - x') \rangle = 0$ is not tangent to $\ball(r, 0)$, i.e., $h$ is not a multiple of $x'$, then we have $\halfspace \cap \ball(r, 0) \neq \emptyset$ and $(\Rdim \setminus \halfspace) \cap \ball(r, 0) \neq \emptyset$. Since $\ball(r, 0) \subseteq \convexset_{0}$, we also have $\halfspace \cap \convexset_{0} \neq \emptyset$ and $(\Rdim \setminus \halfspace) \cap \convexset_{0} \neq \emptyset$. Therefore, the supporting hyperplane must be tangent to $\ball(r, 0)$ at $x'$, i.e., $h$ must be a multiple of $x'$. Also since $\ball(r, 0) \subset \halfspace = \lbrace x | \langle h, (x - x') \rangle \leq 0 \rbrace$, $h$ must be a positive multiple of $x'$. Without loss of generality assume that $h = x'$. 
	
	We have $\convexset_{0} \subseteq \halfspace = \curlyset{x | \langle x', (x - x') \rangle \leq 0} $ where $\norm{x'} = r$. Assume that $r < \sqrt{\eigenvalue_{\max}(A)}/(2n)$. In this case, by Lemma \ref{supp_lemma:ellipsoid}, there exists a an ellipsoid whose volume is smaller than $\volumeofunitball \eigenvalue_{\max}(A)^{n/2}$. This leads to a contadiction as we know that the circumscribing ellipsoid of $\convexset_{0}$ is $\ball(\sqrt{\eigenvalue_{\max}(A)}, 0)$. Therefore, $r \geq \sqrt{\eigenvalue_{\max}(A)}/(2n)$.
	
	Since $\ball(r, 0) \subseteq \convexset_{0}$ and $r \geq \sqrt{\eigenvalue_{\max}(A)}/(2n) $, we have that if $x \in \ball(\sqrt{\eigenvalue_{\max}(A)}/(2n), 0)$, then $\isotrans{A^{*}}{x^{*}_{0}}^{-1}(x) \in \convexset.$
\end{proof}

\newpage

\subsection{Proof of Theorem \ref{supp_theorem:DP}}
To prove Theorem \ref{supp_theorem:DP}, we use Algorithm \ref{supp_algo:optimizeDP} which is similar to the optimization algorithm under the comparator oracle. Algorithm \ref{supp_algo:optimizeDP} estimates the gradient direction by at the current ellipsoid center by querying the directional derivatives function in different orthogonal directions. After the estimation of the gradient, Algorithm \ref{supp_algo:optimizeDP} proceeds to the ellipsoid cut. Before the algorithm terminates Algorithm \ref{supp_algo:optimizeDP} compares the ellipsoid centers and outputs a point that is near optimal. For the comparison step, we employ the function \textsc{Compare-DP}  which uses bisection search to find a near optimal point from a given set of points.

\begin{algorithm}[ht] 
	\caption*{\textbf{Function} \textsc{Compare-DP($X, \suboptimality$)}  } \label{supp_algo:DPcomparison}
	\begin{algorithmic}[1]
		\State Set $X^{*} = X$ and $m = |X|$.
		\While{$|X^{*}| > 1$}
		\State Arbitraritly pick $x^{1}, x^{2} \in X$ such that $x^{1} \neq x^{2}$.
		\State Set $X^{*} = X^{*} \setminus \lbrace x^{1}, x^{2} \rbrace$.
		\State Set $x^{l} = x^{1}$ and $x^{r} = x^{2}$.
		\While{$\| x^{r} - x^{l}\| \leq  2\suboptimality/(\lipschitz m)$}
		\State Query $\oracle{DP}((x^{r} + x^{l})/2, (x^{r} - x^{l})/2)$.
		\If{$\oracle{DP}((x^{r} + x^{l})/2, (x^{r} - x^{l})/2) = 0$}
		\State $x^{l} = (x^{r} + x^{l})/2.$
		\Else
		\State $x^{r} = (x^{r} + x^{l})/2.$
		\EndIf
		\EndWhile
		\State $X^{*} = X^{*} \cup \lbrace (x^{r} + x^{l})/2 \rbrace$
		\EndWhile
		\State \textbf{return} $x^{*} \in X^{*}$.	
	\end{algorithmic}
	
\end{algorithm}

\begin{lemma} \label{supp_lemma:compareDP}
	For an $\lipschitz$-Lipschitz function $f:\convexset \to \mathbb{R}$ and a set $X$ of points with size $m$, The function \textsc{Compare-DP} makes at most $(m-1) \log_{2} \frac{\radiusofset{\convexset} \lipschitz m}{2\suboptimality}$ queries to $\oracle{DP}$ and the output $X^{*}$ of the above algorithm satisfies $f(x^{*}) \leq \min_{x \in X} f(x) + \suboptimality$, $x^{*} \in \convexset,$ and $x^{*} \in X^{*}$. 
\end{lemma}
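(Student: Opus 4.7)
The plan is to analyze the outer and inner loops separately, bounding both the number of queries and the cumulative error, and then combine these bounds. The outer \textsc{while} loop starts with $|X^*| = m$ and each iteration removes two points and adds one, so it terminates after exactly $m-1$ iterations. Each such iteration runs an inner bisection on the segment $[x^1, x^2]$ whose goal is to locate (approximately) the minimizer of $f$ restricted to that segment. The key observation is that for the univariate convex restriction $\phi(t) = f(x^l + t(x^r - x^l))$, the query $\oracle{DP}((x^l+x^r)/2, (x^r-x^l)/2)$ returns the sign of $\phi'(1/2) = \innerproduct{\nabla f((x^l+x^r)/2)}{x^r - x^l}$; by convexity, $\phi'$ is monotonically non-decreasing, so this sign correctly identifies which half of the current bracket contains the minimizer. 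Thus the invariant ``the minimizer of $\phi$ over the original $[x^1, x^2]$ lies in $[x^l, x^r]$'' is maintained throughout the bisection.

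For the query count, I would observe that initially $\|x^r - x^l\| = \|x^1 - x^2\| \leq 2\radiusofset{\convexset}$ (since both points lie in $\convexset$), and each inner step halves the bracket length. The loop terminates as soon as $\|x^r - x^l\| \leq 2\suboptimality/(\lipschitz m)$, which happens after at most $\bigl\lceil \log_2(\radiusofset{\convexset} \lipschitz m / \suboptimality) \bigr\rceil$ steps. Summing over the $m-1$ outer iterations and absorbing the constant factor of $2$ inside the logarithm gives the claimed bound of $(m-1) \log_2\bigl( \radiusofset{\convexset} \lipschitz m / (2\suboptimality) \bigr)$ queries to $\oracle{DP}$.

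For correctness, I would use the bracketing invariant together with Lipschitz continuity. When the inner loop terminates, the bracket $[x^l, x^r]$ contains a minimizer $x^\dagger$ of $\phi$ over the original segment, and the returned midpoint $x^* = (x^l+x^r)/2$ satisfies $\|x^* - x^\dagger\| \leq \suboptimality/(\lipschitz m)$. The $\lipschitz$-Lipschitz property then yields $f(x^*) \leq f(x^\dagger) + \suboptimality/m \leq \min(f(x^1), f(x^2)) + \suboptimality/m$. Defining the potential $\Phi(X^*) = \min_{x \in X^*} f(x)$, each outer iteration increases $\Phi$ by at most $\suboptimality/m$, so after $m-1$ iterations the cumulative increase is at most $(m-1)\suboptimality/m < \suboptimality$, giving $f(x^*) \leq \min_{x \in X} f(x) + \suboptimality$. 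Feasibility $x^* \in \convexset$ follows inductively: every generated midpoint lies on a segment between two previously generated points, all ultimately convex combinations of the original $X \subseteq \convexset$, and $X^* = \{x^*\}$ at termination handles the last claim.

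The main technical subtlety I expect is making the bracketing invariant watertight: one must justify that even when $\phi'(1/2) = 0$ (the tie-breaking case where $\oracle{DP}$ returns $+1$ by its definition), the corresponding cut still leaves a valid bracket for a minimizer. This is immediate from the monotonicity of $\phi'$, which guarantees a minimizer in the closed half-interval the algorithm retains; similarly the accumulated-error argument relies on the Lipschitz bound being tight only in the worst case where the pair selected at each outer step contains the current best point, which is the configuration the analysis must assume to obtain the stated $\suboptimality$ guarantee.
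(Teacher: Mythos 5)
Your proof follows essentially the same route as the paper's: the outer loop runs exactly $m-1$ times because the set size decreases by one per iteration; the inner loop is a standard bisection whose bracketing invariant follows from the monotonicity of the directional derivative of the convex univariate restriction; and the error accumulates additively at rate $\suboptimality/m$, giving total error $(m-1)\suboptimality/m < \suboptimality$, with the Lipschitz bound converting the final bracket length into a function-value gap. One small quantitative discrepancy: you (correctly) bound the initial bracket length by the diameter $2\radiusofset{\convexset}$, which yields $\lceil\log_2(\radiusofset{\convexset}\lipschitz m/\suboptimality)\rceil$ inner steps, whereas the paper asserts $\|x^1-x^2\|\leq\radiusofset{\convexset}$ to obtain the tighter $\log_2(\radiusofset{\convexset}\lipschitz m/(2\suboptimality))$ --- that factor of two inside the logarithm is an additive $+1$ in the iteration count, so it cannot simply be ``absorbed'' as you suggest; either the paper's bound is off by one, or one must take $\radiusofset{\convexset}$ to already denote a diameter. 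This does not affect correctness or the asymptotic sample complexity.
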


\begin{proof}[Proof of Lemma \ref{supp_lemma:compareDP}]
	The proof follows from bisection search. We observe that in every iteration of the inner while loop the algorithm halves the search space $Conv(\curlyset{x^{l}, x^{r}})$ according to the result of the directional derivative at the mid point $(x^{r} + x^{l})/2$.  We also note that since only the ascent directions are discarded, at the end of the inner while loop, there exists a point $x^{*} \in Conv(\curlyset{x^{l}, x^{r}})$ such that $\func(x^{*}) = \min_{Conv(\curlyset{x^{1}, x^{2}})}$. Since $\func$ is $\lipschitz$-Lipschitz, and $\norm{x^{r} - x^{l}} \leq 2\suboptimality/(\lipschitz m)$, we have $\func((x^{r} + x^{l})/2) \leq \func(x^{*}) + \suboptimality/m$. 
	
	At the beginning of the inner while loop, the algorithm removes two points $x^{1}, x^{2}$ from $X$ and at the end of the inner while loop the algorithm adds a point $x'$ such that $\func(x') \leq  \min(\func(x^{1}), \func(x^{2}) + \suboptimality/m)$. Therefore, in each iteration of the outer while loop the size of $X^{*}$ decreases by $1$ and the minimum function value among the points in $X^{*}$ increases by at most $\suboptimality/m$. Since the outer loop makes at most $m-1$ iterations, the output point $x^{*}$ satisfies $\func(x^{*}) \leq \min_{x \in X} \func(x) + \suboptimality$.
	
	Since $\norm{x^{1} - x^{2}} \leq \radiusofset{\convexset}$ for all $x^{1}, x^{2} \in X^{*}$, and $\norm{x^{r} - x^{l}}$ is halved in each iteration, the inner while loop makes at most $\log_{2}\frac{\radiusofset{\convexset} \lipschitz m} {2 \suboptimality}$ iterations. Since the outer loop makes at most $m-1$ iterations the number of queries is bounded by $(m-1) \log_{2}\frac{\radiusofset{\convexset} \lipschitz m}{2 \suboptimality}.$

\end{proof}

\begin{algorithm}[ht] 
	\caption{The optimization algorithm \textsc{Optimize-DP($X, \oracle{DP}$)} for the directional preference oracle } \label{supp_algo:optimizeDP}
	\begin{algorithmic}[1]
		\State Find $\circumscribedellipsoid_{\convexset} = \ellipsoid(A^{(k)}, x^{(1)})$ of $\convexset$.
		\State Set $X = \lbrace x^{(1)} \rbrace$, $\convexset^{(1)} = \convexset$, $K = \left \lceil 8\dimension(\dimension+1) \log\left( \frac{2 \radiusofset{\convexset} \lipschitz}{\suboptimality}  \right) + 1  \right \rceil$.
		\For{$ k =1 \ldots K$}
		\State Set $p =$ \textsc{PD-DP}$\left( \oracle{DP}, x^{(k)}, \arcsin\left( 1/(2n)\right), A^{(k)} \right)$.
		\State Set $\convexset^{(k+1)} = \convexset^{(k)} \cap  \ellipsoid(A^{(k)}, x^{(k)}) \cap \isotransinverse{A^{(k)}}{x^{(k)}} \left(\curlyset{ x | \innerproduct{p / \norm{p}}{ x } \leq 1/(2\dimension) }\right)$.
		\State Find $\circumscribedellipsoid_{\convexset^{(k+1)}} = \ellipsoid(A^{(k+1)}, x^{(k+1)})$ of $\convexset^{(k+1)}$.
		\State Set $X = X \cup \lbrace x^{(k+1)} \rbrace$.
		\EndFor
		\State \textbf{return} $\textsc{Compare-DP}(X,\oracle{DP}, \suboptimality/2)$.
		
	\end{algorithmic}
	
\end{algorithm}

\begin{algorithm}[ht] 
	\caption*{\textbf{Function} \textsc{PD-DP}($ x, \theta, \isotrans{A}{x} $)} 
	\begin{algorithmic}[1]
	\State $p = e_{1}$, $r = 1$, $\gamma = \pi/2$.
	\While{$\gamma > \theta$}
	\State Find $d_{i}$ such that $d_{1} = p$, $d_{i} \perp d_{j}$ for all $i\neq j \in [n]$, and $\| d_{i} \| = 1$ for all $i \in [n]$. 
	\State Query $\oracle{DP}(x, A^{-1/2}d_{1}), \ldots, \oracle{DP}(x,A^{-1/2}d_{n})$. \label{supp_algo:findperpdirections}
	\State Set $w_{1} = d_{1}$ and for all $i \in \lbrace 2, \ldots, n \rbrace$, set $w_{i} =  d_{1} \cos(\gamma) +  d_{i} \oracle{DP}(x, A^{-1/2}d_{i}) \sin(\gamma)$. \label{supp_algo:projection}
	\State Set $p = \left( \sum_{i=1}^{n} w_{i}/n \right)/ \left\| \sum_{i=1}^{n} w_{i}/n  \right \|$, \label{supp_algo:findthesmallercone}
	\State Set $\gamma = \arccos(\langle p, w_{2} \rangle)$.
	\State Set $r = \arcsin (\gamma)$. \label{supp_algo:assignment}
	\EndWhile
	\State \textbf{return} $p$.
	\end{algorithmic}
\end{algorithm}

\begin{theorem} \label{supp_theorem:DP}
    Let $K = \ceilx{8 \dimension(\dimension +1) \log\left( \frac{2 \radiusofset{\convexset \lipschitz}}{\suboptimality} \right)}.$ For an $\lipschitz$-Lipschitz, $\smoothness$-smooth, convex function $\func: \convexset \to \mathbb{R}$, Algorithm \ref{supp_algo:optimizeDP} makes at most
    \[\dimension K \ceilx{2\dimension \log(2\dimension)}  + K \log_{2}\left( \frac{\radiusofset{\convexset} \lipschitz (K + 1) }{\suboptimality} \right) \] queries to $\oracle{DP}$ and the output $x'$ of Algorithm \ref{supp_algo:optimizeDP} satisfies $\func(x') \leq \min_{x \in \convexset} \func(x) + \suboptimality$.
\end{theorem}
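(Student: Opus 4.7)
The plan is to decompose Algorithm~\ref{supp_algo:optimizeDP} into three components — the gradient-direction estimator \textsc{PD-DP}, the sequence of ellipsoid cuts, and the final comparison routine \textsc{Compare-DP} — and bound the query cost and correctness of each. For \textsc{PD-DP}, I would iterate Lemma~\ref{supp_lemma:smallergradientcones} in the isotropic coordinates: each outer iteration contracts $\sin\gamma$ by a factor of at most $\sqrt{(\dimension-1)/\dimension}$, starting from $\sin\gamma = 1$, so termination at $\sin\gamma \leq 1/(2\dimension)$ is reached after at most $\ceilx{2\dimension\log(2\dimension)}$ iterations (using $-\log(1-1/\dimension) \geq 1/\dimension$ for $\dimension \geq 2$). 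Each iteration issues $\dimension$ queries to $\oracle{DP}$ for the orthogonal frame $d_1,\ldots,d_\dimension$, so one invocation of \textsc{PD-DP} costs at most $\dimension\ceilx{2\dimension\log(2\dimension)}$ queries and returns a direction $p$ whose angle with the gradient of the isotropically transformed $\funciso$ at the query point is at most $\arcsin(1/(2\dimension))$. With this accuracy in hand, Lemma~\ref{supp_lemma:ellipsoid} shows that the new containing ellipsoid has volume at most $e^{-1/(8(\dimension+1))}$ times the previous one, and since affine maps preserve volume ratios this bound transfers to the original coordinates. Bounding the initial circumscribing ellipsoid crudely by the volume of its outer ball yields $\mathrm{Vol}(\ellipsoid(A^{(1)},x^{(1)})) \leq \volumeofunitball \radiusofset{\convexset}^{\dimension}$, so after $K$ iterations the containing ellipsoid has volume at most $\volumeofunitball \radiusofset{\convexset}^{\dimension} e^{-K/(8(\dimension+1))}$.

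The key step is showing that some center $x^{(k)}$ is $(\suboptimality/2)$-optimal. I would proceed by contradiction: suppose $\func(x^{(k)}) > \func(x^{*}) + \suboptimality/2$ for every $k \in \curlyset{1,\ldots,K+1}$. The interior-point assumption on $x^{*}$ ensures $\ball(\suboptimality/(2\lipschitz), x^{*}) \subseteq \convexset$, and $\lipschitz$-Lipschitzness then gives $\func(y) \leq \func(x^{*}) + \suboptimality/2 < \func(x^{(k)})$ for every $y$ in this ball and every $k$. Lemma~\ref{supp_lemma:ellipsoid} guarantees that each cut removes only points whose $\func$-value is at least $\func(x^{(k)})$, so the entire ball is preserved through all $K$ iterations and therefore lies inside the final circumscribing ellipsoid. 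But the volume bound above, combined with the choice $K = \ceilx{8\dimension(\dimension+1)\log(2\radiusofset{\convexset}\lipschitz/\suboptimality)}$, forces this volume to be strictly less than $\volumeofunitball(\suboptimality/(2\lipschitz))^{\dimension}$, a contradiction. Hence at least one $x^{(k)} \in X$ satisfies $\func(x^{(k)}) \leq \func(x^{*}) + \suboptimality/2$.

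Finally, applying Lemma~\ref{supp_lemma:compareDP} with $\absx{X} = K+1$ and tolerance $\suboptimality/2$ produces an output $x'$ satisfying $\func(x') \leq \min_{x\in X}\func(x) + \suboptimality/2 \leq \func(x^{*}) + \suboptimality$ at a cost of at most $K\log_{2}(\radiusofset{\convexset}\lipschitz(K+1)/\suboptimality)$ queries. Summing the $K$ invocations of \textsc{PD-DP} with this final call gives the advertised total. The main obstacle is the geometric bookkeeping in the contradiction step: I must verify that the cut specified by Algorithm~\ref{supp_algo:optimizeDP} in isotropic coordinates, combined with the angular guarantee from \textsc{PD-DP}, exactly meets the hypothesis of Lemma~\ref{supp_lemma:ellipsoid} applied to $\funciso = \func \circ \isotransinverse{A^{(k)}}{x^{(k)}}$, so that the ``only ascent directions are discarded'' property holds back in the original coordinates. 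Once this transfer between the two coordinate systems is justified, the volume-based contradiction closes the proof cleanly.
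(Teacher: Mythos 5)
Your proposal follows essentially the same route as the paper's proof: iterate Lemma~\ref{supp_lemma:smallergradientcones} to bound the cost of \textsc{PD-DP} at $\dimension\ceilx{2\dimension\log(2\dimension)}$ queries, apply Lemma~\ref{supp_lemma:ellipsoid} to shrink the circumscribing ellipsoid by a factor $e^{-1/(8(\dimension+1))}$ per iteration, compare the resulting volume to that of an $(\suboptimality/2)$-sublevel neighborhood of $x^*$ to show some center $x^{(k)}$ is $(\suboptimality/2)$-optimal, and finish with Lemma~\ref{supp_lemma:compareDP}. The only presentational difference is that you phrase the correctness step as a contradiction (assume all centers are $>\suboptimality/2$-suboptimal, show the ball around $x^*$ survives every cut, contradict the volume bound), whereas the paper argues directly that some near-optimal point must be discarded and that every discarded point dominates some center; these are the same argument up to contraposition.
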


\begin{proof}[Proof of Theorem \ref{supp_theorem:DP}]
	We prove the theorem by showing that the output $x'$ of Algorithm \ref{supp_algo:optimizeDP} satisfies $\func(x') \leq \min_{x \in \convexset} \func(x) + \suboptimality$ and Algorithm \ref{supp_algo:optimizeDP} makes at most $\dimension K \ceilx{2\dimension \log(2\dimension)}  + K \log\left( \frac{\radiusofset{\convexset} \lipschitz (K+1) }{\suboptimality} \right)$ queries to $\oracle{DP}$ where $K =  \ceilx{8 \dimension(\dimension +1) \log\left( \frac{2 \radiusofset{\convexset \lipschitz}}{\suboptimality} \right)}$.
	
	We first show that the output $x'$ of Algorithm \ref{supp_algo:optimizeDP} satisfies $\func(x') \leq \min_{x \in \convexset} \func(x) + \suboptimality$. We note that due to Lemma \ref{supp_lemma:smallergradientcones}, at iteration $k$, the cone $T^{-1}_{A,x^{k}} \left(\cone(p, \arcsin(1/(2\dimension)))\right)$  of possible gradient directions after the gradient pruning algorithm terminates, includes the gradient. Consequently, the dual cone of $T^{-1}_{A,x^{k}} \left(\cone(p, \arcsin(1/(2\dimension)))\right)$ includes only the non-descent directions, i.e., $\func(x) \geq \func(x^{k}) $ for all $x \in T^{-1}_{A,x^{k}} \left(\cone(p, \pi/2 - \arcsin(1/(2\dimension)))\right)$. Therefore, after iteration $k$ there exists a $x^{*} \in \convexset^{(k+1)}$ such that $\func(x^{*}) = \min_{x \in \convexset} \func(x)$. 
	
	Since $\func$ is $L$-Lipschitz, the volume of the set $\curlyset{x | x \in \convexset, \func(x) \leq \func(x^{*}) + \suboptimality/2}$ is at least $\volumeofunitball \left( \frac{\suboptimality}{2 \lipschitz}\right)^{\dimension}$. Due to Lemma \ref{supp_lemma:ellipsoid}, we have $Vol\left(\circumscribedellipsoid_{\convexset^{(K+1)}} \right)< \volumeofunitball \left( \frac{\suboptimality}{2 \lipschitz}\right)^{\dimension}$. Therefore, there exists a point $x$ such that $x \not \in \convexset^{(K+1)}$ and $\func(x) \leq \func(x^{*}) + \suboptimality/2$

	Since every discarded point $x$ satisfies $\func(x) \geq \func(x^{k}) $ for some $1 \leq k \leq K$, we have $\func(x^{k}) \leq \func(x^{*}) + \suboptimality/2$ for some $1 \leq k \leq K$. 	Due to Lemma \ref{supp_lemma:compareDP}, the output point $x' = \textsc{Compare-DP}(X,\oracle{DP}, \suboptimality/2)$ satisfies $\func(x') \leq \min_{x \in X} \func(x) + \suboptimality/2 \leq \func(x^{*}) + \suboptimality.$ 
	
	We now prove the bound on the number of queries. The gradient pruning algorithm starts with $\gamma = \pi/2$. As shown in Lemma \ref{supp_lemma:smallergradientcones}, we have $\sin(\gamma) \leq \sqrt{ \frac{\dimension-1}{\dimension}}^{k} \leq e^{\frac{-k}{2\dimension}}$ after $k$ iterations. Since $\theta = \arcsin(1/2\dimension)$, the gradient pruning algorithm \textsc{PD-DP} stops after at most $\left \lceil 2\dimension\log(2\dimension) \right \rceil$ iterations where we make $n$ queries in each iteration. The for loop in Algorithm \ref{supp_algo:optimizeDP} has $K$ iterations. Therefore, the total number of queries due to the gradient pruning algorithm is $\dimension \left \lceil 2\dimension\log(2\dimension) \right \rceil K.$
	
	When $\textsc{Compare-DP}$ is called in Algorithm \ref{supp_algo:optimizeDP}, the set has $X$ has $K+1$ elements. Due to Lemma \ref{supp_lemma:compareDP}, the process $\textsc{Compare-DP}(X,\oracle{DP}, \suboptimality/2)$ makes $K \log_{2} \frac{\radiusofset{\convexset} \lipschitz (K+1)}{\suboptimality}$ queries. 
	
	The total number of queries is bounded by $\dimension K \ceilx{2\dimension \log(2\dimension)}  + K \log\left( \frac{\radiusofset{\convexset} \lipschitz (K+1) }{\suboptimality} \right)$ .
	 
\end{proof}
\subsection{Proof of Theorem \ref{supp_theorem:C}}
The proof of Theorem \ref{supp_theorem:C} is similar to the proof of Theorem \ref{supp_theorem:DP}. If the direction pruning algorithm does not encounter an unknown direction, the algorithm approximately estimates the direction of the gradient. If there is an unknown direction, then we consider two cases: the magnitude of the projection of the gradient in the known directions is large compared to the magnitude of the projection of the gradient in the known directions and otherwise. We show that, in the first case, the estimated gradient direction is still close to the direction of the gradient. In the second case, we show that the ellipsoid center is near-optimal since the magnitude of the gradient is small.  

\begin{algorithm}[ht] 
	\caption{The optimization algorithm \textsc{Optimize-C($\suboptimality$)} for the comparator oracle } \label{supp_algo:optimizeC}
	\begin{algorithmic}[1]
		\State Set  $\convexset^{(1)} = \convexset$. Find $\circumscribedellipsoid_{\convexset^{(1)}} = \ellipsoid(A^{(1)}, x^{(1)})$ of $\convexset^{(1)}$.
		\State Set  $X = \lbrace x^{(1)} \rbrace$, $K = \left \lceil 8\dimension(\dimension+1) \log\left( \frac{ \radiusofset{\convexset} \lipschitz}{\suboptimality}  \right)  \right \rceil$, $\kappa = \max\left( \frac{4}{4 \dimension - \sqrt{2}\dimension\sqrt{\frac{4\dimension^2 - 1}{4\dimension^2}}} , 1 \right).$
		\For{$ k =1 \ldots K$}
		\State Set $t^{(k)} = \frac{\min(\suboptimality, \sqrt{\eigenvalue_{\max}(A^{k})} )}{\kappa \dimension^{5/2} \max(\smoothness, 1) \max(\radiusofset{\convexset}, 1)}.$ 
		\State Set $p$ = \textsc{PD-C}$\left(  x^{(k)}, \arcsin\left(\frac{1}{2\sqrt{2}\dimension}\right), A^{(k)}, t^{(k)} \right).$
		\State Set $\convexset^{(k+1)} = \convexset^{(k)} \cap  \ellipsoid(A^{(k)}, x^{(k)}) \cap \isotransinverse{A^{(k)}}{x^{(k)}} \left(\curlyset{ x | \innerproduct{p / \norm{p}}{ x } \leq 1/(2\dimension) }\right)$.
		\State Find $\circumscribedellipsoid_{\convexset^{(k+1)}} = \ellipsoid(A^{(k+1)}, x^{(k+1)})$ of $\convexset^{(k+1)}$. 
		\State Set  $X = X \cup \lbrace x^{(k+1)} \rbrace$.
		\EndFor
		\State Find $x' = \min_{x \in X} \func(x)$ using $\oracle{C}$.
		\State \textbf{return} $x'$.
	\end{algorithmic}
\end{algorithm}

\begin{algorithm}[ht] 
	\caption*{\textbf{Function}  \textsc{PD-C}{$(x, \theta, A, t )$}}
	\begin{algorithmic}[1]
		\State Set $r = 1$, $\gamma = \pi/2$, $m=0$, $UD = \emptyset$, $p = e_{1}$.
		\While{$\gamma > \theta \wedge m < \dimension$}
		\State Set $\curlyset{d_{1}, \ldots, d_{m}} = UD$.
		\State Find $d_{i}$ such that $d_{m+1} = p$, $d_{i} \perp d_{j}$ for all $i\neq j \in [\dimension]$, and $\| d_{i} \| = 1$ for all $i \in [\dimension]$.
		\State  Set $\oracle{DP}(x, A^{-1/2}d_{i}) =$ \textsc{FDD-C}$(A, x_{0},  d, t)$ for all $i \in [\dimension]$.
		\If{$\exists i \in \curlyset{m+1, \ldots, \dimension}$, such that $\oracle{DP}(x, A^{-1/2}d_{i}) =unknown$}
		\State  Set $UD = UD \cup d_{i}$,  and $m= m+1$.
		\Else  
		\State Set $w_{i} =  d_{m+1} \oracle{DP}(x, A^{-1/2}d_{m+1}) \cos(\gamma) +  d_{i} \oracle{DP}(x, A^{-1/2}d_{i}) \sin(\gamma)$ for all $i \in [n]$.
		\State Set $p = \left( \sum_{i=m+1}^{\dimension} w_{i}/\dimension \right)/ \left\| \sum_{i=m+1}^{\dimension} w_{i}/\dimension  \right \|$, $\gamma = \cos^{-1}(\langle p, w_{m+2} \rangle)$, $r = \arcsin (\gamma)$. 
		\EndIf
		\EndWhile
		\State \textbf{if} $m\neq \dimension$ \textbf{then} \textbf{return} $p$, \textbf{else} \textbf{return} $e_{1}$.
	\end{algorithmic}
\end{algorithm}

\begin{algorithm}[ht] 
	\caption*{\textbf{Function} \textsc{FDD-C}{($A, x_{0},  d, t$)}}
	\begin{algorithmic}[1]
		\State Query $\oracle{\convexset}(x - tA^{-1/2}d, x), \oracle{\convexset}(x, x+ tA^{-1/2}d)$.
		\State \textbf{if} {$\func(x - tA^{-1/2}d) \leq \func(x) \wedge \func(x ) \leq \func(x+ tA^{-1/2}d)$} \textbf{then}  \textbf{return} $1$.
		\State \textbf{else if} {$\func(x - tA^{-1/2}d) < \func(x) \wedge \func(x ) < \func(x+ tA^{-1/2}d)$} \textbf{then} \textbf{return} $-1$. 
		\State \textbf{else} \textbf{return} $unknown$. 
	\end{algorithmic}
\end{algorithm}

\begin{theorem} \label{supp_theorem:C} 
Let $K =  \left \lceil 8 \dimension (\dimension + 1) \log\left( \frac{ \radiusofset{\convexset} \lipschitz}{\suboptimality}  \right)  \right \rceil.$
For an $\lipschitz$-Lipschitz, $\smoothness$-smooth, convex function $\func: \convexset \to \mathbb{R}$, Algorithm \ref{supp_algo:optimizeC} makes at most \[  2\dimension\left \lceil 2n\log(2\sqrt{2}\dimension) + \dimension \right \rceil K +  K
	\] queries to $\oracle{C}$ and the output $x'$ of Algorithm \ref{supp_algo:optimizeC} satisfies $\func(x') \leq \min_{x \in \convexset} \func(x) + \suboptimality$.
\end{theorem}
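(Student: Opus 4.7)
The plan is to mirror the proof of Theorem~\ref{supp_theorem:DP}, but with the extra case analysis forced by the possibility that \textsc{FDD-C} returns $unknown$. For each outer iteration $k$, I would show that either (a) the cut generated from the returned direction $p$ is a valid ellipsoid cut in the sense of Lemma~\ref{supp_lemma:ellipsoid}, so the minimizer is retained and the volume shrinks by at least $e^{-1/(8(\dimension+1))}$, or (b) the current center $x^{(k)}$ is already $\suboptimality$-optimal, which is then recovered by the final pairwise comparison over $X$.

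First I would analyze \textsc{FDD-C}. If it returns $\pm 1$, the three sampled function values are monotone on the line, which certifies the sign of the directional derivative at $x$ along $A^{-1/2}d$. If it returns $unknown$ (the case in Figure~\ref{fig:threecases:case3}), then convexity produces a point $x'$ on the segment with zero directional derivative, and $\smoothness$-smoothness applied in isotropic coordinates yields $|\innerproduct{\nabla \funciso(x)}{d}| \leq \smoothness t$. Then I would analyze \textsc{PD-C}: at each of its inner iterations either all $\dimension$ queried directions are known, in which case Lemma~\ref{supp_lemma:smallergradientcones} (applied inside $Span(UD)^{\bot}$) shrinks the candidate semi-vertical angle by a factor $\sqrt{(\dimension-m-1)/(\dimension-m)}$, or at least one direction is added to $UD$ which can happen at most $\dimension$ times. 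This bounds the total inner iterations by $\lceil 2\dimension\log(2\sqrt{2}\dimension) + \dimension\rceil$, with $\dimension$ calls to \textsc{FDD-C} (hence $2\dimension$ comparator queries) per iteration.

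The core geometric claim is that, upon termination, the returned $p$ and the final $UD$ satisfy $\angle(p,\ Proj_{UD^{\bot}}(\nabla \funciso(x^{(k)}))) \leq \arcsin(1/(2\sqrt{2}\dimension))$ and $\norm{Proj_{UD}(\nabla \funciso(x^{(k)}))} \leq \sqrt{\dimension}\smoothness t^{(k)}$. From this I would split on whether $\norm{Proj_{UD^{\bot}}(\nabla \funciso(x^{(k)}))}$ exceeds a threshold proportional to $\sqrt{\dimension}\smoothness t^{(k)}$ whose constant is dictated by $\kappa$. In that regime, the triangle-type inequality $\sin(\alpha+\beta) \leq \sin\alpha\cos\beta + \cos\alpha\sin\beta$ combines the two angle errors into a total angle at most $\arcsin(1/(2\dimension))$, so Lemma~\ref{supp_lemma:ellipsoid} applies in isotropic coordinates and the cut $\innerproduct{p/\norm{p}}{x} \leq 1/(2\dimension)$ is both volume-reducing and minimizer-preserving. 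In the complementary regime, $\norm{\nabla \funciso(x^{(k)})}$ is at most $\mathcal{O}(\dimension^{3/2}\smoothness t^{(k)})$, which together with the choice of $t^{(k)}$ (scaled by $\eigenvalue_{\max}(A^{(k)})$ to undo the isotropic transformation on $\lipschitz$ and on distances) and the containment of $C$ in the circumscribing ellipsoid gives $\func(x^{(k)}) \leq \func(x^{*}) + \suboptimality$ via convexity.

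Given this dichotomy, the correctness argument copies that of Theorem~\ref{supp_theorem:DP}: after $K$ iterations the volume of the circumscribing ellipsoid drops below $\volumeofunitball (\suboptimality/\lipschitz)^{\dimension}$, so since $\ellipsoid(\suboptimality \identity/\lipschitz, x^{*}) \subseteq \convexset$ some point of this optimum-neighborhood must have been removed; that can only happen on an iteration falling into case (a), and the corresponding $x^{(k)}$ dominates all removed points in $f$-value. Otherwise, case (b) occurred, in which case the responsible $x^{(k)}$ is already $\suboptimality$-optimal. Either way $X$ contains an $\suboptimality$-optimal point, and the concluding $\min_{x\in X}\func(x)$ via $\oracle{\convexset}$ finds it at cost $K$ comparator queries; combined with $K$ calls to \textsc{PD-C} this yields the stated query bound $2\dimension\lceil 2\dimension\log(2\sqrt{2}\dimension) + \dimension\rceil K + K$. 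The main obstacle I anticipate is the delicate bookkeeping in the case split: one must simultaneously track the known-subspace estimation error, the $UD$-component bound from smoothness, and the distortion of the Lipschitz/smoothness constants under the isotropic transformation, and this is precisely what pins down the specific definition of $\kappa$ and the $\dimension^{5/2}$ factor inside $t^{(k)}$.
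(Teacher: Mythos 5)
Your proposal follows essentially the same route as the paper's proof: certify directional-derivative signs via \textsc{FDD-C}, bound the $UD$-components by $\smoothness$-smoothness when \textsc{FDD-C} returns \emph{unknown}, run \textsc{PD-C} to shrink the semi-vertical angle while accumulating at most $\dimension$ unknown directions, and then split into ``cut is valid'' versus ``ellipsoid center is already $\suboptimality$-optimal,'' closing with the standard volume argument and a $K$-query comparison over $X$. Your query count matches the paper's for the same reason. The one place where you genuinely diverge is in how the two angle errors are combined. The paper exploits the fact that $p \in Span(UD)^{\bot}$ to write the exact identity $\innerproduct{p}{\nabla \funciso} = \innerproduct{p}{Proj_{UD^{\bot}}\nabla\funciso}$, hence $\cos\angle(p,\nabla\funciso) = \cos\angle(p,Proj_{UD^{\bot}}\nabla\funciso)\cdot\norm{Proj_{UD^{\bot}}\nabla\funciso}/\norm{\nabla\funciso}$, and then lower-bounds \emph{each} cosine factor by $\sqrt{1-1/(8\dimension^2)}$ to conclude $\sin\angle(p,\nabla\funciso) \leq 1/(2\dimension)$; the threshold regime it splits on is $\norm{\nabla\funciso}$ versus $\suboptimality/(\dimension\radiusofset{\convexset})$. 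You instead add the two angles via $\sin(\alpha+\beta)\le\sin\alpha\cos\beta+\cos\alpha\sin\beta$ and threshold on $\norm{Proj_{UD^{\bot}}\nabla\funciso}$ directly. Both are workable, but the additive route is strictly looser than the multiplicative cosine identity, so the specific constants $\arcsin(1/(2\sqrt{2}\dimension))$ in \textsc{PD-C} and the form of $\kappa$, $t^{(k)}$ in Algorithm~\ref{supp_algo:optimizeC} are tailored to the paper's multiplicative bound; if you actually carry through the additive version you may find the budget for the $UD$-component of the gradient is tighter than what the stated $t^{(k)}$ delivers, and you would need to re-derive (or enlarge) $\kappa$ and the $\dimension^{5/2}$ factor. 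Also note two small omissions: you should check feasibility of the perturbed query points (the paper does this via Lemma~\ref{supp_lemma:feasibleinnerball}), and you should explicitly subsume the $m=\dimension$ termination of \textsc{PD-C} (where it returns $e_1$) into your case (b), since then $Proj_{UD^{\bot}}\nabla\funciso = 0$ and the cut performed can discard the minimizer, which is harmless only because $x^{(k)}$ is already in $X$ and is near-optimal.
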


\begin{proof}[Proof of Theorem \ref{supp_theorem:C}]
	We first show that the output $x'$ of Algorithm \ref{supp_algo:optimizeC} satisfies $\func(x') \leq \min_{x \in \convexset} \func(x) + \suboptimality$. We then prove the bound on the number of queries.
	
	Note that all query points are in $\convexset$. By Lemma \ref{supp_lemma:feasibleinnerball} we know that every $\isotrans{A^{(k)}}{x^{(k)}}^{-1}(x)$ such that $\norm{x} \leq \frac{\sqrt{\eigenvalue_{\max}(A^{(k)})}}{2\dimension}$ are in $\convexset^{(k)}$ and consequently in $\convexset$. All queries have distance at most $\frac{\sqrt{\eigenvalue_{\max}(A^{(k)})}}{\kappa\dimension^{5/2}}$ from the origin in the isotropic coordinates. Since $\kappa \geq 1$, it implies that all query points are in $\convexset$.

	At iteration $k$, due to Lemma \ref{supp_lemma:ratioofellipsoidandcircle}, we have $\sqrt{\eigenvalue_{\max}(A^{(k)})} \leq \dimension \radiusofset{\convexset^{(k)}} \leq \dimension \radiusofset{\convexset}$. Consequently the radius $\radiusofset{\isotrans{A^{(k)}}{x^{(k)}}(\convexset^{(k)})}$ of $\convexset^{(k)}$ in isotropic coordinates is at most $\dimension \radiusofset{\convexset}$.
	
	Let $E_{k}$ denote the event that there does not exist a point $x^{*} \in \convexset^{(k)}$ such that $\func(x^{*}) = \min_{x \in \convexset} \func(x)$. Note that $E_{1}$ does not happen. Assume that $E_{1}, \ldots, E_{k}$ did not happen. We show that either event $E_{k+1}$ does not happen or the algorithm finds a near optimal point at iteration $k$. If $E_{1}, \ldots, E_{K+1}$ do not happen, then one of the ellipsoid centers are optimal as in the classical ellipsoid method.

	We consider $3$ cases:
	\begin{enumerate}
		\item $\norm{\nabla \left( \func \circ \isotransinverse{A^{(k)}}{x^{(k)}} \right) (0)} > \frac{\suboptimality}{\dimension \radiusofset{\convexset}}$ and $m = n$ when \textsc{PD-C} terminates,
		\item $\norm{\nabla \left( \func \circ \isotransinverse{A^{(k)}}{x^{(k)}} \right) (0)} > \frac{\suboptimality}{\dimension \radiusofset{\convexset}}$ and $m \neq n$  when \textsc{PD-C} terminates,
		\item $\norm{\nabla \left( \func \circ \isotransinverse{A^{(k)}}{x^{(k)}} \right) (0)} \leq \frac{\suboptimality}{\dimension \radiusofset{\convexset}}$.
	\end{enumerate}

	\textbf{Case 1:}
		We note that the function $\func \circ \isotransinverse{A^{(k)}}{x^{(k)}} $ is also $\smoothness$-smooth since we can only expand the coordinates via the isotropic transformation. In \textsc{PD-C}, if a unit vector $d$ is in $UD$, i.e., is unknown,  then due to $\smoothness$-smoothness we have $\absx{\innerproduct{ \nabla \left( \func \circ \isotransinverse{A^{(k)}}{x^{(k)}} \right) (0)}{d}} \leq \frac{\min(\suboptimality, \sqrt{\eigenvalue_{\max}(A^{k})} )}{\kappa \dimension^{5/2} \max(\radiusofset{\convexset},1)}$. 
		
	 Since $m =n$, i.e., all basis directions are unknown, we have $\absx{\innerproduct{ \nabla \left( \func \circ \isotransinverse{A^{(k)}}{x^{(k)}} \right)(0)}{d_{i}}} \leq \frac{\min(\suboptimality, \sqrt{\eigenvalue_{\max}(A^{k})} )}{\kappa \dimension^{5/2} \max(\radiusofset{\convexset},1)}$ for all $i\in [\dimension]$. Since $d_{i} \perp d_{j}$ for all $i \neq j \in [\dimension]$, and $\norm{d_{i}} = 1$ for all $i \in [\dimension]$ by construction, we have $\norm{\nabla \left( \func \circ \isotransinverse{A^{(k)}}{x^{(k)}} \right)(0)} \leq\frac{\min(\suboptimality, \sqrt{\eigenvalue_{\max}(A^{k})} )}{\kappa \dimension^{2} \max(\radiusofset{\convexset},1)}.$ This implies that $\func(x^{(k)}) \leq \func(x^{*}) + \frac{\radiusofset{\convexset}\min(\suboptimality, \sqrt{\eigenvalue_{\max}(A^{k})} )}{\kappa \dimension \max(\radiusofset{\convexset},1)} \leq \func(x^{*}) + \suboptimality$ since $\kappa \geq 1$, the function in isotropic coordinates is convex, , $\radiusofset{\isotrans{A^{(k)}}{x^{(k)}}(\convexset^{(k)})} \leq \dimension \radiusofset{\convexset}$, and there exists a minimizer $x^{*} \in \convexset^{(k)}$.
	
		\textbf{Case 2:} Let $p^{j}$ denote the value of $p$ at iteration $j$ of \textsc{PD-C}. Assume that $\angle\left(p^{j}, Proj_{UD^{\bot}}\left(\nabla \left( \func \circ \isotransinverse{A^{(k)}}{x^{(k)}} \right)(0)\right) \right) \leq \gamma$ while \textsc{PD-C} runs.  If a new unknown direction $d_{i}$ is detected at iteration $j$ of \textsc{PD-C}, then we have $\angle\left(p^{j+1}, Proj_{(UD \cup \curlyset{d_{i}})^{\bot}}\left(\nabla \left( \func \circ \isotransinverse{A^{(k)}}{x^{(k)}} \right)(0)\right) \right) \leq \gamma$  since $p^{j+1}$ and $d_{i}$ are orthogonal. Therefore, the angle $\angle\left(p, Proj_{UD^{\bot}}\left(\nabla \left( \func \circ \isotransinverse{A^{(k)}}{x^{(k)}} \right)(0)\right) \right)$ does not increase when a new unknown direction is detected. If there is no new unknown direction, then $Proj_{UD^{\bot}}\left(\nabla \left( \func \circ \isotransinverse{A^{(k)}}{x^{(k)}} \right)(0)\right)$ is in a hyperoctant in the subspace defined by $Span(UD^{\bot}).$ We have 
	\[	\angle\left(p^{j+1}, Proj_{UD^{\bot}}\left(\nabla \left( \func \circ \isotransinverse{A^{(k)}}{x^{(k)}} \right)(0)\right) \right)  \leq \arccos(\innerproduct{p^{j+1}}{w_{m+2}}) \leq \arcsin\left( \sqrt{\frac{\dimension-1}{\dimension}} \sin(\gamma)  \right) \] by Lemma \ref{supp_lemma:smallergradientcones}. Since the angle $\angle\left(p, Proj_{UD^{\bot}}\left(\nabla \left( \func \circ \isotransinverse{A^{(k)}}{x^{(k)}} \right)(0)\right) \right)$ decreases by a constant factor if there is no new unknown directions and \textsc{PD-C} can detect an unknown direction at most $\dimension -1$ times, we have $\angle\left(p, Proj_{UD^{\bot}}\left(\nabla \left( \func \circ \isotransinverse{A^{(k)}}{x^{(k)}} \right)(0)\right) \right) \leq \arcsin(1/(2\sqrt{2}\dimension))$ when \textsc{PD-C} terminates. This implies \begin{equation} \label{supp_eq:projontoAD}
	\frac{\innerproduct{p}{Proj_{UD^{\bot}}\left(\nabla \left( \func \circ \isotransinverse{A^{(k)}}{x^{(k)}} \right)(0)\right)}}{\norm{Proj_{UD^{\bot}}\left(\nabla \left( \func \circ \isotransinverse{A^{(k)}}{x^{(k)}} \right)(0)\right)}} \geq \sqrt{1 - \frac{1}{8\dimension^2}}.
	\end{equation} Since $p \not\in Span(UD)$, \eqref{supp_eq:projontoAD} implies that \begin{equation} \label{supp_eq:projontoAD2}
	\frac{\innerproduct{p}{ \nabla \left( \func \circ \isotransinverse{A^{(k)}}{x^{(k)}} \right)(0)}}{\norm{Proj_{UD^{\bot}}\left(\nabla \left( \func \circ \isotransinverse{A^{(k)}}{x^{(k)}} \right)(0)\right)}} \geq \sqrt{1 - \frac{1}{8\dimension^2}}.
	\end{equation}

	When \textsc{PD-C} terminates, we have \[ {\norm{Proj_{UD}\left(\nabla \left( \func \circ \isotransinverse{A^{(k)}}{x^{(k)}} \right)(0)\right)}} \leq \frac{\sqrt{m}\min(\suboptimality, 1)}{\kappa \dimension^{2} \max(\radiusofset{\convexset},1)} \leq \frac{\sqrt{\dimension}\suboptimality}{\kappa\dimension^{5/2} \radiusofset{\convexset}} = \frac{\suboptimality}{\kappa\dimension^{2} \radiusofset{\convexset}} .  \]  Using this we get  
	\begin{subequations}
		\begin{align*}
		\norm{\nabla \left( \func \circ \isotransinverse{A^{(k)}}{x^{(k)}} \right)(0)} &= \norm{Proj_{UD}\left(\nabla \left( \func \circ \isotransinverse{A^{(k)}}{x^{(k)}} \right)(0)\right) + Proj_{UD^{\bot}}\left(\nabla \left( \func \circ \isotransinverse{A^{(k)}}{x^{(k)}} \right)(0)\right)} 
		\\
		&\leq \norm{Proj_{UD^{\bot}}\left(\nabla \left( \func \circ \isotransinverse{A^{(k)}}{x^{(k)}} \right)(0)\right)}   + \frac{\suboptimality}{\kappa\dimension^{2} \radiusofset{\convexset}}.
		\end{align*}
	\end{subequations}

	Since $\norm{\nabla \left( \func \circ \isotransinverse{A^{(k)}}{x^{(k)}} \right)(0)} > \frac{\suboptimality}{\dimension \radiusofset{\convexset}}$, we have 
	\begin{equation} \label{supp_eq:projontoADneg}
	\frac{\norm{Proj_{UD^{\bot}}\left(\nabla \left( \func \circ \isotransinverse{A^{(k)}}{x^{(k)}} \right)(0)\right)} }{\norm{\nabla \left( \func \circ \isotransinverse{A^{(k)}}{x^{(k)}} \right)(0)}}  > 1 - \frac{1}{\kappa \dimension} = \sqrt{1- \frac{1}{8 \dimension^2}}.
	\end{equation}
	
	By combining \eqref{supp_eq:projontoAD2} and \eqref{supp_eq:projontoADneg}, we finally get \begin{subequations}
		\begin{align}
		&\frac{\innerproduct{p}{ \nabla \left( \func \circ \isotransinverse{A^{(k)}}{x^{(k)}} \right)(0)}}{\norm{ \nabla \left( \func \circ \isotransinverse{A^{(k)}}{x^{(k)}} \right)(0)}} 
		\\ &= \frac{\innerproduct{p}{ \nabla \left( \func \circ \isotransinverse{A^{(k)}}{x^{(k)}} \right)(0)}}{\norm{Proj_{UD^{\bot}}\left(\nabla \left( \func \circ \isotransinverse{A^{(k)}}{x^{(k)}} \right)(0)\right)}} 		\frac{\norm{Proj_{UD^{\bot}}\left(\nabla \left( \func \circ \isotransinverse{A^{(k)}}{x^{(k)}} \right)(0)\right)}}{\norm{ \nabla \left( \func \circ \isotransinverse{A^{(k)}}{x^{(k)}} \right)(0)}} 
		\\
		&\geq \sqrt{1- \frac{1}{8 \dimension^2}}\sqrt{1- \frac{1}{8 \dimension^2}}
		\\
		&\geq \sqrt{1- \frac{1}{4 \dimension^2}}. \label{supp_ineq:lowdegree}
		\end{align}
	\end{subequations}
	
	We note that \eqref{supp_ineq:lowdegree} implies that when $m\neq \dimension$ and $\norm{ \nabla \left( \func \circ \isotransinverse{A^{(k)}}{x^{(k)}} \right)(0)}  > \frac{\suboptimality}{\dimension \radiusofset{\convexset}}$, we have $ \nabla \left( \func \circ \isotransinverse{A^{(k)}}{x^{(k)}} \right)(0) \in \cone(p, \arcsin(1/(2\dimension)))$. This implies that the gradient estimate is accurate and the ellipsoid cut only removes ascent directions. If the gradient pruning algorithm succeeds in the $k$th iteration then by Lemma \ref{supp_lemma:ellipsoid}, $C^{(k)} \setminus C^{(k+1)}$ only includes the ascent points.  After every iteration $k$, there exists a $x^{*} \in \convexset^{(k)}$ such that $\func(x^{*}) = \min_{x \in \convexset} \func(x)$.  Therefore, event $E_{k+1}$ does not happen.
	
	\textbf{Case 3:} We have  $\func(x^{(k)}) \leq \func(x^{*}) + \suboptimality$ since the function in isotropic coordinates is convex, $\radiusofset{\isotrans{A^{(k)}}{x^{(k)}}(\convexset^{(k)})} \leq \dimension \radiusofset{\convexset}$, and there exists a minimizer $x^{*} \in \convexset^{(k)}$.

	If Case $1$ or $3$ happens, the output point $x'$ of Algorithm \ref{supp_algo:optimizeC} satisfies $\func(x') \leq \func(x^{(k)}) \leq \func(x^{*}) + \suboptimality$ since Algorithm \ref{supp_algo:optimizeC} compares the ellipsoid centers before termination.  If Case $1$ or $3$ does not happen, then event $E_{1}, \ldots, E_{K+1}$ does not happen, i.e., the ellipsoid method proceeds successfully. Without loss of generality we assume that Case $1$ or $3$ does not happen.
	
	Since $\func$ is $\lipschitz$-Lipschitz the volume of the set $\curlyset{x | x \in \convexset, \func(x) \leq \func(x^{*}) + \suboptimality}$ is at least $\volumeofunitball \left( \frac{\suboptimality}{ \lipschitz}\right)^{\dimension}$. Let $K = \left \lceil 8\dimension (\dimension+1) \log\left( \frac{ \radiusofset{\convexset} \lipschitz}{\suboptimality} \right)  \right \rceil$. Due to Lemma \ref{supp_lemma:ellipsoid}, we have $Vol\left(\circumscribedellipsoid_{\convexset^{(K)}} \right)< \volumeofunitball \left( \frac{\suboptimality}{ \lipschitz}\right)^{\dimension}$. Therefore, there exists a point $x$ such that $x \not \in \convexset^{(K)}$ and $\func(x) \leq \func(x^{*}) + \suboptimality$. 
	
	Since the function value of every discarded point in $\convexset \setminus \convexset^{(K)}$ is greater than or equal to $\func(x^{k})$ for some $1 \leq k \leq K$, we have $\func(x^{k}) \leq \func(x^{*}) + \suboptimality$ for some $1 \leq k \leq K$. Therefore, the output point $x'$ satisfies $\func(x') \leq \min_{x \in X} \func(x) + \suboptimality\leq \func(x^{*}) + \suboptimality.$ 
	
	We now prove the bound on the number of queries. The gradient pruning algorithm starts with $\gamma = \pi/2$. As shown in Lemma \ref{supp_lemma:smallergradientcones}, if the there is no new unknown direction, each iteration satisfies $\frac{\sin(\gamma')}{\sin(\gamma)} \leq \sqrt{ \frac{\dimension - |UD| -1}{\dimension - |UD|}} -  \sqrt{ \frac{\dimension -1}{\dimension }}$ where $\gamma'$ is the new value assigned to $\gamma$. After $k$ iterations we have $\sin(\gamma) \leq \sqrt{ \frac{\dimension-1}{\dimension}}^{k} \leq e^{\frac{-k}{2n}}$. Since $\theta = \arcsin(1/(2\sqrt{2}\dimension))$, the gradient pruning algorithm stops after at most $\left \lceil 2n\log(2\sqrt{2}\dimension) \right \rceil$ iterations where we make at most $2\dimension$ queries in each iteration. Note that we can detect at most $\dimension$ unknown directions while running the gradient pruning algorithm. Therefore, the gradient pruning algorithm makes at most $2\dimension\left \lceil 2n\log(2\sqrt{2}\dimension) + \dimension \right \rceil$ queries to the oracle.
	
	The for loop in Algorithm \ref{supp_algo:optimizeC} has $\left \lceil 8\dimension(\dimension + 1 ) \log\left( \frac{ \radiusofset{\convexset} \lipschitz}{\suboptimality}  \right)   \right \rceil$ iterations. Therefore, the total number of queries is at most \[2\dimension\left \lceil 2n\log(2\sqrt{2}\dimension) + \dimension \right \rceil  \left \lceil 8 \dimension (\dimension + 1) \log\left( \frac{ \radiusofset{\convexset} \lipschitz}{\suboptimality}  \right)   \right \rceil \] before the last comparison step. The set $X$ has at most $\left \lceil 8 \dimension (\dimension + 1) \log\left( \frac{ \radiusofset{\convexset} \lipschitz}{\suboptimality}  \right) + 1  \right \rceil$ elements. Finding the smallest function value  requires $\left \lceil 8 \dimension (\dimension + 1) \log\left( \frac{2 \radiusofset{\convexset} \lipschitz}{\suboptimality}  \right)  \right \rceil$ queries to the comparator oracle. Thus, the total number of queries is at most \[ 2\dimension\left \lceil 2n\log(2\sqrt{2}\dimension) + \dimension \right \rceil  \left \lceil 8 \dimension (\dimension + 1) \log\left( \frac{ \radiusofset{\convexset} \lipschitz}{\suboptimality}  \right)   \right \rceil +  \left \lceil 8 \dimension (\dimension + 1) \log\left( \frac{ \radiusofset{\convexset} \lipschitz}{\suboptimality}  \right)  \right \rceil.
	\]
\end{proof}

\subsection{Proof of Theorem \ref{supp_theorem:regret}}
\setcounter{algorithm}{2}

\begin{algorithm}[ht] 
	\caption{The low regret algorithm \textsc{Regret-NV($\timehorizon, \failureprob$)} for the noisy value oracle } \label{supp_algo:regretNV}
	\begin{algorithmic}[1]
		\State  Set $\convexset^{(1)} = \convexset$. Find $\circumscribedellipsoid_{\convexset^{(1)}} = \ellipsoid(A^{(1)}, x^{(1)})$.  Set $X = \lbrace x^{(1)} \rbrace$.
		\State Set $K = \left \lceil 8\dimension(\dimension + 1) \log\left(  2 \radiusofset{\convexset} \lipschitz \timehorizon^{0.25} \right)  \right \rceil$, $\tau = \ceilx{ 32 \standarddev^2  \dimension^4 \log\left( \frac{2}{\failureprob'} \right)}$, $\failureprob' = \frac{\failureprob}{4 \dimension K \log_{16}\left( \frac{15 \timehorizon}{2 \dimension} \right)}$.
		\For{$ k =1, \ldots, K$} \Comment{Phase 1}
		\State Set $d = \frac{\min\left(\sqrt{\eigenvalue_{\max}(A^{(k)})}, 1\right)}{2 \dimension  }$.
		\State Set $\Delta = \frac{d  \left( 2+ \smoothness\eigenvalue_{\max}(A^{(k)}) \right)}{2\eigenvalue_{\max}(A^{(k)})}$.
		\For{$i = 0, 1, \ldots$} \Comment{Case 2}
		\State Set $d_{i} = d/2^{i}$, $\Delta_{i} = \Delta/2^{i}$, $\tau_{i} = 2^{4i} \tau$
		\State Query $\tau_{i}$ times $\oracle{NV}(x^{(k)})$ and $\oracle{NV}(\isotransinverse{A^{(k)}}{ x^{(k)}}(de_{j}))$  for all $i \in [\dimension]$. 
		\State For every query point $x$, set $\hat{\oraclewithoutname}^{NV}(x)$ as the mean of queries for point $x$.
		\State Estimate the gradient $p$ using the mean values $\hat{\oraclewithoutname}^{NV}(x)$.
		\If{$\left( \| p\| > \sqrt{\dimension} \Delta_{i} \right) \wedge \left(\arcsin\left(\frac{\sqrt{\dimension} \Delta_{i}}{\|p\|}\right) \leq \arcsin\left(\frac{1}{2\dimension}\right)\right)$} \Comment{Case 1}
		\State Set $\convexset^{(k+1)} = \convexset^{(k)} \cap  \ellipsoid(A^{(k)}, x^{(k)}) \cap \isotransinverse{A}{x^{(k)}} \left(\curlyset{ x | \innerproduct{p / \norm{p}}{ x } \leq 1/(2\dimension) }\right)$.
		\State Find $\circumscribedellipsoid_{\convexset^{(k+1)}} = \ellipsoid(A^{(k+1)}, x^{(k+1)})$. 
		\State Set $X = X \cup \lbrace x^{(k+1)} \rbrace$.
		\State \textbf{break}
		\EndIf
		\EndFor
		\EndFor
		\State For all $x\in X$, query $\oracle{NV}(x)$, $\ceilx{32 \standarddev^2 \sqrt{\timehorizon} \log\left( \frac{2(K+1)}{\failureprob}\right) }$ times. Set $x'$ to the point with the highest empirical mean.  \Comment{Phase 2}
		\State Repeatedly query $\oracle{NV}(x')$. \Comment{Phase 3}
	\end{algorithmic}
	
\end{algorithm}

\begin{theorem} \label{supp_theorem:regret}
 Let $K = \left \lceil 8\dimension(\dimension + 1) \log\left(  2 \radiusofset{\convexset} \lipschitz \timehorizon^{0.25} \right)  \right \rceil$, $\failureprob' = \failureprob/ \left( 4 \dimension K \log_{16}\left( \frac{15 \timehorizon}{2 \dimension} \right) \right)$, and $\tau = \ceilx{ 8\standarddev^2 \smoothness^2 \dimension^4 \log\left( \frac{2}{\failureprob'} \right)}$. For an $\lipschitz$-Lipschitz, $\smoothness$-smooth, convex function $\func: \convexset \to \mathbb{R}$, a given failure probability  $\failureprob >0$, and a time horizon $\timehorizon$, Algorithm \ref{supp_algo:regretNV} has a regret of at most $ K\left(\radiusofset{\convexset} \lipschitz \tau   + 5  \timehorizon^{0.75}  \dimension^{-0.25}\max\left( \dimension \radiusofset{\convexset}, 1 \right)(1+\smoothness)   \tau^{0.25} \right) + 
 (K+1) \ceilx{32 \standarddev^2 \sqrt{\timehorizon} \log\left( \frac{2(K+1)}{\failureprob}\right) } \radiusofset{\convexset} \lipschitz + \timehorizon^{0.75}$
 with probability at least $1-\failureprob$.
\end{theorem}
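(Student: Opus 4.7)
The plan is to decompose the regret into contributions from Phase 1, Phase 2, and Phase 3, and to analyze all three on a single high-probability event. First I would define $\mathcal{E}$ as the event on which every empirical mean $\hat{\oraclewithoutname}^{NV}(x)$ computed during the algorithm lies within $\standarddev\sqrt{2\log(2/\failureprob')/\tau_i}$ of the true value $\func(x)$. Each such deviation fails with probability at most $\failureprob'$ by the subgaussian tail bound, and a union bound over the at most $\dimension K \log_{16}(15\timehorizon/(2\dimension))$ Phase 1 query sets, together with the $K+1$ additional Phase 2 concentrations, yields $\Pr(\mathcal{E}) \geq 1-\failureprob$ with the given choices of $\failureprob'$ and $\tau$. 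All subsequent arguments are conditional on $\mathcal{E}$.

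Next I would analyze a single inner iteration. Writing $\funciso = \func \circ \isotransinverse{A^{(k)}}{x^{(k)}}$, the $\smoothness$-smoothness in the original coordinates together with concentration implies that the empirical gradient estimate $p$ satisfies $\|p - \nabla \funciso(0)\|_{\infty} \leq \Delta_i$, since $\Delta_i$ is precisely the sum of the confidence half-width and the $\smoothness d_i /(2\eigenvalue_{\max}(A^{(k)}))$ smoothness error. Hence $\norm{p - \nabla \funciso(0)} \leq \sqrt{\dimension}\Delta_i$. If Case 1 triggers, then $\nabla \funciso(0) \in \cone(p, \arcsin(1/(2\dimension)))$, so Lemma \ref{supp_lemma:ellipsoid} yields a valid ellipsoid cut preserving an optimizer and reducing volume by at least $e^{-1/(8(\dimension+1))}$. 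If Case 1 fails at level $i$, then the failure condition combined with the gradient bound forces $\norm{\nabla \funciso(0)} = \mathcal{O}(\dimension^{3/2}\Delta_i)$; since the isotropic radius of $\convexset^{(k)}$ is at most $\dimension \radiusofset{\convexset}$ by Lemma \ref{supp_lemma:ratioofellipsoidandcircle}, convexity gives $\func(x^{(k)}) - \func(x^{*}) = \mathcal{O}(\dimension^{5/2}\Delta_i \radiusofset{\convexset})$.

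Third I would bound the Phase 1 regret. Each query at inner level $i$ of outer iteration $k$ incurs instantaneous regret at most $\func(x^{(k)}) - \func(x^{*}) + \lipschitz d_i$ by Lipschitzness, since the query points lie within $d_i$ of $x^{(k)}$. The initial level $i = 0$ is charged with the crude bound $\radiusofset{\convexset}\lipschitz$ per query times $(\dimension+1)\tau$ queries, contributing the $\radiusofset{\convexset}\lipschitz \tau$ term. For each subsequent level $i \geq 1$, the fact that Case 1 failed at level $i - 1$ enforces the finer gradient-magnitude bound from step two, so the regret-per-query is $\mathcal{O}(2^{-i}\max(\dimension \radiusofset{\convexset},1)(1+\smoothness))$; combined with $\tau_i = 2^{4i}\tau$ queries, per-level Phase 1 regret scales as $2^{3i}\tau$. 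The terminal depth $i^{*}(k)$ is controlled by the global query budget $\sum_{k}(\dimension+1)\tau_{i^{*}(k)} \leq \timehorizon$, yielding $2^{i^{*}(k)} = \mathcal{O}((\timehorizon/(K\tau))^{1/4})$; summing the geometric regret series is dominated by the terminal level and produces the claimed $5\timehorizon^{0.75}\dimension^{-0.25}\max(\dimension \radiusofset{\convexset},1)(1+\smoothness)\tau^{0.25}$ per $k$.

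Finally, Phases 2 and 3 are handled together. The $\ceilx{32\standarddev^2\sqrt{\timehorizon}\log(2(K+1)/\failureprob)}$ samples per center concentrate their means within $\timehorizon^{-1/4}$ of the truth on $\mathcal{E}$, so the selected $x'$ satisfies $\func(x') \leq \min_{x \in X}\func(x) + 2\timehorizon^{-1/4}$. Because $K$ is chosen exactly so that Lemma \ref{supp_lemma:ellipsoid} drives the volume of $\convexset^{(K+1)}$ below the volume of the $\timehorizon^{-1/4}$-sublevel set, some $x^{(k)} \in X$ has suboptimality at most $\timehorizon^{-1/4}$, hence $\func(x') - \func(x^{*}) = \mathcal{O}(\timehorizon^{-1/4})$. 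Phase 2 contributes $(K+1)\tau_2 \radiusofset{\convexset}\lipschitz$ (the crude Lipschitz bound on each exploration query), and Phase 3's at most $\timehorizon$ queries at $x'$ contribute $\mathcal{O}(\timehorizon \cdot \timehorizon^{-1/4}) = \mathcal{O}(\timehorizon^{0.75})$. The principal obstacle is the bookkeeping of step three: the inner-loop depth is data-dependent and may be large at some iterations, so the Phase 1 regret must be charged against the shared query budget rather than counted naively, which is precisely what the geometric schedule $\tau_i = 2^{4i}\tau$ versus $2^{3i}$ regret growth is designed to absorb.
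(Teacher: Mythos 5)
Your decomposition into three phases, the concentration event, the Case~1/Case~2 dichotomy, and the geometric summation all mirror the paper's argument closely. However there is a concrete error in how you bound the terminal depth $i^{*}(k)$ of the inner loop. You write that the global budget constraint $\sum_{k}(\dimension+1)\tau_{i^{*}(k)} \leq \timehorizon$ yields $2^{i^{*}(k)} = \mathcal{O}\bigl((\timehorizon/(K\tau))^{1/4}\bigr)$ for each $k$. That deduction does not follow: a bound on the \emph{sum} of $2^{4 i^{*}(k)}$ over $k$ does not give a uniform per-$k$ bound with a $1/K$ factor, since a single outer iteration could consume essentially the whole budget. Moreover your stated bound, if it were valid, would yield a per-$k$ Phase~1 regret of order $\timehorizon^{3/4}K^{-3/4}\tau^{1/4}$, which is strictly \emph{smaller} than the $\timehorizon^{3/4}\dimension^{-3/4}\tau^{1/4}$ term you then claim ``is produced''---so your own chain of reasoning is internally inconsistent. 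The paper instead bounds the depth of \emph{any single} inner loop by $\log_{16}(15\timehorizon/(2\dimension\tau))$, using only the observation that the queries of one inner loop alone cannot exceed $\timehorizon$; this gives $2^{i^{*}(k)} \leq (15\timehorizon/(2\dimension\tau))^{1/4}$ uniformly in $k$, and then multiplies the resulting per-$k$ bound by $K$. Replacing your $(\timehorizon/(K\tau))^{1/4}$ by $(\timehorizon/(\dimension\tau))^{1/4}$ makes the per-$k$ geometric sum come out to the $5\,\timehorizon^{0.75}\dimension^{-0.25}\max(\dimension\radiusofset{\convexset},1)(1+\smoothness)\tau^{0.25}$ form in the theorem.

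A smaller point: in Phase~2 you get $\func(x') \leq \min_{x\in X}\func(x) + 2\timehorizon^{-1/4}$, which combined with the sub-level-set volume argument gives a Phase~3 suboptimality of order $3\timehorizon^{-1/4}$ rather than the $\timehorizon^{-1/4}$ needed for the stated $\timehorizon^{0.75}$ Phase~3 term. The sample size $\ceilx{32\standarddev^2\sqrt{\timehorizon}\log(2(K+1)/\failureprob)}$ in fact gives each empirical mean a half-width of $\timehorizon^{-1/4}/4$, and the volume argument (with $K$ chosen so the circumscribing ellipsoid of $\convexset^{(K)}$ has volume below $\volumeofunitball(\timehorizon^{-1/4}/(2\lipschitz))^{\dimension}$) supplies an $X$-member with suboptimality $\timehorizon^{-1/4}/2$; together these give $\func(x') - \func(x^{*}) \leq \timehorizon^{-1/4}$, which is the bound the Phase~3 term requires. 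These constants need to be tracked to match the theorem as stated.
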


\begin{proof}[Proof of Theorem \ref{supp_theorem:regret}]
	We first show that all queries are feasible. We note that during Phase $1$, all queries have distance at most $ \sqrt{\eigenvalue_{\max}(A^{(k)})}/(2 \dimension)$ from the origin in the isotropic coordinates where $\sqrt{\eigenvalue_{\max}(A^{(k)})}$ is the radius of the current convex set in the isotropic coordinates. By Lemma \ref{supp_lemma:feasibleinnerball}, all queries in Phase $1$ are feasible. The query points in Phases $2$ and $3$ are ellipsoid centers which are feasible due to Lemma \ref{supp_lemma:feasibleinnerball}.
	
	We analyze the regret induced by the inner for loop. Let $D$ be the current convex set such that $\circumscribedellipsoid_{D} = \ellipsoid(A, x)$.
	We first show that the gradient estimate estimation is accurate with high probability in the isotropic coordinates. Since the isotropic transformation can only stretch the coordinates, the function in the isotropic coordinates is also $\smoothness$-smooth and $\lipschitz$-Lipschitz. We have  
\begin{equation} \label{supp_eq:gradientest2}
\left\lvert \frac{\left( \func \circ \isotransinverse{A}{x} \right) (0) - \left( \func \circ \isotransinverse{A}{x} \right) (d_{i} e_{j})}{d_{i}}  -  \frac{\innerproduct{\nabla \left( \func \circ \isotransinverse{A}{x} \right) (0)}{d_{i} e_{j}}}{d_{i}}   \right\lvert \leq \frac{\smoothness d_{i}}{2}  
\end{equation}
	due to $\smoothness$-smoothness.

	At the $i$-th iteration of the inner for loop, 	the directional derivative estimate in direction $e_{j}$ is  $p_{j} = \frac{\hat{\oraclewithoutname}^{NV}(\isotransinverse{A}{ x}(d_{i}e_{j})) -  \hat{\oraclewithoutname}^{NV}(x)}{d_{i}}$. We have 
	\begin{subequations}
		\begin{align}
		&\Pr\left(\left \lvert \frac{\hat{\oraclewithoutname}^{NV}(\isotransinverse{A}{ x}(d_{i}e_{j})) -  \hat{\oraclewithoutname}^{NV}(x)}{d_{i}} - \frac{\left( \func \circ \isotransinverse{A}{x} \right) (0) - \left( \func \circ \isotransinverse{A}{x} \right) (d_{i} e_{j})}{d_{i}} \right \rvert > \frac{d_{i}}{\min \left(\eigenvalue_{\max}(A), 1 \right)} \right) 
		\\
		& \quad \leq 2 \exp\left( - \frac{d_{i}^{4}  \tau_{i}}{2 \standarddev^2  \min \left(\eigenvalue_{\max}(A), 1 \right)^2 }\right) \leq \delta'
		\end{align}
	\end{subequations}
	for each direction $e_{j}$. Using this in \eqref{supp_eq:gradientest2}, the directional derivative estimate satisfies $\left \lvert p_{j}  - \innerproduct{\nabla \left( \func \circ \isotransinverse{A}{x} \right) (0)}{e_{j}}\right \rvert \leq \Delta_{i}$ at point $0$ with probability at least $1-\delta'$. Consequently, we have $\norm{\nabla \left( \func \circ \isotransinverse{A}{x} \right) (0) - p} \leq \sqrt{\dimension} \Delta_{i} $ with probability at least $1-2n\delta'.$ 
	
	If Case $2$ happens, then we have $ \norm{\nabla \left( \func \circ \isotransinverse{A}{x} \right) (0) } < (2\dimension + 1)\sqrt{\dimension} \Delta_{i}$ since $ \norm{p} < 2\dimension\sqrt{\dimension} \smoothness \Delta_{i}$. Since $\isotrans{A}{x}\func$ is $\smoothness$-smooth, the norm of the gradient is smaller than $ (2\dimension + 1)\sqrt{\dimension}  \Delta_{i} + \smoothness d_{i}$ for every query point.
	
	If Case $1$ happens then $\nabla \left( \func \circ \isotransinverse{A}{x} \right) (0) \in \cone(p, \arcsin(1/(2\dimension)))$, and the ellipsoid algorithm proceeds successfully. Note that the elliposid cuts happen only when Case $1$ happens. Since every discarded point $y$ satisfies $\func(y) \geq \func(x)$, the set $D$ always contains a minimizer $x^{*}$.

	We first show that Case $2$ can happen at most $\log_{16}\left( \frac{15 \timehorizon}{2 \dimension \tau } \right)$ times. In the $i$th iteration of the inner for loop, we make $2^{4i} \tau$ queries for two points in every dimension. Let $W$ be the number of iterations of the inner for loop.  We have 
	\begin{subequations}
		\begin{align}
		\timehorizon &= \sum_{i=0}^{W} 2^{4i} 2 \dimension \tau
		\\
		&=  \frac{(16^{W+1} - 1) 2\dimension\tau}{15}.
		\end{align}
	\end{subequations}
	By rearranging the terms, we get $W = \log_{16}\frac{15 \timehorizon}{2\dimension \tau} - 1$. Therefore, the maximum value of $i$ is $\log_{16}\left( \frac{15 \timehorizon}{2 \dimension \tau } \right)$.
	
	For each iteration of inner for loop the probability of failure is less than or equal to $2\dimension\failureprob'$. Since the maximum value of $i$ is $\log_{16}\left( \frac{15 \timehorizon}{2 \dimension \tau } \right)$, the total probability of failure is less than or equal to $2\dimension\failureprob'\log_{16}\left( \frac{15 \timehorizon}{2 \dimension \tau } \right)$.
	
	We now bound the regret for each iteration of the inner loop assuming that the gradient estimation did not fail. If $i=0$, then the regret of each query is $\radiusofset{D} \lipschitz$ since there exists a minimizer $x^{*} \in D$. If $i>0$, then $\norm{\nabla \left( \func \circ \isotransinverse{A}{x} \right) (0)} < 2(2\dimension + 1)\sqrt{\dimension}\Delta_{i}$ since Case $1$ did not happen in iteration $i-1$. Due to $\smoothness$-smoothness, the norm of gradient at the query points is smaller than $2(2\dimension + 1)\sqrt{\dimension}\Delta_{i} \Delta_{i} + \smoothness d_{i}$ in isotropic coordinates. The regret of each query is smaller than $ \sqrt{\eigenvalue_{\max}(A)} \dimension (4\dimension\sqrt{\dimension} \Delta_{i} + \smoothness d_{i})$ since $D$ contains a minimizer $x^{*}$ and the radius of $D$ is $\sqrt{\eigenvalue_{\max}(A)}$ in the isotropic coordinates. The total regret induced by the inner for loop is less than or equal to
	\begin{subequations}
		\begin{align}
		&\radiusofset{D} \lipschitz \tau + \sum_{i=1}^{\log_{16}\left( \frac{15 \timehorizon}{2 \dimension \tau } \right)} \sqrt{\eigenvalue_{\max}(A)}  (2(2\dimension + 1)\sqrt{\dimension} \Delta_{i} + \smoothness d_{i}) \tau_{i}
		\\
		& \quad = \radiusofset{D} \lipschitz \tau + \sum_{i=1}^{\log_{16}\left( \frac{15 \timehorizon}{2 \dimension \tau } \right)} 2^{3i}\sqrt{\eigenvalue_{\max}(A)} (2(2\dimension + 1)\dimension\sqrt{\dimension} \Delta + \smoothness d)  \tau
		\\
		&\quad = \radiusofset{D} \lipschitz \tau + \frac{8^{\log_{16}\left( \frac{15 \timehorizon}{2 \dimension \tau } \right)}  - 8}{7}\sqrt{\eigenvalue_{\max}(A)}(2(2\dimension + 1)\sqrt{\dimension} \Delta + \smoothness d) \tau
		\\
		&\quad = \radiusofset{D} \lipschitz \tau  + \frac{15^{3/4} \timehorizon^{3/4}}{7 (2^{3/4} \dimension^{3/4} \tau^{3/4})}\sqrt{\eigenvalue_{\max}(A)}2(2\dimension + 1)\sqrt{\dimension} \Delta + \smoothness d) \tau
		\\
		&\quad \leq \radiusofset{D} \lipschitz \tau  + \timehorizon^{3/4}\sqrt{\eigenvalue_{\max}(A)} \dimension^{-3/4} (2(2\dimension + 1)\sqrt{\dimension} \Delta + \smoothness d) \tau^{1/4} \label{supp_ineq:15power}
		\\
		&\quad = \radiusofset{D} \lipschitz \tau  + \timehorizon^{3/4} \sqrt{\eigenvalue_{\max}(A)} \dimension^{-3/4} 
		\\
		& \quad \quad \left(2\left(2\dimension + 1\right)\sqrt{\dimension} \frac{2+ \smoothness \min\left(\eigenvalue_{\max}(A),1 \right)}{2\dimension \sqrt{ \min\left(\eigenvalue_{\max}(A),1 \right)}} +  \frac{ \smoothness \sqrt{ \min\left(\eigenvalue_{\max}(A),1 \right)}}{\dimension} \right)  \tau^{1/4}
		\\
		&\quad \leq \radiusofset{D} \lipschitz \tau  + \timehorizon^{3/4} \sqrt{\eigenvalue_{\max}(A)} \dimension^{-3/4} \left(2\left(2\dimension + 1\right)\sqrt{\dimension} \frac{1+ \smoothness \min\left(\eigenvalue_{\max}(A),1 \right)}{\dimension \sqrt{ \min\left(\eigenvalue_{\max}(A),1 \right)}} \right)  \tau^{1/4} \label{supp_ineq:2nplus1sqrtn} 
		\\
		&\quad \leq \radiusofset{D} \lipschitz \tau  + 5\timehorizon^{3/4} \sqrt{\eigenvalue_{\max}(A)} \dimension^{-1/4} \left( \frac{1+ \smoothness \min\left(\eigenvalue_{\max}(A),1 \right)}{\sqrt{ \min\left(\eigenvalue_{\max}(A),1 \right)}} \right)  \tau^{1/4}  \label{supp_ineq:2times2nplus1lessthan6n} 
		\\
		&\quad \leq \radiusofset{D} \lipschitz \tau   + 5  \timehorizon^{3/4}  \dimension^{-1/4}\max\left( \sqrt{\eigenvalue_{\max}(A)}, 1 \right)(1+\smoothness)   \tau^{1/4} 
		\\
		&\quad \leq \radiusofset{D} \lipschitz \tau   + 5  \timehorizon^{3/4}  \dimension^{-1/4}\max\left( \dimension \radiusofset{D}, 1 \right)(1+\smoothness)   \tau^{1/4}
		\\
		&\quad \leq \radiusofset{\convexset} \lipschitz \tau   + 5  \timehorizon^{3/4}  \dimension^{-1/4}\max\left( \dimension \radiusofset{\convexset}, 1 \right)(1+\smoothness)   \tau^{1/4} \label{supp_ineq:radiusDsmallerthanradiusC}
		\end{align}
	\end{subequations}
	where \eqref{supp_ineq:15power} is due to $(15/2)^{3/4}/7 \leq 1$,  \eqref{supp_ineq:2nplus1sqrtn} is due to $(2\dimension+1)\sqrt{\dimension} +1 \leq (2\dimension+1)\sqrt{\dimension}$, and \eqref{supp_ineq:2times2nplus1lessthan6n} is due to $2(2\dimension + 1) \leq 5\dimension$. Inequality \eqref{supp_ineq:radiusDsmallerthanradiusC} follows from $\eigenvalue_{\max}(A) \leq \dimension\radiusofset{D}$  for the convex set $D$ by Lemma \ref{supp_lemma:ratioofellipsoidandcircle} and $\radiusofset{D} \leq \radiusofset{C}$.

	Since the outer for loop repeats at most $K$ times, the total regret incurred during Phase $1$ is at most $K\left(\radiusofset{\convexset} \lipschitz \tau   + 5  \timehorizon^{3/4}  \dimension^{-1/4}\max\left( \dimension \radiusofset{\convexset}, 1 \right)(1+\smoothness)   \tau^{1/4} \right)$ with probability at least $1 - 2\dimension K \failureprob'\log_{16}\left( \frac{15 \timehorizon}{2 \dimension \tau } \right) = 1- \failureprob \log_{16}\left( \frac{15 \timehorizon}{2 \dimension \tau } \right) / \left( 2 \log_{16}\left( \frac{15 \timehorizon}{2 \dimension} \right)\right) $. Since $\tau \geq 1$, we have the probability of failure is less than $1-\failureprob/2$.
	
	If Case $1$ happens in the $k$th iteration of the outer loop, then $C^{(k)} \setminus C^{(k+1)}$ only includes the ascent points by Lemma \ref{supp_lemma:ellipsoid}.  Since $\func$ is $\lipschitz$-Lipschitz the volume of the set $\curlyset{x | x \in \convexset, \func(x) \leq \func(x^{*}) + \suboptimality}$ is at least $\volumeofunitball \left( \frac{\suboptimality}{ \lipschitz}\right)^{\dimension}$. If the iteration $K$ happens, then due to Lemma \ref{supp_lemma:ellipsoid}, we have $Vol\left(\circumscribedellipsoid_{\convexset^{(K)}} \right)< \volumeofunitball \left( \frac{\suboptimality}{ \lipschitz}\right)^{\dimension}$. Therefore, there exists a point $x$ such that $x \not \in \convexset^{(K)}$ and $\func(x) \leq \func(x^{*}) + \timehorizon^{-1/4}/2$. 
	
	Since the function value of every discarded point in $\convexset \setminus \convexset^{(K)}$ is greater than or equal to $\func(x^{k})$ for some $1 \leq k \leq K$, we have $\func(x^{k}) \leq \func(x^{*}) + \timehorizon^{-1/4}/2$ for some $1 \leq k \leq K$.
	
	By the Hoeffding's inequality, the point $x'$ with the highest empirical mean satisfies $\func(x^{k}) \leq \func(x^{*}) + \timehorizon^{-1/4}$ with probability at least $1-\delta/2$. 
	
	Since set $X$ has $K+1$ elements, the regret incurred during Phase $2$ is at most
	\begin{subequations}
		\begin{align}
		(K+1) \ceilx{32 \standarddev^2 \sqrt{\timehorizon} \log\left( \frac{2(K+1)}{\failureprob}\right) } \radiusofset{\convexset} \lipschitz.
		\end{align}
	\end{subequations}  
	
	Since the output point $x'$ satisfies $\func(x^{k}) \leq \func(x^{*}) + \timehorizon^{-1/4}$. Therefore, the regret incurred during Phase $3$ is at most $\timehorizon^{3/4}$. 
	
	Therefore, the regret of Algorithm \ref{supp_algo:regretNV} is at most $
	K\left(\radiusofset{\convexset} \lipschitz \tau   + 5  \timehorizon^{3/4}  \dimension^{-1/4}\max\left( \dimension \radiusofset{\convexset}, 1 \right)(1+\smoothness)   \tau^{1/4} \right) + 
	(K+1) \ceilx{32 \standarddev^2 \sqrt{\timehorizon} \log\left( \frac{2(K+1)}{\failureprob}\right) } \radiusofset{\convexset} \lipschitz + \timehorizon^{3/4}
	$
	with probability at least $1-\failureprob$.
	
\end{proof}

\end{document}